\documentclass[11pt,a4paper]{article}
\usepackage[margin=2.6cm]{geometry}
\usepackage{setspace}
\usepackage{amsmath}
\usepackage{amsfonts}
\usepackage{amssymb}
\usepackage{amsthm}
\usepackage{mathtools}
\usepackage{bm}

\usepackage{graphicx}
\usepackage{float}
\usepackage{subcaption}
\usepackage[export]{adjustbox}
\usepackage{tikz}
\usetikzlibrary{arrows,arrows.meta,shapes,patterns,positioning,calc}

\usepackage{booktabs}
\usepackage{tabularx}
\usepackage{array}

\usepackage{xcolor}
\usepackage[colorlinks=true,linkcolor=blue!50!black,
            citecolor=blue!50!black,urlcolor=blue!50!black]{hyperref}
\usepackage[capitalise,noabbrev]{cleveref}

\providecommand{\doi}[1]{DOI:~\href{https://doi.org/#1}{\nolinkurl{#1}}}

\theoremstyle{plain}
\newtheorem{theorem}{Theorem}
\newtheorem{proposition}{Proposition}
\newtheorem{lemma}{Lemma}
\newtheorem{corollary}{Corollary}
\theoremstyle{definition}
\newtheorem{definition}{Definition}
\newtheorem{example}{Example}
\theoremstyle{remark}
\newtheorem{remark}{Remark}

\newcommand{\R}{\mathbb{R}}

\newcommand{\bx}{\mathbf{x}}

\newcommand{\bu}{\mathbf{u}}
\newcommand{\diff}{\,\mathrm{d}}
\DeclareMathOperator*{\sgn}{sgn}
\newcommand{\fprod}{f_i^{\,\mathrm{prod}}}
\newcommand{\fwgtbc}{f_i^{\,\mathrm{wgt,bc}}}
\newcommand{\fwgt}{f_i^{\,\mathrm{wgt}}}
\newcommand{\fS}{f_i^{\,\mathrm{S}}}
\newcommand{\Acal}{\mathcal{A}}
\newcommand{\Rcal}{\mathcal{R}}
\newcommand{\half}{\tfrac{1}{2}}

\newcommand{\eqbox}[1]{\boxed{\;\;#1\;\;}}

\title{\bfseries A Bias-Corrected Weighted Logistic Model for Gene
       Regulatory Networks: Functional Equivalence with the
       Product-of-Logistics and Comparison with Weighted-Sum
       Formulations}

\author{%
  \textsc{Ismail Belgacem}\thanks{Corresponding author: \texttt{ismail.belgacem.81@gmail.com}.}
  \\[2pt]
  \small Independent Researcher,\\
  \small Mezaourou, Ghazaouet, Tlemcen, Algeria
}
\date{}

\begin{document}

\maketitle

\begin{abstract}
We introduce a \emph{bias-corrected weighted-logistic} (bcw)
formulation for ODE models of gene regulatory networks (GRNs). Each
gene's regulatory function is a single sigmoid
$\sigma(\lambda S_i(\bx) + \lambda b_i)$ of the signed weighted
regulator sum $S_i$, with a per-gene bias
$b_i = -\lambda^{-1}\log(2^{m_i}-1)$ depending only on the in-degree
$m_i$ and the steepness $\lambda$. We prove that this bias is the
unique single-sigmoid correction that recovers the critical-point
value $1/2^{m_i}$ of the product-of-logistics formulation and shares
its canonical equilibrium $x_i^{*} = \kappa_i/(\gamma_i\,2^{m_i})$
under self-consistency. The framework yields: an exact algebraic
discrepancy identity expressing $\fprod - \fwgtbc$ as a finite sum
over the proper non-empty subsets of the regulator index set; a
slope-ratio formula $2-2^{1-m_i}$ at the critical point; a curvature
mismatch showing the structural symmetry $\sigma''=0$ is preserved by
the product but broken by the bcw form for $m_i\ge 2$; a no-go result
ruling out global equivalence by any state-independent constant; and
a mean-field interpretation of the bias as a zero-order log-sum-exp
approximation. At a shared equilibrium the Jacobians are related by
the explicit row-wise transformation $J^{\mathrm{bcw}} = D(J^{\mathrm{prod}}
+\Gamma) - \Gamma$ with $D = \mathrm{diag}(2-2^{1-m_i})$, yielding a
stability dichotomy that classifies which spectral modes the bias
correction destabilises or further stabilises. In the contractive
regime $\gamma_i > \kappa_i\lambda m_i/4$ both formulations converge
globally and exponentially to the shared equilibrium at the explicit
rate $\alpha = \min_i(\gamma_i - \kappa_i\lambda m_i/4)$. A canonical
two-gene self-activating negative-feedback motif illustrates the
theory: the bcw formulation undergoes Hopf bifurcation at
$\lambda^{\mathrm{bcw}}_{\mathrm{Hopf}} = \tfrac{2}{3}\,
\lambda^{\mathrm{prod}}_{\mathrm{Hopf}}$ and sustains limit-cycle
oscillations around the shared equilibrium in a parameter regime
where the product-of-logistics remains linearly stable, with the
predicted eigenvalue amplification factor $D = 3/2$.
\end{abstract}

\medskip

\noindent\textbf{Key words.}
gene regulatory networks, logistic functions, single-sigmoid models,
bias-corrected weighted formulation, functional equivalence,
equilibrium correspondence, eigenvalue map, Hopf bifurcation, stability dichotomy

\medskip

\bigskip

\section{Introduction}
\label{sec:introduction}

Sigmoidal regulatory functions are the workhorse of mathematical models
of gene regulatory networks (GRNs). For decades, the Hill family
$h^+(x,\theta,n) = x^n/(\theta^n+x^n)$ and
$h^-(x,\theta,n) = \theta^n/(\theta^n+x^n)$ has supplied the standard
parameterisation, encoding cooperativity through the Hill coefficient
$n$ and threshold sensitivity through the dissociation constant
$\theta$~\cite{santillan2008use,bintu2005transcriptional}. The
mechanistic origin of Hill functions in equilibrium binding theory
generalises the Michaelis--Menten kinetics that govern enzymatic chains
and, more broadly, the global-stability analyses developed
in~\cite{belgacem2013michaelis,belgacem2012michaelis} for full
reversible Michaelis--Menten reactions. Despite the long-standing
success of Hill functions, they develop derivative singularities at
the origin whenever $n$ is non-integer
($h^{+\prime}(x)\to\infty$ as $x\to 0^+$ for $0<n<1$, with
higher-order derivatives diverging on $n\in(k,k+1)$), lack
closed-form antiderivatives and inverses, and admit only a
logarithmically divergent small-$n$ expansion~\cite{santillan2008use,
hernandez2023corrected,abramowitz1965handbook}. These pathologies
compromise numerical integration in low-expression regimes,
ill-condition Fisher information matrices for parameter estimation, and
prevent reliable observability analysis in feedback control
applications.

A growing body of work has therefore proposed replacing Hill functions
by their logistic counterparts
$f^+(x,\theta,\lambda) = 1/(1+e^{-\lambda(x-\theta)})$ and
$f^-(x,\theta,\lambda) = 1/(1+e^{\lambda(x-\theta)})$, with the
parameter matching $\lambda = n/\theta$ preserving local input--output
sensitivity at the half-maximal concentration~\cite{belgacem2025exploring,
belgacem2026logistic}. The logistic functions are globally
$C^\infty$, possess closed-form derivatives with the self-referential
structure $\partial f^\pm/\partial x = \pm\lambda f^\pm(1-f^\pm)$,
admit elementary antiderivatives $\int f^+\,\diff x =
\lambda^{-1}\ln(1+e^{\lambda(x-\theta)})$, are inverted in closed form
through the logit transformation, and are uniformly Lipschitz with
constant $\lambda/4$. The logistic formulation also captures basal
expression naturally through $f^+(0,\theta,\lambda) = 1/(1+e^{\lambda
\theta})>0$, without ad hoc additive offsets.

\subsection{Two competing logistic-based GRN frameworks}
\label{subsec:competing}

Two distinct logistic-based GRN formulations have emerged in the
literature.

\paragraph*{Product-of-logistics models} treat each regulatory
interaction as an independent sigmoid factor and combine factors
multiplicatively to capture parallel (AND-like) regulation, with each
regulator carrying its own threshold $\theta_{i,m}$ and the directional
character ($+$ for activator, $-$ for repressor) encoded inside the
exponent. This formulation, developed
in~\cite{belgacem2025exploring,belgacem2026logistic} and related
works on stability and reduction of gene-expression
models~\cite{belgacem2018reduction,belgacem2014mathematical,
belgacem2014stability,belgacem2013stability,belgacem2013analysis},
is the smooth analytic counterpart of the logical step-function GRN
networks introduced by Glass and
Kauffman~\cite{glass1973logical}, whose high-dimensional periodicity,
chaos, and bifurcation structure on ring circuits have been further
analysed in~\cite{belgacem2025glass,farcot2019chaos}. It preserves a
transparent biological interpretation: each $\theta_{i,m}$ maps to a
measurable binding affinity $K_d$ or half-maximal effective
concentration $\mathrm{EC}_{50}$, and the value of the regulatory
function at the critical point ($x_{j(i,m)} = \theta_{i,m}$ for every
$m$) equals $(1/2)^{m_i}$, the natural Bernoulli-product probability.
The explicit form is introduced as \eqref{eq:f_prod_def} in
\Cref{subsec:product_baseline}, with an equivalent sign-encoded compact
form $\fprod = \prod_m\sigma(\lambda\,\epsilon_{i,m}(x_{j(i,m)}-\theta_{i,m}))$
recorded in \Cref{rem:prod_compact}.

\paragraph*{Weighted-sum models}
~\cite{samuilik2022mathematical,kozlovska2022models,sadyrbaev2021modelling,
kozlovska2023quasi,samuilik2022genetic,sadyrbaev2023coexistence,
somathilaka2023revealing,vohradsky2001neural}
aggregate all regulatory inputs into a single weighted sum passed
through one increasing logistic per gene:
\begin{equation}
\dot{x}_i = \kappa_i\,
   \frac{1}{1 + e^{-\mu_i\!\bigl(\sum_{j=1}^n w_{ij}^{\mathrm{S}}x_j
                                                  - \theta_i^{\mathrm{S}}\bigr)}}
   - \gamma_i x_i,
\qquad w_{ij}^{\mathrm{S}}\in\R,
\label{eq:samuilik_intro}
\end{equation}
with regulatory direction encoded by the sign of $w_{ij}^{\mathrm{S}}$.
This formulation reduces the number of nonlinear terms to one per
gene, an attractive property for bifurcation analysis, parameter
estimation, and feedback-control synthesis. However, the
negative-weight encoding creates structural pathologies. The midpoint
of the increasing sigmoid satisfies $\sum_j w_{ij}^{\mathrm{S}}x_j =
\theta_i^{\mathrm{S}}$; for a single repressor with $w^{\mathrm{S}}<0$,
$\theta^{\mathrm{S}}>0$, this gives a midpoint $x_c =
\theta^{\mathrm{S}}/w^{\mathrm{S}}<0$ outside the biologically
admissible domain $x\ge 0$, so the sigmoid never undergoes its
sigmoidal transition over physical concentrations
(\Cref{subsec:repression_pathology}). The shared threshold prescription
$\theta_i^{\mathrm{S}} = \tfrac{1}{2}\sum_j w_{ij}^{\mathrm{S}}$
commonly adopted
in~\cite{samuilik2022mathematical,kozlovska2022models} produces
$\theta_i^{\mathrm{S}} = 0$ for any AND-gate combining one activator
and one repressor, leaving no characteristic biological scale to
identify parameters against.

\subsection{The bridge: a bias-corrected weighted formulation}
\label{subsec:bridge}

This paper develops a third logistic-based formulation that occupies
the middle ground between~\eqref{eq:product_of_logistics_intro} and
\eqref{eq:samuilik_intro}: a single sigmoid per gene with a
\emph{regulator-specific} normalisation and a \emph{combinatorial} bias
term. It directly follows and complements two recent preprints by
Belgacem~\cite{belgacem2025exploring,belgacem2026logistic}, which form
a two-stage program of work. The first
preprint~\cite{belgacem2025exploring} introduced and rigorously
catalogued the analytical advantages of replacing Hill functions
$h^\pm(x,\theta,n)$ by logistic counterparts
$f^\pm(x,\theta,\lambda)$ under the matching $\lambda = n/\theta$:
infinite differentiability, uniform Lipschitz bound $|\partial
f^\pm/\partial x|\le\lambda/4$, elementary inverses through the logit
transformation, and a non-zero basal output $f^+(0,\theta,\lambda) =
1/(1+e^{\lambda\theta}) > 0$. It also established global existence,
smoothness, and uniform boundedness for the product-of-logistics GRN
model with the explicit Lipschitz constant
$L_F\le M = \max_i(\kappa_i\sum_j L_i^j + \gamma_i)$
\cite[Thm.~1]{belgacem2025exploring} and discussed the structural
shortcomings of weighted-sum formulations relative to the product
\cite[\S 8.3]{belgacem2025exploring}. The second
preprint~\cite{belgacem2026logistic} deepened the case for the
logistic framework by demonstrating that, for canonical
two-gene oscillators and bistable positive-autoregulation motifs,
non-zero basal output prevents the Hill-induced off-state trap: in a
biophysically grounded \emph{E.~coli} parameter regime the logistic
model escapes the basal state in $\sim\!44$~min while the Hill model
remains permanently trapped~\cite[\S 5]{belgacem2026logistic}.

The present paper takes the product-of-logistics framework developed
in~\cite{belgacem2025exploring,belgacem2026logistic} as its analytic
baseline and asks: \emph{is there a single-sigmoid formulation that
retains the product's biological interpretability at the critical
point while collapsing the analytical complexity to one sigmoid per
gene}? The answer is the bias-corrected weighted-logistic formulation
below, which inherits the global mathematical guarantees of the
product framework (\Cref{thm:well_posedness}, in the form of
\cite[Thm.~1]{belgacem2025exploring}) and the strictly-positive basal
output that prevents expression shutdown (\Cref{cor:no_shutdown},
echoing the basal-escape mechanism of~\cite{belgacem2026logistic})
while collapsing the per-gene structure to one sigmoid.

We start from the unified weighted form
\begin{equation}
\dot{x}_i = \kappa_i\,
   \frac{1}{1 + \exp\!\bigl(-\lambda \sum_{j=1}^n
                                w_{ij}(x_j-\theta_{ij})\bigr)}
   - \gamma_i x_i,
\qquad w_{ij} \in \R\;\;\text{(sign-encoded direction)},\;\theta_{ij}>0,
\label{eq:weighted_intro}
\end{equation}
which preserves regulator-specific thresholds $\theta_{ij}$ (in
contrast to~\eqref{eq:samuilik_intro}, where the threshold $\theta_i$
is shared across regulators). The price for retaining a single sigmoid
per gene is a loss of agreement with the product-of-logistics at the
critical point: \eqref{eq:weighted_intro} attains the value $1/2$
when all regulators sit at their thresholds, while
\eqref{eq:product_of_logistics_intro} attains $(1/2)^{m_i}$, where
$m_i = |\{j: w_{ij}\ne 0\}|$ counts the regulators of gene $i$.

We restore agreement at this critical point---and at two other
biologically meaningful reference configurations---by introducing the
combinatorial bias
\[
b_i = -\frac{1}{\lambda}\,\log(2^{m_i}-1),
\]
yielding the bias-corrected weighted-logistic regulatory function
$\fwgtbc(\bx) = 1/(1+(2^{m_i}-1)e^{-\lambda S_i})$ with
$S_i = \sum_j w_{ij}(x_j-\theta_{ij})$.
The bias depends only on the regulator count $m_i$ and the steepness
$\lambda$, not on the biological parameters $(\kappa_i,\gamma_i,
\theta_{ij},w_{ij})$ of the network; the dimensionless logit shift
$\lambda b_i = -\log(2^{m_i}-1)$ depends on $m_i$ alone. The bias
forces $\fwgtbc=\fprod$ at three distinguished configurations.

\subsection{Contributions}
\label{subsec:contributions}

The contributions of this paper are:
\begin{enumerate}\itemsep2pt
\item A precise statement of the bias-corrected weighted formulation
      (\Cref{def:bcw_model}) and of its embedding of the increasing
      and decreasing logistic functions through additive sign
      encoding in the exponential argument
      (\Cref{prop:embedding}). The specific bias value
      $b_i = -\log(2^{m_i}-1)/\lambda$ is characterised as the unique
      single-sigmoid correction that recovers the product's
      critical-point value $1/2^{m_i}$ and shares its canonical
      equilibrium under self-consistency
      (\Cref{prop:bias_canonicity}); its dependence on the
      \emph{combinatorial} regulator count $m_i$ alone among the
      biological parameters $(\kappa_i,\gamma_i,\theta_{ij},w_{ij})$
      is the structural origin of the bcw formulation.
\item A global well-posedness theorem (\Cref{thm:well_posedness})
      for the bias-corrected weighted ODE system, with explicit
      Lipschitz bound $L_F\le M = \max_i(\kappa_i\lambda m_i/4 +
      \gamma_i)$ in the form of~\cite[Thm.~1]{belgacem2025exploring},
      a positively invariant hyper-rectangle $\prod_i[0,\kappa_i/\gamma_i]$,
      and a strictly-positive basal-output corollary
      (\Cref{cor:no_shutdown}) echoing the no-expression-shutdown
      property of~\cite{belgacem2026logistic}.
\item A three-point matching theorem (\Cref{thm:three_point}):
      $\fwgtbc=\fprod$ at the critical point, in the low-steepness
      limit, and at the saturation limits.
\item An exact algebraic identity for the discrepancy
      (\Cref{thm:algebraic_identity}) expressing
      $\fprod-\fwgtbc$ as a single rational function whose numerator
      is a sum over proper non-empty subsets of the regulator index
      set. The identity vanishes at the three reference configurations
      and provides exact bounds on the discrepancy at intermediate
      states. Sharp leading-order asymptotic decay rates of the
      discrepancy in both saturation limits are obtained as a
      consequence (\Cref{prop:asymptotic_rates}).
\item A closed-form slope-ratio formula (\Cref{prop:slope_ratio}):
      $|\partial \fwgtbc/\partial x_l|/|\partial\fprod/\partial x_l|
      = 2-2^{1-m_i}$ at the critical point, valid for any regulator
      $l$. The ratio is monotonically increasing in $m_i$ and
      bounded above by $2$.
\item A curvature-mismatch theorem (\Cref{prop:curvature_mismatch}):
      $\partial^2\fprod/\partial x_l^2 = 0$ at the critical point for
      every $m_i\ge 1$, while $\partial^2\fwgtbc/\partial x_l^2$
      vanishes only when $m_i=1$. The structural symmetry
      $\sigma''(\theta) = 0$ that determines the supercriticality of
      pitchfork and Hopf bifurcations in the product formulation is
      therefore broken by the bias correction for $m_i\ge 2$.
\item A no-go result (\Cref{prop:no_global_eq}) ruling out any
      state-independent shift $b\in\R$ that would yield global
      equivalence between weighted and product formulations for
      $m_i\ge 2$; a refined mean-field interpretation
      (\Cref{prop:mean_field}) identifying the bias as the
      zero-order constant-in-state approximation of the log-sum-exp
      correction; and a structural-limit result
      (\Cref{prop:affine_limit}) showing that no affine
      single-sigmoid family can simultaneously match the product's
      value, slope, \emph{and} curvature at the critical point.
\item An equilibrium-correspondence theorem
      (\Cref{thm:shared_equilibrium}) giving a parameter
      self-consistency condition under which the product-of-logistics
      and bias-corrected weighted ODE systems share a canonical
      critical-point equilibrium with regulatory output $1/2^{m_i}$,
      a global-uniqueness corollary
      (\Cref{cor:global_unique}) showing that in the contractive
      regime $\gamma_i>\kappa_i\lambda m_i/4$ this shared equilibrium
      is the unique globally attracting one for both systems with
      explicit exponential convergence rate $\alpha = \min_i
      (\gamma_i - \kappa_i\lambda m_i/4)$, a quantitative robustness
      result (\Cref{prop:robustness_eq}) for perturbations of the
      self-consistent thresholds, an explicit Jacobian transformation
      $J^{\rm bcw} = D(J^{\rm prod}+\Gamma)-\Gamma$
      (\Cref{prop:linearisation}) relating the linearised dynamics
      around the shared equilibrium, and a stability dichotomy
      (\Cref{cor:stability_dichotomy}) classifying which spectral
      modes are stabilised or destabilised under the bias correction.
\item A structural comparison with~\eqref{eq:samuilik_intro}, including
      the formal equivalence between fixed-weight and real-weight
      product-of-logistics under parameter rescaling
      (\Cref{prop:weight_rescaling}), and the demonstration that the
      Samuilik prescription $\theta_i^{\mathrm{S}} = \tfrac{1}{2}\sum_j
      w_{ij}^{\mathrm{S}}$ reduces, for canonical mixed regulation, to
      an inflection condition without characteristic biological scale
      (\Cref{subsec:mixed}).
\item A numerical illustration on a canonical two-gene
      negative-feedback oscillator with self-activation, in which
      the bias correction shifts the Hopf bifurcation threshold
      \emph{down} by the predicted factor $D^{-1} = 2/3$
      ($\lambda^{\rm bcw}_{\rm Hopf} = (2/3)\,\lambda^{\rm prod}_{\rm Hopf}$,
      with $D = 2-2^{1-m_i} = 3/2$ for $m_i = 2$ the eigenvalue
      amplification factor of \Cref{prop:linearisation}), and the
      bias-corrected formulation oscillates around the shared
      critical-point equilibrium while the product-of-logistics
      remains linearly stable.
\end{enumerate}

\subsection{Organisation}
\label{subsec:organisation}

\Cref{sec:formulation} introduces the logistic activation/repression
building blocks (\Cref{subsec:logistic_blocks}), recalls the
product-of-logistics framework obtained from Boolean regulatory rules
by a recursive De~Morgan map (\Cref{subsec:product_baseline}), and
formalises the Samuilik weighted-sum class
(\Cref{subsec:samuilik_def}), the unified weighted-logistic
formulation without bias (\Cref{subsec:weighted_def}), and the
bias-corrected weighted-logistic model that is the subject of the
analysis (\Cref{subsec:bcw_def}), with global well-posedness
established in \Cref{subsec:well_posedness}.
\Cref{sec:samuilik_comparison} compares the bias-corrected weighted
formulation with the Samuilik weighted-sum model, treating the
threshold structure, the repression pathology, and mixed
activation--repression.
\Cref{sec:equivalence} contains the principal new mathematical
results: the three-point matching theorem
(\Cref{thm:three_point}), the algebraic discrepancy identity
(\Cref{thm:algebraic_identity}), the slope-ratio formula
(\Cref{prop:slope_ratio}), the curvature analysis
(\Cref{prop:curvature_mismatch}), the no-go and mean-field results
(\Cref{prop:no_global_eq,prop:mean_field}), and the structural-limit
result (\Cref{prop:affine_limit}).
\Cref{sec:weight_rescaling} establishes the parameter-rescaling
equivalence between fixed-weight and real-weight product-of-logistics.
\Cref{sec:equilibrium} develops the dynamical correspondence between
the two formulations at shared critical-point equilibria, including
the linearisation comparison (\Cref{prop:linearisation}).
\Cref{sec:application} illustrates the equilibrium correspondence,
the eigenvalue map, and the stability dichotomy on a canonical
two-gene negative-feedback oscillator with self-activation.
\Cref{sec:discussion} discusses practical guidance and outlooks.
\Cref{sec:conclusion} concludes.

\section{Model Formulation}
\label{sec:formulation}

We formalise four sigmoidal regulatory models for gene networks. After
introducing the elementary logistic building blocks
(\Cref{subsec:logistic_blocks}), we recall the product-of-logistics
framework obtained by mapping Boolean regulatory rules through a
recursive De~Morgan map (\Cref{subsec:product_baseline}), then
introduce the Samuilik weighted-sum class
(\Cref{subsec:samuilik_def}), the unified weighted-logistic
formulation without bias (\Cref{subsec:weighted_def}), and finally the
bias-corrected weighted-logistic model that is the subject of the
analysis (\Cref{subsec:bcw_def}). \Cref{subsec:well_posedness}
establishes global well-posedness of the bias-corrected weighted ODE.

Throughout, $\bx(t)=(x_1(t),\ldots,x_n(t))^\top\in\R^n$ denotes the
vector of gene-expression levels in a network of $n$ genes. Each
gene's dynamics obey a balance between sigmoidally regulated synthesis
and linear degradation:
\begin{equation}
\dot{x}_i \;=\; \kappa_i\,f_i(\bx) - \gamma_i\,x_i,
\qquad i=1,\ldots,n,
\label{eq:general_GRN}
\end{equation}
where $\kappa_i>0$ is the maximal synthesis rate, $\gamma_i>0$ the
degradation rate, and $f_i:\R^n\to(0,1)$ the regulatory function. We
write $\Acal_i$ and $\Rcal_i$ for the index sets of activators and
repressors of gene $i$, and $m_i = |\Acal_i|+|\Rcal_i|$ for the total
regulator count.

\subsection{Logistic activation and repression functions}
\label{subsec:logistic_blocks}

The elementary building blocks of every sigmoidal GRN formulation
considered here are the \emph{increasing} logistic function modelling
gene activation,
\begin{equation}
f^+(x, \theta, \lambda) \;=\; \frac{1}{1 + e^{-\lambda (x - \theta)}},
\label{eq:logistic_activation}
\end{equation}
and the \emph{decreasing} logistic function modelling gene repression,
\begin{equation}
f^-(x, \theta, \lambda) \;=\; \frac{1}{1 + e^{\lambda (x - \theta)}}
\;=\; \frac{1}{1 + e^{-\lambda (\theta - x)}}
\;=\; 1 - f^+(x, \theta, \lambda),
\label{eq:logistic_repression}
\end{equation}
both with steepness $\lambda>0$ and regulator-specific threshold
$\theta>0$. The analytical advantages of $f^\pm$ over Hill functions
$h^\pm(x,\theta,n) = x^n/(\theta^n+x^n)$ (and the parameter matching
$\lambda = n/\theta$ that preserves slope at the half-maximum), namely
global $C^\infty$-smoothness, uniform Lipschitz bound
$|\partial f^\pm/\partial x|\le \lambda/4$, closed-form derivatives
$\partial f^\pm/\partial x = \pm\lambda f^\pm(1-f^\pm)$, elementary
antiderivatives, logit inversions, and strictly positive basal output
$f^+(0,\theta,\lambda) = 1/(1+e^{\lambda\theta})>0$, are catalogued in
detail in~\cite{belgacem2025exploring,belgacem2026logistic} and
underlie everything that follows.

\subsection{General multi-gene regulatory networks via Boolean-to-logistic recursion}
\label{subsec:product_baseline}

Each gene's regulatory logic can be encoded as a Boolean function of
its activators and repressors. The continuous-state \emph{product-of-logistics}
counterpart of such a rule is obtained by a recursive map $\Phi$
from Boolean expressions to functions $\R^n\to(0,1)$, defined on the
following primitive constructions
(see~\cite{belgacem2025exploring,belgacem2026logistic} for the
underlying derivation and a complete catalogue of structural
properties):

\begin{itemize}\itemsep1pt
\item \textbf{Positive literal (activation):} $\Phi(x_j) = f^+(x_j,\theta_{ij},\lambda)$.
\item \textbf{Negation (repression):} $\Phi(\lnot x_k) =
       f^-(x_k,\theta_{ik},\lambda) = 1 - f^+(x_k,\theta_{ik},\lambda)$.
\item \textbf{Conjunction (AND, cooperative co-regulation):}
       $\Phi\bigl(\bigwedge_{l=1}^{k} L_l\bigr) = \prod_{l=1}^{k}\Phi(L_l)$.
\item \textbf{Disjunction (OR, independent activation), via De~Morgan:}
\begin{equation}
\Phi\!\Bigl(\bigvee_{l=1}^{m} C_l(\bx)\Bigr)
\;=\;
1 - \prod_{l=1}^{m}\bigl(1 - \Phi(C_l(\bx))\bigr),
\label{eq:demorgan}
\end{equation}
       which follows from $\lnot(\lnot C_1\wedge\cdots\wedge\lnot C_m)$
       applied recursively and coincides with the probability that at
       least one of $m$ independent events occurs.
\end{itemize}
The recursion is consistent (Boolean values map to the interval
endpoints, AND/NOT/OR satisfy their classical truth tables when
inputs saturate, and $\Phi\in(0,1)$ on every state) and produces a
smooth surrogate of the underlying Boolean rule. The De~Morgan
product~\eqref{eq:demorgan} guarantees $\Phi\in[0,1]$ for any number
of independent regulatory pathways, in contrast to additive
translations of Boolean OR which can reach $m\gg 1$ when several
clauses fire simultaneously, inflating the production rate and
breaking the biological bound $x_i\le\kappa_i/\gamma_i$.

For the canonical case of $m_i$ regulators in a single AND clause
($|\Acal_i|$ activators and $|\Rcal_i|$ repressors, all required
simultaneously), the recursion specialises to the
\emph{product-of-logistics regulatory function}
\begin{equation}
\fprod(\bx) \;=\;
   \prod_{j\in\Acal_i}\!\frac{1}{1+e^{-\lambda(x_j-\theta_{ij})}}
   \;\cdot\;
   \prod_{k\in\Rcal_i}\!\frac{1}{1+e^{-\lambda(\theta_{ik}-x_k)}},
\label{eq:f_prod_def}
\end{equation}
which serves as the analytical baseline of this paper. The
mathematical foundations of \eqref{eq:f_prod_def}---global existence,
$C^\infty$-smoothness, uniform boundedness, an explicit Lipschitz
constant, monotonicity, diagonal-dominance criteria for global
stability, and closed-form bifurcation theorems on canonical
motifs---are established in detail
in~\cite{belgacem2025exploring,belgacem2026logistic} and we treat them
as established background.

\begin{remark}[Critical point of the product]
\label{rem:critical_point}
The \emph{critical point} of gene $i$ is the configuration
$\bx^{(\mathrm{cp})}_i$ at which every regulator of $i$ sits at its
threshold: $x_j = \theta_{ij}$ for all $j\in\Acal_i\cup\Rcal_i$. At
this point each factor of \eqref{eq:f_prod_def} equals $\half$, so
$\fprod(\bx^{(\mathrm{cp})}_i) = (1/2)^{m_i}$.
\end{remark}

\begin{remark}[Equivalent compact form with sign-encoded directions]
\label{rem:prod_compact}
Enumerating the regulators of gene $i$ by $m\in\{1,\ldots,m_i\}$ via
an index map $j(i,m)$ and recording the regulatory direction by
$\epsilon_{i,m}\in\{+1,-1\}$ (with $\epsilon_{i,m}=+1$ for $j(i,m)\in\Acal_i$
and $\epsilon_{i,m}=-1$ for $j(i,m)\in\Rcal_i$), the product
formulation \eqref{eq:f_prod_def} admits the compact sign-encoded form
\begin{equation}
\dot{x}_i \;=\; \kappa_i\,\fprod(\bx) - \gamma_i x_i,
\qquad
\fprod(\bx) \;=\; \prod_{m=1}^{m_i}
\frac{1}{1+\exp\!\bigl(-\lambda\,\epsilon_{i,m}\,(x_{j(i,m)}-\theta_{i,m})\bigr)},
\label{eq:product_of_logistics_intro}
\end{equation}
where $\theta_{i,m}\equiv\theta_{i,j(i,m)}$ is the threshold of the
$m$-th regulator. Both representations describe identical dynamics and
reduce to one another once $\epsilon_{i,m}$ is read as the encoding of
the dichotomy $\Acal_i\sqcup\Rcal_i$. The construction extends to
regulator-specific steepness $\lambda_{i,m}$ at the cost of a more
elaborate bias formula in the bias-corrected weighted formulation; see
\Cref{rem:nonuniform_lambda}.
\end{remark}

\subsection{Weighted-sum models}
\label{subsec:samuilik_def}

The product structure of \eqref{eq:f_prod_def} requires $m_i$ logistic
evaluations per gene; for high-dimensional networks an alternative
tradition aggregates regulatory inputs into a single weighted sum
passed through one increasing logistic per
gene~\cite{samuilik2022mathematical,kozlovska2022models,
sadyrbaev2021modelling,kozlovska2023quasi,samuilik2022genetic,
sadyrbaev2023coexistence,somathilaka2023revealing,vohradsky2001neural}:
\begin{equation}
\dot{x}_i \;=\; \kappa_i\,\fS(\bx) - \gamma_i x_i,
\qquad
\fS(\bx) \;=\;
\frac{1}{1+\exp\!\bigl(-\mu_i\bigl(\sum_{j=1}^n w_{ij}^{\mathrm{S}}x_j -
                              \theta_i^{\mathrm{S}}\bigr)\bigr)},
\qquad w_{ij}^{\mathrm{S}}\in\R,
\label{eq:samuilik_def}
\end{equation}
with gene-specific steepness $\mu_i$, real-valued signed weights
$w_{ij}^{\mathrm{S}}$ (positive for activation, negative for
repression), and a \emph{single} shared threshold $\theta_i^{\mathrm{S}}$
per gene. The canonical threshold prescription
in~\cite{samuilik2022mathematical,kozlovska2022models} is
$\theta_i^{\mathrm{S}} = \tfrac{1}{2}\sum_j w_{ij}^{\mathrm{S}}$.
Formulation \eqref{eq:samuilik_def} compresses $m_i$ nonlinear terms
into one, an attractive feature for control synthesis and parameter
estimation, but the consequences of using a single shared threshold and
signed weights inside an \emph{increasing} sigmoid (a critical-point
pathology for repressors and a threshold-scaling pathology) were
identified in~\cite[\S 8.3]{belgacem2025exploring}; we revisit and
quantify them in \Cref{sec:samuilik_comparison}.

\subsection{The unified weighted-logistic formulation (without bias)}
\label{subsec:weighted_def}

A natural single-sigmoid alternative to \eqref{eq:samuilik_def} that
\emph{retains} regulator-specific thresholds $\theta_{ij}$ is the
weighted-logistic formulation:

\begin{definition}[Unified weighted-logistic GRN model]
\label{def:weighted_model}
The \emph{unified weighted-logistic} GRN model is
\begin{equation}
\dot{x}_i \;=\; \kappa_i\,\fwgt(\bx) - \gamma_i x_i,
\qquad
\fwgt(\bx) \;=\;
\frac{1}{1 + \exp\!\bigl(-\lambda S_i(\bx)\bigr)},
\quad
S_i(\bx) \;=\; \sum_{j=1}^n w_{ij}(x_j-\theta_{ij}),
\label{eq:weighted_def}
\end{equation}
with parameters $\kappa_i,\gamma_i,\lambda > 0$, regulator-specific
thresholds $\theta_{ij}>0$, and signed weights $w_{ij}\in\R$ encoding
the direction of interaction by sign:
\[
w_{ij} > 0 \;\Longleftrightarrow\; j\in\Acal_i \;\text{(activator)},
\qquad
w_{ij} < 0 \;\Longleftrightarrow\; j\in\Rcal_i \;\text{(repressor)},
\qquad
w_{ij} = 0 \;\Longleftrightarrow\; \text{no interaction}.
\]
The total regulator count is $m_i = \bigl|\{j: w_{ij}\ne 0\}\bigr|$.
\end{definition}

\begin{proposition}[Embedding of activation and repression as signed weights]
\label{prop:embedding}
The affine displacement appearing in $S_i$ admits the split
\begin{equation}
w_{ij}(x_j-\theta_{ij}) \;=\;
\begin{cases}
+|w_{ij}|\,(x_j-\theta_{ij}), & w_{ij}>0,\\[1pt]
-|w_{ij}|\,(x_j-\theta_{ij}) \;=\; |w_{ij}|\,(\theta_{ij}-x_j), & w_{ij}<0,
\end{cases}
\label{eq:S_split}
\end{equation}
so activation and repression are encoded by the sign of $w_{ij}$
within a single increasing sigmoid in \eqref{eq:weighted_def}, with
$|w_{ij}|$ playing the role of a regulator-specific gain.
\end{proposition}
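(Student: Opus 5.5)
The identity \eqref{eq:S_split} is immediate from the definition of absolute value, so I will verify it case by case and then read off its consequences for the sigmoid in \eqref{eq:weighted_def}.

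\emph{The split.} If $w_{ij}>0$, then $w_{ij}=|w_{ij}|$ and the first line holds trivially. If $w_{ij}<0$, then $w_{ij}=-|w_{ij}|$, so $w_{ij}(x_j-\theta_{ij}) = -|w_{ij}|(x_j-\theta_{ij}) = |w_{ij}|(\theta_{ij}-x_j)$. If $w_{ij}=0$, the term vanishes, consistent with the absence of an interaction, so only indices with $w_{ij}\neq 0$ contribute to $S_i$. Consequently
\[
S_i(\bx) \;=\; \sum_{j\in\Acal_i}|w_{ij}|(x_j-\theta_{ij}) \;+\; \sum_{k\in\Rcal_i}|w_{ik}|(\theta_{ik}-x_k),
\]
using the sign dictionary of \Cref{def:weighted_model}.

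\emph{Activation and repression.} Next I will justify the interpretive claim by differentiating. Let $\sigma(u)=1/(1+e^{-u})$, so that $\sigma'=\sigma(1-\sigma)>0$. The chain rule gives
\[
\partial \fwgt/\partial x_j \;=\; \lambda\,w_{ij}\,\fwgt(1-\fwgt).
\]
This derivative has the sign of $w_{ij}$. Hence $\fwgt$ is increasing in activators and decreasing in repressors, with local gain proportional to $|w_{ij}|$.

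\emph{Single-regulator case.} Finally, I will check that a single regulator reproduces the building blocks. Setting $m_i=1$ and $|w_{ij}|=1$, the split gives $\fwgt = f^+(x_j,\theta_{ij},\lambda)$ when $w_{ij}>0$. When $w_{ij}<0$ it gives $\sigma(\lambda(\theta_{ij}-x_j)) = f^-(x_j,\theta_{ij},\lambda)$, by the second equality in \eqref{eq:logistic_repression}. For general $|w_{ij}|$, the same holds with steepness $\lambda|w_{ij}|$, which confirms the role of $|w_{ij}|$ as a regulator-specific gain.

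None of these steps is an obstacle. The only point needing care is that the threshold is preserved under sign reversal: the midpoint stays at $x_j=\theta_{ij}>0$ for repressors, in contrast with \eqref{eq:samuilik_def}. This follows because the factor $(x_j-\theta_{ij})$ is centred before it is multiplied by the weight.
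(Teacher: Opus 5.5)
Your proof is correct and matches the paper's argument. The paper verifies the split by directly factoring $w_{ij}(x_j-\theta_{ij})$ using $w_{ij}=\pm|w_{ij}|$, and reads off the activation/repression interpretation from the definitions of $f^+$ and $f^-$, as your single-regulator check does; your derivative formula $\partial\fwgt/\partial x_j=\lambda w_{ij}\fwgt(1-\fwgt)$ is a harmless extra that makes the monotonicity claim explicit for general $m_i$.
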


\begin{proof}
Direct factoring of $w_{ij}(x_j-\theta_{ij})$. The interpretation in
terms of $f^\pm$ is immediate from \eqref{eq:logistic_activation}--%
\eqref{eq:logistic_repression}.
\end{proof}

\Cref{prop:embedding} establishes that activation and repression are
encoded \emph{within the exponential argument} of a single sigmoid in
the weighted formulation, in contrast with the product formulation
\eqref{eq:f_prod_def} which encodes them \emph{multiplicatively}
through distinct functional forms. This unification is the source of
the weighted formulation's analytical compactness but also the source
of the agreement failure that motivates the bias correction.

\begin{remark}[Magnitude rescaling]
\label{rem:magnitude_rescaling}
Under the embedding \eqref{eq:S_split}, the magnitude $|w_{ij}|$
multiplies $\lambda$ and rescales each $\theta_{ij}$. By the
parameter-rescaling identity established in
\Cref{prop:weight_rescaling} below, the formulation
\eqref{eq:weighted_def} with arbitrary real-valued $w_{ij}$ is
dynamically equivalent to one with unit magnitudes $|w_{ij}|=1$ and
rescaled $(\lambda,\theta_{ij})$. The analysis below is conducted
under the convention $|w_{ij}|=1$ without loss of generality,
restoring real magnitudes only where explicitly relevant.
\end{remark}

\subsection{Critical-point mismatch and the bias correction}
\label{subsec:bcw_def}

We now record the critical-point value mismatch between the weighted
and product formulations, and introduce the bias-corrected weighted
formulation that resolves it.

\begin{lemma}[Critical-point values]
\label{lem:critical_values}
At the critical point $\bx_i^{(\mathrm{cp})}$ of gene $i$:
\begin{equation}
\fprod(\bx_i^{(\mathrm{cp})}) = (1/2)^{m_i},
\qquad
\fwgt(\bx_i^{(\mathrm{cp})}) = 1/2.
\end{equation}
The two values agree iff $m_i = 1$.
\end{lemma}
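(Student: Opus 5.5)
The plan is to evaluate both regulatory functions directly at $\bx_i^{(\mathrm{cp})}$ and then compare the resulting numbers.

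\emph{Product.} The value $\fprod(\bx_i^{(\mathrm{cp})})=(1/2)^{m_i}$ is exactly \Cref{rem:critical_point}. I will still verify it from \eqref{eq:f_prod_def}. At the critical point each activator factor has exponent $-\lambda(x_j-\theta_{ij})=0$, and each repressor factor has exponent $-\lambda(\theta_{ik}-x_k)=0$. So every one of the $m_i=|\Acal_i|+|\Rcal_i|$ factors equals $1/(1+e^0)=1/2$, and the product is $2^{-m_i}$. Regulators of other genes do not appear in $\fprod$, so the coordinates of $\bx$ outside $\Acal_i\cup\Rcal_i$ play no role.

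\emph{Weighted.} In $S_i(\bx)=\sum_{j}w_{ij}(x_j-\theta_{ij})$, every term with $w_{ij}=0$ vanishes identically. Every term with $w_{ij}\neq0$ corresponds to $j\in\Acal_i\cup\Rcal_i$ by \Cref{def:weighted_model}, so $x_j=\theta_{ij}$ there and the term is zero. Hence $S_i(\bx_i^{(\mathrm{cp})})=0$. Substituting into \eqref{eq:weighted_def} gives $\fwgt=1/(1+e^{0})=1/2$. This holds regardless of the magnitudes $|w_{ij}|$, so the convention of \Cref{rem:magnitude_rescaling} is not needed here.

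\emph{Comparison.} The two values agree iff $2^{-m_i}=2^{-1}$. Since $t\mapsto 2^{-t}$ is strictly decreasing, hence injective, this is equivalent to $m_i=1$. If the degenerate case $m_i=0$ (no regulators) is admitted, then $\fprod\equiv1$ as an empty product, while $\fwgt\equiv1/2$; they still disagree, so the equivalence remains true.

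There is no real obstacle in this argument. The only point needing care is the bookkeeping that identifies the nonzero-weight index set $\{j:w_{ij}\neq0\}$ with $\Acal_i\cup\Rcal_i$. That identification is what makes $m_i$ the same quantity in both formulations and guarantees that the critical point zeroes every surviving term of $S_i$.
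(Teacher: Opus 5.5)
Your proposal is correct and follows the paper's own proof essentially verbatim. The product value comes from \Cref{rem:critical_point}, which you re-derive factor by factor; $S_i(\bx_i^{(\mathrm{cp})})=0$ gives $\fwgt=1/2$; and injectivity of $t\mapsto 2^{-t}$ settles the comparison. Your remarks on the $m_i=0$ case and on the unit-magnitude convention being unnecessary are sound additions that the paper does not spell out.
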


\begin{proof}
The product value is computed in \Cref{rem:critical_point}. For the
weighted form, $S_i(\bx_i^{(\mathrm{cp})}) = \sum_j w_{ij}\cdot 0 = 0$,
so $\fwgt = 1/(1+e^0) = 1/2$. Equality $(1/2)^{m_i} = 1/2$ holds iff
$m_i=1$.
\end{proof}

\begin{lemma}[Low-steepness and saturation values]
\label{lem:limits}
With $u_j = w_{ij}\lambda(x_j-\theta_{ij})$ for $j\in[m_i]$ and
$S_i = \sum_j w_{ij}(x_j-\theta_{ij})$:
\begin{itemize}\itemsep1pt
\item[\textnormal{(a)}] (\emph{Low-steepness limit}.) On every compact
$K\subset\R^n$, as $\lambda\to 0^+$,
\[
\fprod(\bx)\to (1/2)^{m_i},\qquad \fwgt(\bx)\to 1/2,
\]
uniformly in $\bx\in K$.
\item[\textnormal{(b)}] (\emph{Joint positive saturation}.) Whenever
$u_j\to +\infty$ for every $j\in[m_i]$ (which forces $\lambda S_i\to
+\infty$):
\[
\fprod(\bx)\to 1, \qquad \fwgt(\bx)\to 1.
\]
\item[\textnormal{(c)}] (\emph{Joint negative saturation}.) Whenever
$u_j\to -\infty$ for every $j\in[m_i]$ (which forces $\lambda S_i\to
-\infty$):
\[
\fprod(\bx)\to 0, \qquad \fwgt(\bx)\to 0.
\]
\end{itemize}
\end{lemma}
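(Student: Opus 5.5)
The plan is to rewrite both regulatory functions in terms of the scaled displacements $u_j = w_{ij}\lambda(x_j-\theta_{ij})$. Under the unit-magnitude convention of \Cref{rem:magnitude_rescaling}, or more generally via \Cref{prop:embedding}, each factor of \eqref{eq:f_prod_def} equals $\sigma(u_j)$ with $\sigma(t)=1/(1+e^{-t})$. For an activator, $\sigma(u_j)=f^+(x_j,\theta_{ij},\lambda)$, and for a repressor the sign flip gives $f^-$. In general $|w_{ij}|$ merely rescales $\lambda$ in that factor. So we work with
\[
\fprod=\prod_{j=1}^{m_i}\sigma(u_j),\qquad \fwgt=\sigma\Bigl(\sum_{j=1}^{m_i}u_j\Bigr)=\sigma(\lambda S_i).
\]
All three parts then reduce to continuity and limit properties of $\sigma$, together with finiteness of the product.

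For part (a), I will fix a compact $K$ and set $C=\max_j\sup_{\bx\in K}|w_{ij}(x_j-\theta_{ij})|<\infty$, so that $|u_j|\le\lambda C$ and $|\lambda S_i|\le m_i\lambda C$ uniformly on $K$. Since $\sigma$ is Lipschitz with constant $1/4$ and $\sigma(0)=\half$, we get $|\sigma(u_j)-\half|\le\lambda C/4$ and $|\fwgt-\half|\le m_i\lambda C/4$, uniformly in $\bx\in K$.

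For the product, I will use the telescoping bound $|\prod a_j-\prod b_j|\le\sum_j|a_j-b_j|$, which holds for $a_j,b_j\in[0,1]$. This gives $|\fprod-(1/2)^{m_i}|\le m_i\lambda C/4$, which tends to $0$ as $\lambda\to0^+$.

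For part (b), each $u_j\to+\infty$ gives $\sigma(u_j)\to1$, so the finite product tends to $1$. Moreover $\lambda S_i=\sum_j u_j$ is a finite sum of terms tending to $+\infty$, so it also tends to $+\infty$ and $\fwgt\to1$.

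Part (c) is symmetric: each $\sigma(u_j)\to0$ and all factors lie in $[0,1]$, so $0<\fprod\le\sigma(u_1)\to0$. Likewise $\sum_j u_j\to-\infty$ gives $\fwgt\to0$.

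The only step that needs care is the uniformity claim in (a). This is where I would make the telescoping estimate explicit rather than simply invoking continuity. I will also note that the limits in (b) and (c) are understood along any sequence or path of states and parameters on which every $u_j$ diverges with the stated sign.
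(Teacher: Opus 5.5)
Your proposal is correct and follows the paper's own proof essentially step for step. Both arguments write the factors as $\sigma(u_j)$ and use the Lipschitz bound $|\sigma'|\le 1/4$ together with $|\lambda S_i|\to 0$ on $K$ for (a), then take termwise limits of the finite product and of $\lambda S_i=\sum_j u_j$ for (b)--(c); your explicit telescoping estimate $|\fprod-(1/2)^{m_i}|\le m_i\lambda C/4$ just spells out the product-uniformity step the paper leaves implicit.
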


\begin{proof}
\textbf{(a)} For $\lambda\to 0^+$ at fixed $\bx\in K$, every factor
$\sigma(u_j) = \sigma(\lambda w_{ij}(x_j-\theta_{ij}))\to \sigma(0) =
1/2$, and the sigmoid Lipschitz bound $|\sigma'|\le 1/4$ gives
uniformity on $K$. Hence $\fprod\to (1/2)^{m_i}$. The weighted
argument satisfies $|\lambda S_i|\le \lambda\sup_{\bx\in K}|S_i(\bx)|
\to 0$, so $\fwgt=\sigma(\lambda S_i)\to 1/2$ uniformly on $K$.
\textbf{(b)} If $u_j\to +\infty$ for every $j$, then
$\sigma(u_j)\to 1$ for every $j$ and the product
$\prod_j\sigma(u_j)\to 1$; also $\lambda S_i = \sum_j u_j\to +\infty$
forces $\fwgt = \sigma(\lambda S_i)\to 1$.
\textbf{(c)} Symmetric: if $u_j\to -\infty$ for every $j$, then
$\sigma(u_j)\to 0$, $\fprod\to 0$, and $\lambda S_i\to -\infty$
forces $\fwgt\to 0$.
\end{proof}

\begin{remark}[On the joint-saturation hypothesis]
\Cref{lem:limits}(b)--(c) state \emph{joint} saturation limits; the
condition $\lambda S_i\to\pm\infty$ alone does not determine $\fprod$
without the joint condition on the $u_j$'s, because $\lambda S_i$
can be driven to $+\infty$ by a few large positive $u_j$ even while
other $u_j$'s remain negative, in which case $\fprod\to 0$ (the
single saturated repressor dominates the product).
\end{remark}

The critical-point mismatch $(1/2)^{m_i} \ne 1/2$ for $m_i\ge 2$ is
\emph{combinatorial} in origin: it is determined entirely by the
regulator count. This suggests resolving it by a corresponding
combinatorial shift of the weighted formulation's threshold, which is
the content of the next definition.

\begin{definition}[Bias-corrected weighted-logistic GRN model]
\label{def:bcw_model}
The \emph{bias-corrected weighted-logistic} GRN model is
\begin{equation}
\dot{x}_i \;=\; \kappa_i\,\fwgtbc(\bx) - \gamma_i x_i,
\qquad
\fwgtbc(\bx) \;=\;
\frac{1}{1 + \exp\!\bigl(-\lambda(S_i(\bx) + b_i)\bigr)},
\quad
b_i \;=\; -\frac{1}{\lambda}\,\log(2^{m_i}-1),
\label{eq:bcw_model}
\end{equation}
with $S_i$ and $m_i$ as in \Cref{def:weighted_model}. Equivalently,
\begin{equation}
\fwgtbc(\bx) \;=\;
\frac{1}{1 + (2^{m_i}-1)\,\exp\!\bigl(-\lambda S_i(\bx)\bigr)}
\;=\;
\frac{e^{\lambda S_i(\bx)}}{(2^{m_i}-1) + e^{\lambda S_i(\bx)}}.
\label{eq:bcw_compact}
\end{equation}
\end{definition}

The bias $b_i$ depends only on the regulator count $m_i$ and the
steepness $\lambda$. It does not depend on the parameters
$(\kappa_i,\gamma_i,\theta_{ij},w_{ij})$ and is therefore determined
\emph{entirely by the combinatorial structure of the network}.
\Cref{def:bcw_model} reduces to the unified weighted form
\eqref{eq:weighted_def} when $m_i=1$ (since then $b_i=0$).

The specific value $b_i = -\log(2^{m_i}-1)/\lambda$ is not an arbitrary
choice but the unique constant for which the resulting single-sigmoid
model agrees with the product-of-logistics at the critical point and
shares its canonical equilibrium under self-consistency. We make this
precise in the next proposition.

\begin{proposition}[Canonicity of the bias correction]
\label{prop:bias_canonicity}
Fix a network topology $(w_{ij},\theta_{ij})_{i,j}$ with regulator
counts $m_i\ge 1$ and steepness $\lambda>0$. Consider the
one-parameter family of single-sigmoid regulatory models
\begin{equation}
\tilde{f}_i(\bx;\,c_i) \;:=\;
\sigma\!\bigl(\lambda\,S_i(\bx) + c_i\bigr),
\qquad c_i\in\mathbb{R},
\label{eq:generic_corrected}
\end{equation}
where $S_i$ is the signed regulator sum of \Cref{def:weighted_model}.
Then:
\begin{enumerate}\itemsep1pt
\item[\textnormal{(i)}] (\textbf{Critical-point matching uniqueness.}) The constant
$c_i$ is unique with the property that $\tilde{f}_i$ agrees with the
product-of-logistics at the critical point
$\bx_i^{(\mathrm{cp})}$ of gene $i$:
\begin{equation}
\tilde{f}_i(\bx_i^{(\mathrm{cp})};\,c_i)
\;=\; \fprod(\bx_i^{(\mathrm{cp})})
\;=\; \frac{1}{2^{m_i}}
\qquad\Longleftrightarrow\qquad
c_i \;=\; -\log(2^{m_i}-1).
\label{eq:bias_unique_cp}
\end{equation}
\item[\textnormal{(ii)}] (\textbf{Equilibrium-sharing uniqueness.}) Suppose the network
parameters satisfy the self-consistency conditions $\theta_{ij} =
x_j^{*}$ of \Cref{thm:shared_equilibrium}. Then $c_i = -\log(2^{m_i}
-1)$ is the unique constant for which the ODE
$\dot{x}_i = \kappa_i\tilde{f}_i(\bx;\,c_i) - \gamma_i x_i$ admits the
canonical critical-point equilibrium
\[
x_i^{*} \;=\; \frac{\kappa_i}{\gamma_i\,2^{m_i}},
\qquad i = 1,\ldots,n,
\]
shared with the product-of-logistics.
\item[\textnormal{(iii)}] (\textbf{Combinatorial dependence.}) The canonical bias
$c_i = -\log(2^{m_i}-1)$ depends only on the regulator count $m_i$;
it is independent of $\lambda$ as a logit value and independent of
$(\kappa_i,\gamma_i,\theta_{ij},w_{ij})$ as a parameter of the
network. In particular, an additive constant correction that adapts
only to the local in-degree of each gene is sufficient to restore
critical-point agreement.
\end{enumerate}
\end{proposition}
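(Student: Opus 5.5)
The plan is to reduce all three parts to the strict monotonicity, hence injectivity, of the logistic map $\sigma:\R\to(0,1)$ and its explicit inverse $\sigma^{-1}(p)=\log\bigl(p/(1-p)\bigr)$.

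For (i), I evaluate $\tilde f_i$ at $\bx_i^{(\mathrm{cp})}$. There every regulator satisfies $x_j=\theta_{ij}$, and non-regulators carry $w_{ij}=0$, so $S_i(\bx_i^{(\mathrm{cp})})=0$ exactly as in the proof of \Cref{lem:critical_values}. Hence $\tilde f_i(\bx_i^{(\mathrm{cp})};c_i)=\sigma(c_i)$. By \Cref{rem:critical_point} the target value is $2^{-m_i}$. Because $\sigma$ is a bijection onto $(0,1)$ and $2^{-m_i}\in(0,1)$, the equation $\sigma(c_i)=2^{-m_i}$ has the unique solution
\[
c_i=\log\frac{2^{-m_i}}{1-2^{-m_i}}=-\log(2^{m_i}-1).
\]
The converse direction is the same computation read backwards. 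Comparing with \eqref{eq:bcw_model} then gives $c_i=\lambda b_i$.

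For (ii), I first check existence. Impose $\theta_{ij}=x_j^*$ with $x_j^*=\kappa_j/(\gamma_j 2^{m_j})$. At $\bx=\bx^*$, every gene $i$ is simultaneously at its critical point, so $S_i(\bx^*)=0$. The equilibrium condition for gene $i$ then reads $\kappa_i\sigma(c_i)=\gamma_i x_i^*=\kappa_i 2^{-m_i}$, that is, $\sigma(c_i)=2^{-m_i}$. By (i) this equation holds precisely when $c_i=-\log(2^{m_i}-1)$, which gives both existence and uniqueness gene by gene.

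The expected obstacle is interpreting ``admits the canonical equilibrium $x^*$'' correctly. I will read it as requiring that the specific point $\bx^*$, determined by the self-consistent thresholds, be an equilibrium. I will also state explicitly that the self-consistency $\theta_{ij}=x_j^*$ is what makes $S_i(\bx^*)=0$, and that the product system shares $\bx^*$ by \Cref{rem:critical_point}. I would note, without proving it here, that \Cref{thm:shared_equilibrium} gives the product side of this statement.

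For (iii), I read off from the closed form $c_i=-\log(2^{m_i}-1)$ that $c_i$ involves only $m_i$. Independence from $(\kappa_i,\gamma_i,\theta_{ij},w_{ij})$ follows because these parameters enter neither $\sigma(c_i)=2^{-m_i}$ nor, thanks to $S_i(\bx_i^{(\mathrm{cp})})=0$, the evaluation point. The bias $b_i=c_i/\lambda$ carries the $\lambda$-dependence only through this rescaling.
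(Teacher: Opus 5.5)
Your proposal is correct and follows the paper's proof essentially step for step. You use $S_i=0$ at the critical point and invert $\sigma$ via the logit to get the unique $c_i=-\log(2^{m_i}-1)$. You reduce (ii) to (i) through $S_i(\bx^*)=0$ under self-consistency, and you read (iii) directly off the closed form.
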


\begin{proof}
\textbf{(i)} At the critical point $S_i(\bx_i^{(\mathrm{cp})}) = 0$
(\Cref{rem:critical_point}), so
$\tilde{f}_i(\bx_i^{(\mathrm{cp})};\,c_i) = \sigma(c_i)$. Equating this
to $1/2^{m_i}$ and applying the logit transformation gives
\[
c_i \;=\; \log\!\Bigl(\frac{1/2^{m_i}}{1-1/2^{m_i}}\Bigr)
\;=\; \log\!\Bigl(\frac{1}{2^{m_i}-1}\Bigr)
\;=\; -\log(2^{m_i}-1),
\]
and the strict monotonicity of $\sigma$ on $\mathbb{R}$ makes the
solution unique.
\textbf{(ii)} Under self-consistency every regulator of gene $i$ sits
at its threshold at $\bx^{*}$, so $S_i(\bx^{*})=0$. The equilibrium
condition $\dot{x}_i = 0$ at $\bx^{*}$ reads $\kappa_i\sigma(c_i) =
\gamma_i x_i^{*}$, hence $\sigma(c_i) = \gamma_i x_i^{*}/\kappa_i$.
Requiring $x_i^{*} = \kappa_i/(\gamma_i\,2^{m_i})$ forces $\sigma(c_i)
= 1/2^{m_i}$, and (i) gives the unique solution $c_i = -\log(2^{m_i}
-1)$.
\textbf{(iii)} Both formulas $c_i = -\log(2^{m_i}-1)$ and $b_i =
c_i/\lambda$ contain $m_i$ alone among the network parameters; the
factor $1/\lambda$ in $b_i$ converts a dimensionless logit shift into
a state-space threshold offset, but the dimensionless shift $c_i$
itself depends only on $m_i$.
\end{proof}

\Cref{prop:bias_canonicity} establishes that
\emph{any} single-sigmoid corrected formulation that aligns its
critical-point value with the product-of-logistics must use precisely
the bias $b_i = -\log(2^{m_i}-1)/\lambda$. There is no other constant
correction that achieves either critical-point matching or
equilibrium sharing, so the bcw formulation is the unique
single-sigmoid sibling of the product in this sense. The
canonicity of \Cref{prop:bias_canonicity} is thus a \emph{local
zeroth-order} statement (exact value at the critical point, exact
equilibrium at the canonical configuration under self-consistency).
Higher-order obstructions remain: by the slope-ratio formula
\eqref{eq:slope_ratio} of \Cref{prop:slope_ratio}, the leading-order
discrepancy in the direction of regulator $l$ satisfies
\[
\fprod(\bx) - \fwgtbc(\bx)
\;=\; -(1 - 2^{1-m_i})\,\partial_{x_l}\fprod\big|_{\rm cp}\,
        (x_l - \theta_{il}) \;+\; O(\|\bx-\bx_i^{(\mathrm{cp})}\|^2),
\]
which is linear in the displacement with explicit coefficient
$(1-2^{1-m_i})\,\partial_{x_l}\fprod$ in absolute value; the
curvature mismatch \eqref{eq:fwgt_curv} of
\Cref{prop:curvature_mismatch} is the next-order obstruction, and the
no-go result of \Cref{prop:no_global_eq} below shows that no constant
correction can extend the local matching to a global one.
\Cref{prop:bias_canonicity} should therefore be read as
characterising the unique canonical \emph{constant} bias compatible
with the product's combinatorial critical-point structure, not as a
claim of full local equivalence.

\subsection{Global well-posedness of the bias-corrected weighted ODE}
\label{subsec:well_posedness}

We record that the bias-corrected weighted-logistic system inherits
the structural guarantees that motivate the product-of-logistics
framework of \cite{belgacem2025exploring,belgacem2026logistic}: global
existence, $C^{\infty}$-smoothness, uniform boundedness, and an
explicit Lipschitz bound of the same form as
\cite[Thm.~1]{belgacem2025exploring}.

\begin{theorem}[Global existence, smoothness, and boundedness]
\label{thm:well_posedness}
Consider the bias-corrected weighted-logistic ODE system
\begin{equation}
\dot{x}_i \;=\; F_i(\bx)
\;:=\; \kappa_i\,\fwgtbc(\bx) - \gamma_i\,x_i,
\qquad i = 1,\ldots,n,
\label{eq:F_def}
\end{equation}
with $\fwgtbc$ defined by \eqref{eq:bcw_model} and parameters
$\kappa_i,\gamma_i,\lambda > 0$, $\theta_{ij}>0$, $w_{ij}\in\R$.
Then:
\begin{enumerate}\itemsep1pt
\item[(a)] (\emph{Smoothness}) The right-hand side
$\mathbf{F}:\R^n\to\R^n$ is globally $C^{\infty}$.
\item[(b)] (\emph{Uniform Lipschitz bound}) The Jacobian
$\partial F_i/\partial x_l = \kappa_i\,\partial\fwgtbc/\partial x_l -
\gamma_i\delta_{il}$ is uniformly bounded:
\begin{equation}
\bigl|\partial F_i/\partial x_l(\bx)\bigr|
\;\le\; \tfrac{\kappa_i\lambda}{4}\,|w_{il}|
        \;+\; \gamma_i\,\delta_{il},
\qquad \forall\,\bx\in\R^n,
\label{eq:jacobian_bound}
\end{equation}
and the row-wise $\ell^{\infty}\!\to\!\ell^{\infty}$ Lipschitz constant
of $\mathbf{F}$ satisfies, under the convention $|w_{ij}|=1$ of
\Cref{rem:magnitude_rescaling},
\begin{equation}
L_F \;\le\; M \;:=\; \max_{i=1,\ldots,n}\!\Bigl(\kappa_i\,
\tfrac{\lambda\,m_i}{4} + \gamma_i\Bigr).
\label{eq:lipschitz_M}
\end{equation}
\item[(c)] (\emph{Global unique solution}) For every initial condition
$\bx(0)\in\R^n$, the system admits a unique solution $\bx(\cdot)\in
C^{\infty}([0,\infty);\R^n)$.
\item[(d)] (\emph{Positively invariant box}) The hyper-rectangle
$\Omega \;:=\; \prod_{i=1}^n \bigl[0,\kappa_i/\gamma_i\bigr]\subset
\R^n_{\ge 0}$ is positively invariant under \eqref{eq:F_def}; every
trajectory starting in $\Omega$ remains in $\Omega$ for all $t\ge 0$.
\end{enumerate}
\end{theorem}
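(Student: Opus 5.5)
The plan is to write $\fwgtbc = \sigma\circ g_i$ with $\sigma(u)=1/(1+e^{-u})$ and $g_i(\bx)=\lambda(S_i(\bx)+b_i)$, an affine map of $\bx$. Each part then reduces to an elementary property of $\sigma$ or to a standard ODE theorem.

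For (a), $\sigma$ is real-analytic on $\R$ and $g_i$ is affine, so $\fwgtbc$ is $C^\infty$. Adding the linear term $-\gamma_i x_i$ keeps $F_i$ $C^\infty$.

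For (b), the chain rule gives
\[
\partial \fwgtbc/\partial x_l=\lambda w_{il}\,\sigma'(g_i)=\lambda w_{il}\,\fwgtbc(1-\fwgtbc).
\]
Since $\fwgtbc(1-\fwgtbc)\le 1/4$, we get $|\partial F_i/\partial x_l|\le \kappa_i\lambda|w_{il}|/4+\gamma_i\delta_{il}$, which is \eqref{eq:jacobian_bound}. For the $\ell^\infty$ operator norm, I will bound the maximal row sum
\[
\sum_l|\partial F_i/\partial x_l|\le \frac{\kappa_i\lambda}{4}\sum_l|w_{il}|+\gamma_i .
\]
Under $|w_{il}|=1$ on exactly $m_i$ entries, this equals $\kappa_i\lambda m_i/4+\gamma_i$. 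Taking the maximum over $i$ and applying the mean value inequality along segments gives $\|\mathbf F(\bx)-\mathbf F(\mathbf y)\|_\infty\le M\|\bx-\mathbf y\|_\infty$, so $L_F\le M$.

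For (c), global Lipschitz continuity and Picard--Lindelöf give local existence and uniqueness. Linear growth, $\|\mathbf F(\bx)\|\le\|\mathbf F(0)\|+M\|\bx\|$, together with Grönwall's inequality rules out finite-time blow-up, so solutions exist on $[0,\infty)$. Because $\mathbf F$ is $C^\infty$, the solution is $C^\infty$ in $t$: bootstrap from $\dot\bx=\mathbf F(\bx)$.

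For (d), I will use the strict bounds $0<\fwgtbc<1$. On the face $x_i=0$ we have $F_i=\kappa_i\fwgtbc>0$, and on the face $x_i=\kappa_i/\gamma_i$ we have $F_i=\kappa_i(\fwgtbc-1)<0$. The vector field therefore points strictly inward on every face of $\Omega$.

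The only delicate step is turning this boundary condition into invariance rigorously. I will argue componentwise with a first-exit time. Suppose a trajectory starting in $\Omega$ leaves it, and let $t^*$ be the supremum of times up to which it stays in $\Omega$. At $t^*$ the point lies on some face, say $x_i(t^*)=0$ with $\dot x_i(t^*)>0$. Then $x_i>0$ on $(t^*,t^*+\varepsilon)$, and the same holds for any other active face, which contradicts the definition of $t^*$. Corners are handled the same way, since the strict inequalities hold simultaneously for each active coordinate. Alternatively, I can cite Nagumo's tangency theorem for closed convex sets, using uniqueness from (c).

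Since the estimates are elementary, I expect the main obstacle to be stating (d) cleanly, not any computation.
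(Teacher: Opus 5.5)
Your proposal is correct and follows essentially the same route as the paper. Part (a) uses $\sigma$ composed with an affine map, part (b) uses the bound $\sigma'\le 1/4$ with a row-sum estimate, part (c) uses Picard--Lindelöf under a global Lipschitz constant, and part (d) checks the sign of $F_i$ face by face. Your version of (d) is slightly more careful than the paper's: you use the strict bounds $0<\fwgtbc<1$ (valid because $2^{m_i}-1\ge 1$) together with a first-exit-time argument, whereas the paper's non-strict ``non-outgoing'' check would need a Nagumo-type argument to conclude invariance.
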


\begin{proof}
\textbf{(a)} $\fwgtbc(\bx) = \sigma(\lambda S_i(\bx) + \lambda b_i)$
is a composition of the $C^{\infty}$ logistic $\sigma$ with an affine
function, hence $C^{\infty}$. So is each $F_i$. \textbf{(b)} A direct
computation using $\sigma'(z)\le 1/4$ gives
$|\partial\fwgtbc/\partial x_l| = |w_{il}|\lambda\,\sigma'(\cdot)\le
|w_{il}|\lambda/4$. Under $|w_{il}|=1$, the row-sum
$\sum_l |\partial F_i/\partial x_l|\le \kappa_i\lambda m_i/4 + \gamma_i
\le M$, yielding the $\ell^{\infty}\!\to\!\ell^{\infty}$ Lipschitz bound.
\textbf{(c)} Global Lipschitz right-hand side guarantees global
existence and uniqueness by the Picard--Lindelöf theorem on every
finite interval, extended to $[0,\infty)$ by continuation.
\textbf{(d)} For $x_i = 0$: $\dot{x}_i = \kappa_i\fwgtbc(\bx)\ge 0$,
so the lower face $\{x_i=0\}$ is non-out-going (and in fact strictly
inward by \Cref{cor:no_shutdown}). For $x_i = \kappa_i/\gamma_i$:
$\dot{x}_i = \kappa_i\fwgtbc(\bx)-\kappa_i\le \kappa_i\cdot 1-\kappa_i = 0$,
using $\fwgtbc\le 1$, so the upper face is also non-out-going.
\end{proof}

\Cref{thm:well_posedness} establishes that the bias-corrected
weighted-logistic framework inherits the global mathematical
guarantees of the product-of-logistics framework
\cite{belgacem2025exploring,belgacem2026logistic}: the explicit
Lipschitz constant \eqref{eq:lipschitz_M} matches the form
$L_F \le M = \max_i(\kappa_i\sum_j L_i^j + \gamma_i)$ of
\cite[Thm.~1]{belgacem2025exploring}, with each per-regulator
Lipschitz contribution $L_i^j = \lambda/4$ collapsed into the row-sum
$\lambda m_i/4$. The positively invariant box $\Omega$ matches the
biologically relevant concentration range $[0,\kappa_i/\gamma_i]$ of
each gene's saturation capacity.

\begin{corollary}[No expression shutdown]
\label{cor:no_shutdown}
At any state $\bx\in\R^n$ (in particular at $\bx = \mathbf{0}$),
the regulatory function $\fwgtbc$ is strictly positive:
\begin{equation}
\fwgtbc(\bx) \;>\; 0\qquad\forall\,\bx\in\R^n.
\label{eq:no_shutdown}
\end{equation}
At the zero state, $\fwgtbc(\mathbf{0}) =
1/\bigl(1+(2^{m_i}-1)\,e^{\lambda\sum_{j}w_{ij}\theta_{ij}}\bigr) > 0$;
consequently $\dot{x}_i\big|_{\bx=\mathbf{0}} = \kappa_i\,
\fwgtbc(\mathbf{0}) > 0$ and the zero-state is non-absorbing.
\end{corollary}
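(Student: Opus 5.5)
The plan is to argue directly from the compact form \eqref{eq:bcw_compact} of \Cref{def:bcw_model}. For any $\bx\in\R^n$ the quantity $S_i(\bx)$ is a finite real number, because it is an affine function of $\bx$. Hence $e^{-\lambda S_i(\bx)}\in(0,\infty)$. Since $m_i\ge 1$, we also have $2^{m_i}-1\ge 1$, so the coefficient $2^{m_i}-1$ is finite and positive. The denominator $1+(2^{m_i}-1)e^{-\lambda S_i(\bx)}$ therefore lies in $(1,\infty)$. Its reciprocal then lies in $(0,1)$, which gives \eqref{eq:no_shutdown}, together with the upper bound already used in \Cref{thm:well_posedness}(d).

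Equivalently, $\fwgtbc=\sigma(\lambda S_i+\lambda b_i)$ is the logistic function evaluated at a finite argument. Since $\sigma$ maps $\R$ into $(0,1)$, positivity follows at once.

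Next I specialise to $\bx=\mathbf{0}$. There $S_i(\mathbf{0})=\sum_j w_{ij}(0-\theta_{ij})=-\sum_j w_{ij}\theta_{ij}$. This gives $e^{-\lambda S_i(\mathbf{0})}=e^{\lambda\sum_j w_{ij}\theta_{ij}}$, and substituting into \eqref{eq:bcw_compact} yields the stated closed form. That expression is strictly positive by the first step.

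Finally, I evaluate \eqref{eq:F_def} at $\bx=\mathbf{0}$. The degradation term $-\gamma_i x_i$ vanishes, so $\dot x_i=\kappa_i\fwgtbc(\mathbf{0})>0$ because $\kappa_i>0$. Thus every component of the vector field points strictly into the open orthant at the origin. No trajectory can remain at $\mathbf{0}$, so the origin is not an equilibrium, hence not absorbing.

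The same reasoning applies on each face $\{x_i=0\}$, where $\dot x_i=\kappa_i\fwgtbc(\bx)>0$. This yields the strict inwardness claimed in the proof of \Cref{thm:well_posedness}(d).

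There is no real obstacle here. The only point needing care is that positivity is uniform in sign but not bounded away from zero. Indeed $\fwgtbc(\mathbf{0})$ can be exponentially small when $\lambda\sum_j w_{ij}\theta_{ij}$ is large. The statement only asserts strict positivity, so no lower bound is needed.
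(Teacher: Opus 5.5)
Your proposal is correct and follows essentially the same route as the paper: strict positivity of the denominator in \eqref{eq:bcw_compact} (equivalently, $\sigma$ maps $\R$ into $(0,1)$), then direct substitution of $S_i(\mathbf{0}) = -\sum_j w_{ij}\theta_{ij}$. Your explicit evaluation of $\dot x_i$ at the origin, using $\kappa_i>0$ and the vanishing degradation term, simply spells out the final clause that the paper leaves implicit.
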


\begin{proof}
The denominator of \eqref{eq:bcw_compact} is positive and finite for
every $\bx\in\R^n$ since $e^{\lambda S_i(\bx)} > 0$, so $\fwgtbc(\bx)
>0$. At $\bx=\mathbf{0}$, $S_i(\mathbf{0}) = -\sum_j w_{ij}\theta_{ij}$
gives $\fwgtbc(\mathbf{0}) = 1/(1+(2^{m_i}-1)e^{\lambda\sum_j
w_{ij}\theta_{ij}})$.
\end{proof}

\Cref{cor:no_shutdown} parallels the basal-expression property of the
product-of-logistics framework~\cite{belgacem2026logistic}: like the
product (and unlike Hill-based models), the bias-corrected weighted
formulation has \emph{strictly positive basal output} and is therefore
free of the off-state trap that pathologically characterises Hill
models with non-integer cooperativity coefficients. This property is
essential for the bistable-escape, expression-recovery, and
basal-transcription analyses developed in~\cite{belgacem2026logistic}.

\section{Comparison with the Samuilik Weighted-Sum Formulation}
\label{sec:samuilik_comparison}

The bias-corrected weighted-logistic formulation \eqref{eq:bcw_model}
differs structurally from the Samuilik weighted-sum
model~\eqref{eq:samuilik_def} of \Cref{subsec:samuilik_def}. The
structural shortcomings of \eqref{eq:samuilik_def} relative to
product-of-logistics models were previously catalogued
in~\cite[\S 8.3]{belgacem2025exploring} along two complementary axes:
a critical-point pathology for repressors
(\cite[\S 8.3.1]{belgacem2025exploring}) and a threshold-scaling
pathology (\cite[\S 8.3.2]{belgacem2025exploring}). We extend that
analysis in two directions. First, we show that the bias-corrected
weighted-logistic formulation \emph{retains} the
single-sigmoid-per-gene structure of \eqref{eq:samuilik_def} (the very
feature that motivates weighted-sum models in the first place) while
\emph{resolving} both pathologies through regulator-specific
thresholds and the combinatorial bias $b_i$. Second, we make the
threshold-aggregation argument quantitative, showing that the standard
prescription $\theta_i^{\mathrm{S}} = \tfrac{1}{2}\sum_j
w_{ij}^{\mathrm{S}}$ collapses to a pure inflection condition with no
characteristic biological scale for canonical mixed regulation
(\Cref{subsec:mixed}).

\subsection{Threshold structure}
\label{subsec:threshold}

The two formulations differ in the placement and interpretation of
thresholds:
\begin{itemize}\itemsep2pt
\item In \eqref{eq:bcw_model}, each interaction $(i,j)$ has its own
threshold $\theta_{ij} > 0$, directly identifiable as the regulator
concentration at which interaction $(i,j)$ exerts half-maximal effect
on gene $i$. This quantity is independently determined from
single-regulator dose-response measurements (binding affinity $K_d$,
half-maximal effective concentration $\mathrm{EC}_{50}$, half-maximal
inhibitory concentration $\mathrm{IC}_{50}$).

\item In \eqref{eq:samuilik_def}, gene $i$ has a single shared threshold
$\theta_i^{\mathrm{S}}$ pooled across all regulators, computed as a
weighted aggregate. The biological meaning of this aggregate is opaque
when multiple regulators with disparate affinities are involved, and
the numerical value depends on the choice of weights, which are
themselves not directly measurable.
\end{itemize}

The structural distinction propagates to identifiability: thresholds
$\theta_{ij}$ in the bias-corrected weighted formulation can be
estimated from single-regulator experiments, while the Samuilik
threshold $\theta_i^{\mathrm{S}}$ requires simultaneous fitting of all
weights and the shared threshold from a single multi-regulator
dose-response surface, an inverse problem that is typically
ill-conditioned and admits weight-threshold tradeoffs that limit
parameter identifiability.

\subsection{Repression pathology of the Samuilik model}
\label{subsec:repression_pathology}

In the Samuilik model \eqref{eq:samuilik_def}, repression is encoded by a negative weight
$w<0$ inside an \emph{increasing} sigmoid:
\begin{equation}
f^{\mathrm{S},-}(x) \;=\; \frac{1}{1+e^{-\mu(wx-\theta)}}, \qquad w<0.
\label{eq:samuilik_repression}
\end{equation}
The midpoint of the sigmoid (where the output equals $1/2$ and the
slope is steepest) satisfies $wx_c - \theta = 0$, so
\[
x_c \;=\; \frac{\theta}{w} \;<\; 0
\qquad\text{when } \theta>0,\,w<0.
\]
The transition therefore occurs at a \emph{negative} concentration,
outside the biologically admissible domain $x\ge 0$. Concretely, with
the canonical choice $w=-1$, $\mu=4$, $\theta=3$, the function reduces
to $f^{\mathrm{S},-}(x) = 1/(1+e^{4(x+3)})$, satisfying
$f^{\mathrm{S},-}(x)\le 1/(1+e^{12})\approx 6\times 10^{-6}$ for
all $x\ge 0$. The function is essentially constant at zero throughout
the biologically relevant domain and never undergoes its sigmoidal
transition.

By contrast, the decreasing logistic $f^- (x,\theta,\lambda) =
1/(1+e^{-\lambda(\theta-x)})$ employed by the product
\eqref{eq:f_prod_def} and embedded in the bias-corrected weighted
form \eqref{eq:bcw_model} via the additive split
\eqref{eq:S_split} has its midpoint at $x = \theta>0$, the
biologically interpretable inhibition midpoint $\mathrm{IC}_{50}$,
and closely approximates the decreasing Hill function
$h^-(x,\theta,n) = \theta^n/(\theta^n+x^n)$ for matched parameters
$\lambda = n/\theta$ (see~\cite{belgacem2025exploring} for the
parameter-matching analysis).

\subsection{Mixed activation and repression}
\label{subsec:mixed}

Consider gene $i$ regulated by one activator $x_1$ ($w_{i1}=+1$) and
one repressor $x_2$ ($w_{i2}=-1$). In the bias-corrected weighted
framework \eqref{eq:bcw_model}, the regulatory function is
\begin{align}
\fwgtbc(x_1,x_2)
   &\;=\; \frac{1}{1+\exp\!\bigl(-\lambda\bigl[(x_1-\theta_{i1}) -
                                  (x_2-\theta_{i2}) -
                                  \lambda^{-1}\log 3\bigr]\bigr)}
\notag\\[3pt]
   &\;=\; \frac{1}{1+3\,\exp\!\bigl(-\lambda(x_1-x_2 +\theta_{i2}-
                                                  \theta_{i1})\bigr)}.
\label{eq:mixed_ours}
\end{align}
The thresholds $\theta_{i1},\theta_{i2}$ enter independently and are
determined from single-regulator dose-response curves for the
activator and the repressor.

In the Samuilik formulation, the same circuit reads
\begin{equation}
\fS(x_1,x_2) \;=\;
\frac{1}{1+\exp\!\bigl(-\mu_i(w_{i1}^{\mathrm{S}}x_1 +
                              w_{i2}^{\mathrm{S}}x_2 -
                              \theta_i^{\mathrm{S}})\bigr)},
\qquad w_{i1}^{\mathrm{S}}>0,\;w_{i2}^{\mathrm{S}}<0.
\label{eq:mixed_samuilik}
\end{equation}
With the standard choice $w_{i1}^{\mathrm{S}}=+1$,
$w_{i2}^{\mathrm{S}}=-1$ and the threshold prescription
$\theta_i^{\mathrm{S}} = \tfrac{1}{2}\sum_j w_{ij}^{\mathrm{S}} = 0$,
the function reduces to
$\fS(x_1,x_2) = 1/(1+e^{-\mu_i(x_1-x_2)})$. The half-maximal output
$\fS = 1/2$ is attained on the line $x_1=x_2$ regardless of any
characteristic concentration; biologically, this asserts that the
regulatory midpoint occurs at \emph{any} concentration at which the
activator and repressor are equal, in contradiction with the standard
biophysical picture in which the midpoint is anchored to specific
binding affinities $K_d^{\mathrm{act}}$ and $K_d^{\mathrm{rep}}$.

By contrast, \eqref{eq:mixed_ours} attains $\fwgtbc=1/4$ at the
critical point $x_1=\theta_{i1}, x_2=\theta_{i2}$, a configuration
anchored to two measurable biological scales.

\Cref{tab:comparison} consolidates the structural comparison.

\begin{table}[t]
\centering
\renewcommand{\arraystretch}{1.3}
\begin{tabularx}{\linewidth}{@{}lXXX@{}}
\toprule
\textbf{Property} & \textbf{Product-of-logistics} & \textbf{Bias-corrected weighted} & \textbf{Samuilik weighted-sum} \\
\midrule
Equation                & \eqref{eq:f_prod_def}                              & \eqref{eq:bcw_model}                                & \eqref{eq:samuilik_def}                                \\
Sigmoids per gene       & $m_i$ (one per regulator)                          & $1$                                                  & $1$                                                     \\
Threshold structure     & Regulator-specific $\theta_{ij}>0$                 & Regulator-specific $\theta_{ij}>0$                  & Single shared $\theta_i^{\mathrm{S}}$                  \\
Threshold meaning       & $\mathrm{EC}_{50}/\mathrm{IC}_{50}$ of regulator $j$ on $i$ & Idem                                       & Weighted aggregate, opaque                              \\
Direction encoding      & Multiplicative (form: $f^+,f^-$)                   & Additive (sign of $w_{ij}\in\R$ in exponent)        & Additive ($w_{ij}^{\mathrm{S}}\in\R$ in exponent)      \\
Repression midpoint     & $x_c = \theta_{ij}>0$                              & $x_c = \theta_{ij}>0$ (via additive split)          & $x_c = \theta/w<0$ (negative, pathological)            \\
Critical-point value    & $(1/2)^{m_i}$                                      & $(1/2)^{m_i}$                                       & $1/2$                                                  \\
Critical-point slope    & $\lambda/2^{m_i+1}$                                & $\lambda(2^{m_i}-1)/4^{m_i}$                        & $\mu_i/4$                                              \\
Slope ratio to product  & $1$                                                & $2-2^{1-m_i}$                                       & $2^{m_i+1}\mu_i/(4\lambda)$                            \\
Curvature $\partial^2/\partial x_l^2|_{\rm cp}$ & $0$                         & $\lambda^2(2^{m_i}-1)(2^{m_i}-2)/8^{m_i}$           & $0$                                                    \\
Identifiability         & Modular (single-regulator)                         & Modular (single-regulator)                          & Joint (multi-regulator inverse)                        \\
Bifurcation analysis    & Closed-form (symmetry $\sigma''(\theta)=0$)        & Quadratic-active for $m_i\ge 2$                     & Closed-form for $m_i=1$ only                           \\
\bottomrule
\end{tabularx}
\caption{Structural comparison of the three logistic-based GRN
formulations. The bias-corrected weighted formulation
\eqref{eq:bcw_model} retains the threshold structure and biological
identifiability of the product-of-logistics while compressing the
$m_i$ sigmoids into a single one. The price for this compression is a
loss of the structural symmetry $\sigma''(\theta)=0$ that drives
analytical bifurcation theorems for the product (last row).}
\label{tab:comparison}
\end{table}

\section{Functional Equivalence Analysis}
\label{sec:equivalence}

This section contains the principal mathematical results of the paper.
We show that the bias-corrected weighted formulation \eqref{eq:bcw_model}
matches the product-of-logistics \eqref{eq:f_prod_def} at three
biologically meaningful reference configurations
(\Cref{thm:three_point}); we derive an exact algebraic identity for
the discrepancy between the two formulations
(\Cref{thm:algebraic_identity}); we compute the closed-form ratio of
local sensitivities at the critical point
(\Cref{prop:slope_ratio}); we prove that the structural symmetry
$\sigma''(\theta)=0$ enjoyed by the product formulation is broken by
the bias correction for $m_i\ge 2$
(\Cref{prop:curvature_mismatch}); and we prove that no
state-independent constant shift can yield global equivalence
(\Cref{prop:no_global_eq}).

\subsection{Three-point matching}
\label{subsec:three_point}

\begin{theorem}[Three-point matching]
\label{thm:three_point}
The bias-corrected weighted formulation $\fwgtbc$ in
\eqref{eq:bcw_model} matches the product-of-logistics $\fprod$ in
\eqref{eq:f_prod_def} at the following three configurations:

\textbf{(a) Critical point.}
At $\bx = \bx_i^{(\mathrm{cp})}$, i.e.\ $x_j = \theta_{ij}$ for every
$j\in\Acal_i\cup\Rcal_i$, both functions equal $(1/2)^{m_i}$.

\textbf{(b) Low-steepness limit.}
As $\lambda\to 0^+$, both functions tend uniformly on compact subsets
of $\R^n$ to $(1/2)^{m_i}$.

\textbf{(c) Saturation limits.}
As every individual sigmoid factor in \eqref{eq:f_prod_def} approaches
$1$ (full activation, no repression),
both $\fprod\to 1$ and $\fwgtbc\to 1$. Symmetrically, as every factor
approaches $0$, both functions tend to $0$.
\end{theorem}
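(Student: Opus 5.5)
The plan is to reduce each part to earlier results, working with the compact form \eqref{eq:bcw_compact}, $\fwgtbc = \sigma(\lambda S_i - \log(2^{m_i}-1))$, and the convention $|w_{ij}|=1$ of \Cref{rem:magnitude_rescaling}. Under this convention each product factor is $\sigma(u_j)$ with $u_j = \lambda w_{ij}(x_j-\theta_{ij})$, and $\lambda S_i = \sum_j u_j$.

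\textbf{Part (a).} This is \Cref{prop:bias_canonicity}(i) read in reverse. At $\bx_i^{(\mathrm{cp})}$ we have $S_i=0$, so
\[
\fwgtbc = \frac{1}{1+(2^{m_i}-1)} = 2^{-m_i},
\]
and \Cref{lem:critical_values} gives the product value $2^{-m_i}$.

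\textbf{Part (c).} I will identify ``every factor tends to $1$'' with $u_j\to+\infty$ for all $j$, using the strict monotonicity of $\sigma$, and ``every factor tends to $0$'' with $u_j\to-\infty$ for all $j$. Then \Cref{lem:limits}(b),(c) give $\fprod\to 1$ and $\fprod\to 0$, together with $\lambda S_i\to\pm\infty$. The only extra ingredient is that the bias $-\log(2^{m_i}-1)$ is a finite constant once $m_i$ is fixed. Hence $\sigma(\lambda S_i-\log(2^{m_i}-1))\to 1$ (respectively $0$), by the explicit formula $1/(1+(2^{m_i}-1)e^{-\lambda S_i})$. A subtlety is that the saturation may be reached as $\lambda\to\infty$ rather than through the $x_j$. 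This causes no trouble, since the logit shift does not depend on $\lambda$.

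\textbf{Part (b)} is the main obstacle, and it is where care is needed. The raw bias $b_i=-\lambda^{-1}\log(2^{m_i}-1)$ diverges as $\lambda\to0^+$, but the logit shift $\lambda b_i=-\log(2^{m_i}-1)$ is constant. On a compact $K$, $|\lambda S_i|\le\lambda\sup_K|S_i|\to0$ uniformly, as in the proof of \Cref{lem:limits}(a). Since $\sigma$ is $\tfrac14$-Lipschitz,
\[
\bigl|\fwgtbc-\sigma(-\log(2^{m_i}-1))\bigr|\le \tfrac{\lambda}{4}\sup_K|S_i|\to 0 .
\]
Also $\sigma(-\log(2^{m_i}-1))=2^{-m_i}$. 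Finally, \Cref{lem:limits}(a) gives $\fprod\to2^{-m_i}$ uniformly on $K$. For a quantitative rate in that step, I would bound $|\prod_j\sigma(u_j)-2^{-m_i}|$ by $\sum_j|\sigma(u_j)-\tfrac12|\le\tfrac{\lambda}{4}\sum_j\sup_K|x_j-\theta_{ij}|$, a telescoping estimate valid because every factor lies in $(0,1)$. Both functions are then within $O(\lambda)$ of $2^{-m_i}$ uniformly on $K$, which proves (b).
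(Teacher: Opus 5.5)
Your proposal is correct and follows essentially the same route as the paper: (a) via $S_i=0$ in \eqref{eq:bcw_compact}, (b) via the $\tfrac14$-Lipschitz bound on a compact set together with the $\lambda$-independent logit shift $-\log(2^{m_i}-1)$, and (c) by reducing to \Cref{lem:limits}(b)--(c) and noting that the bias term is a finite constant. Your explicit telescoping estimate for $\fprod$ spells out the ``analogous bound'' that the paper leaves implicit.
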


\begin{proof}
\textbf{(a)} At the critical point $S_i = 0$, so by
\eqref{eq:bcw_compact},
$\fwgtbc = 1/(1+(2^{m_i}-1)) = 1/2^{m_i}$, which matches $\fprod$ by
\Cref{rem:critical_point}.

\textbf{(b)} As $\lambda\to 0^+$ at fixed $\bx$, the exponent in
\eqref{eq:bcw_model} satisfies $-\lambda(S_i+b_i) = -\lambda S_i +
\log(2^{m_i}-1)\to\log(2^{m_i}-1)$. Therefore
\[
\fwgtbc(\bx) \;\to\; \frac{1}{1+e^{\log(2^{m_i}-1)}} \;=\;
\frac{1}{2^{m_i}} \;=\; \lim_{\lambda\to 0^+}\fprod(\bx),
\]
where the second equality uses \Cref{lem:limits}(a). Convergence is
uniform on compacts: fix a compact $K\subset\R^n$ and set
$M_K := \sup_{\bx\in K}\sum_{j=1}^n|w_{ij}|\,|x_j-\theta_{ij}|<\infty$.
Then $|\lambda S_i(\bx)|\le \lambda M_K$ for all $\bx\in K$. Both
$\fwgtbc$ and $\fprod$ depend on $\bx$ only through $\lambda S_i$ (and
the individual $\lambda(x_j-\theta_{ij})$ for the product), and the
sigmoidal Lipschitz bound $|\sigma'|\le 1/4$ gives
$|\fwgtbc(\bx) - 2^{-m_i}|\le \tfrac{1}{4}\lambda M_K$ and an analogous
bound for $\fprod$, both tending to zero uniformly on $K$ as
$\lambda\to 0^+$.

\textbf{(c)} Under the joint positive saturation condition $u_j\to
+\infty$ for every $j\in[m_i]$, $\lambda S_i = \sum_j u_j\to +\infty$,
hence $(2^{m_i}-1)e^{-\lambda S_i}\to 0$ and $\fwgtbc\to 1$. The
limit $\fprod\to 1$ follows from \Cref{lem:limits}(b). The case of
joint negative saturation $u_j\to -\infty$ is symmetric and gives
$\fprod,\fwgtbc\to 0$ by \Cref{lem:limits}(c).
\end{proof}

\Cref{thm:three_point} establishes that the bias-corrected weighted
formulation \emph{registers} with the product-of-logistics at the
three reference configurations: the half-input critical point that
characterises the regulatory threshold, the low-steepness limit that
characterises non-cooperative response, and the saturation limits
that characterise full activation/repression. The bias term
\eqref{eq:bcw_model} is the unique state-independent constant that
achieves the critical-point match (\Cref{prop:bias_canonicity});
since the low-steepness limit reduces algebraically to the same
condition $\sigma(\lambda b_i) = 1/2^{m_i}$ and the saturation
limits hold automatically for any constant bias, the critical-point
condition therefore implies the full three-point match.

\subsection{Exact algebraic discrepancy formula}
\label{subsec:discrepancy_formula}

We now express the discrepancy $\fprod - \fwgtbc$ in closed form. The
key technical device is a change of variables that absorbs the weight
sign:
\begin{equation}
u_j \;:=\; w_{ij}\,\lambda(x_j - \theta_{ij})
\;=\;
\begin{cases}
\;\;\lambda(x_j-\theta_{ij}), & j\in\Acal_i,\\
-\lambda(x_j-\theta_{ij}) = \lambda(\theta_{ij}-x_j),
                              & j\in\Rcal_i,\\
\;\;0,                         & w_{ij}=0.
\end{cases}
\label{eq:u_def}
\end{equation}
Under this convention, both activators and repressors become
sigmoid factors of the form $\sigma(u_j) = 1/(1+e^{-u_j})$ in the
product formulation:
\begin{equation}
\fprod \;=\; \prod_{j:\, w_{ij}\neq 0} \sigma(u_j)
\;=\; \prod_{j:\, w_{ij}\neq 0} \frac{e^{u_j}}{1+e^{u_j}}
\;=\; \frac{e^{U_i}}{\prod_{j:\,w_{ij}\neq 0}(1+e^{u_j})},
\label{eq:f_prod_in_u}
\end{equation}
where $U_i := \sum_{j:\,w_{ij}\neq 0} u_j = \lambda S_i$. Similarly,
the bias-corrected weighted form rewrites as
\begin{equation}
\fwgtbc \;=\; \frac{e^{U_i}}{(2^{m_i}-1) + e^{U_i}}.
\label{eq:f_bc_in_u}
\end{equation}

Let $[m_i] = \{1,\ldots,m_i\}$ denote the index set of the regulators
of gene $i$ (re-indexed for notational simplicity). For a subset
$T\subseteq[m_i]$ write $u_T := \sum_{j\in T}u_j$ and let
$\mathcal{T}_i := \{T\subseteq [m_i] : T\neq\emptyset,\,T\neq [m_i]\}$
denote the collection of proper non-empty subsets of $[m_i]$. The
cardinality of $\mathcal{T}_i$ is $2^{m_i}-2$.

\begin{theorem}[Algebraic identity for the discrepancy]
\label{thm:algebraic_identity}
With $u_j$ as in \eqref{eq:u_def} and $U_i = \sum_j u_j = \lambda S_i$,
\begin{equation}
\eqbox{
\fprod(\bx) - \fwgtbc(\bx)
\;=\;
-\;\frac{e^{U_i}\;\displaystyle\sum_{T\in\mathcal{T}_i}
        \bigl(e^{u_T}-1\bigr)}
       {\displaystyle
        \Bigl[\prod_{j\in[m_i]}(1+e^{u_j})\Bigr]\,
        \bigl[(2^{m_i}-1) + e^{U_i}\bigr]}.}
\label{eq:algebraic_identity}
\end{equation}
\end{theorem}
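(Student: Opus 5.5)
Both $\fprod$ and $\fwgtbc$ share the numerator $e^{U_i}$ in the representations \eqref{eq:f_prod_in_u} and \eqref{eq:f_bc_in_u}, so the plan is to subtract over a common denominator. Writing $P_i := \prod_{j\in[m_i]}(1+e^{u_j})$, we get
\[
\fprod - \fwgtbc \;=\; e^{U_i}\,\frac{\bigl[(2^{m_i}-1)+e^{U_i}\bigr] - P_i}{P_i\,\bigl[(2^{m_i}-1)+e^{U_i}\bigr]}.
\]
The denominator is already the one in \eqref{eq:algebraic_identity}. The whole proof therefore reduces to showing that the numerator bracket equals $-\sum_{T\in\mathcal{T}_i}(e^{u_T}-1)$.

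The key step is the subset expansion of the product. Distributing $\prod_{j}(1+e^{u_j})$ chooses either $1$ or $e^{u_j}$ from each factor, and exponentials multiply additively, so
\[
P_i \;=\; \sum_{T\subseteq[m_i]} e^{u_T},
\]
with $u_\emptyset=0$ (giving the term $1$) and $u_{[m_i]}=U_i$ (giving $e^{U_i}$). Separating the empty set and the full set leaves $P_i = 1 + e^{U_i} + \sum_{T\in\mathcal{T}_i} e^{u_T}$.

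Next we use the cardinality count $|\mathcal{T}_i| = 2^{m_i}-2$ to rewrite $2^{m_i}-1 = 1 + \sum_{T\in\mathcal{T}_i} 1$. Substituting both expressions into the numerator bracket, the constants $1$ and the terms $e^{U_i}$ cancel, leaving exactly $-\sum_{T\in\mathcal{T}_i}(e^{u_T}-1)$. This gives \eqref{eq:algebraic_identity}.

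There is no real analytic obstacle; the only delicate points are bookkeeping. First, $P_i$ ranges only over regulators ($w_{ij}\neq 0$); indices with $w_{ij}=0$ have $u_j=0$ and must not enter the product, since they would contribute spurious factors of $2$. Second, the constant $2^{m_i}-1$ has to be split precisely as $1+|\mathcal{T}_i|$ so that each proper subset is paired with a $-1$.

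Finally, as sanity checks we verify two cases. For $m_i=1$, $\mathcal{T}_i=\emptyset$ and the discrepancy vanishes, consistent with $b_i=0$. At the critical point all $u_j=0$, so every summand $e^{u_T}-1$ vanishes, agreeing with \Cref{thm:three_point}(a).
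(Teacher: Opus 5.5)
Your proposal is correct and follows the paper's proof essentially step for step. Like the paper, you subtract over the common denominator $\prod_j(1+e^{u_j})\,[(2^{m_i}-1)+e^{U_i}]$, expand the product as $\sum_{T\subseteq[m_i]}e^{u_T}$, split off the empty and full subsets, and use $|\mathcal{T}_i|=2^{m_i}-2$ to absorb the constant.
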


\begin{proof}
By \eqref{eq:f_prod_in_u}--\eqref{eq:f_bc_in_u},
\[
\fprod-\fwgtbc \;=\;
e^{U_i}\!\left[
\frac{1}{\prod_j(1+e^{u_j})} \;-\; \frac{1}{(2^{m_i}-1)+e^{U_i}}
\right]
\;=\;
\frac{e^{U_i}\,\bigl[(2^{m_i}-1)+e^{U_i} - \prod_j(1+e^{u_j})\bigr]}
     {\prod_j(1+e^{u_j})\;\bigl[(2^{m_i}-1)+e^{U_i}\bigr]}.
\]
Expand the product using the elementary identity
\[
\prod_{j=1}^{m_i}(1+e^{u_j})
\;=\; \sum_{T\subseteq[m_i]} e^{u_T}
\;=\; 1 \;+\; \sum_{T\in\mathcal{T}_i} e^{u_T} \;+\; e^{U_i},
\]
where the last equality separates the empty subset $T=\emptyset$
(which contributes $e^0 = 1$) and the full subset $T=[m_i]$ (which
contributes $e^{u_{[m_i]}} = e^{U_i}$) from the $2^{m_i}-2$ proper
non-empty subsets. Substituting,
\[
(2^{m_i}-1)+e^{U_i} - \prod_j(1+e^{u_j})
\;=\; (2^{m_i}-1) + e^{U_i} - 1 - \!\!\sum_{T\in\mathcal{T}_i}e^{u_T} - e^{U_i}
\;=\; (2^{m_i}-2) - \sum_{T\in\mathcal{T}_i}e^{u_T}.
\]
Since $|\mathcal{T}_i| = 2^{m_i}-2$, this rewrites as
$-\sum_{T\in\mathcal{T}_i}\bigl(e^{u_T}-1\bigr)$, completing the
proof.
\end{proof}

The identity \eqref{eq:algebraic_identity} provides the
\emph{additive} form of the discrepancy. A multiplicative
counterpart is obtained directly from \eqref{eq:f_prod_in_u} and
\eqref{eq:f_bc_in_u}:

\begin{corollary}[Ratio form of the discrepancy]
\label{cor:ratio}
With $u_j$, $U_i = \lambda S_i$ as in \Cref{thm:algebraic_identity},
\begin{equation}
\frac{\fprod(\bx)}{\fwgtbc(\bx)}
\;=\;
\frac{(2^{m_i}-1) + e^{\lambda S_i}}
     {\displaystyle\prod_{j\in [m_i]}\bigl(1+e^{u_j}\bigr)}.
\label{eq:ratio_identity}
\end{equation}
This ratio equals $1$ at the critical point ($\bu=\mathbf{0}$) and as
$u_j\to+\infty$ for all $j$ (joint positive saturation), and tends to
$2^{m_i}-1$ as $u_j\to-\infty$ for all $j$ (joint negative saturation).
\end{corollary}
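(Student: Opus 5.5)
The plan is to divide the two closed forms \eqref{eq:f_prod_in_u} and \eqref{eq:f_bc_in_u} and then evaluate three limits. Since both numerators equal $e^{U_i}$, the quotient $\fprod/\fwgtbc$ is the denominator of $\fwgtbc$ divided by the denominator of $\fprod$. This is exactly \eqref{eq:ratio_identity}, using $U_i=\lambda S_i$. The division is legitimate because both functions are strictly positive on $\R^n$ (\Cref{cor:no_shutdown}, and positivity of every factor of the product).

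\textbf{Critical point.} Here $\bu=\mathbf{0}$, so $e^{\lambda S_i}=1$ and the ratio is
\[
\frac{2^{m_i}-1+1}{\prod_j 2}=\frac{2^{m_i}}{2^{m_i}}=1.
\]

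\textbf{Joint positive saturation.} I would not take the limit of numerator and denominator separately, since both diverge. Instead, rewrite the denominator with the subset expansion from the proof of \Cref{thm:algebraic_identity}:
\[
\prod_j(1+e^{u_j}) \;=\; 1+\sum_{T\in\mathcal{T}_i}e^{u_T}+e^{U_i}.
\]
Dividing numerator and denominator by $e^{U_i}$ gives
\[
\frac{(2^{m_i}-1)e^{-U_i}+1}{e^{-U_i}+\sum_{T\in\mathcal{T}_i}e^{u_T-U_i}+1}.
\]
For each proper $T$, the quantity $u_T-U_i=-u_{[m_i]\setminus T}$ is minus a sum over a non-empty complementary set, so it tends to $-\infty$. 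Hence every correction term vanishes and the ratio tends to $1$. This step is the only one needing care: it is where the \emph{joint} hypothesis, rather than merely $U_i\to\infty$, is used. It is consistent with the remark following \Cref{lem:limits}.

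\textbf{Joint negative saturation.} Every $e^{u_j}\to0$, so $e^{U_i}\to0$ as well. The numerator tends to $2^{m_i}-1$ and the denominator to $1$, so the ratio tends to $2^{m_i}-1$. This is the multiplicative counterpart of the additive statement that both functions tend to $0$ in \Cref{thm:three_point}(c).
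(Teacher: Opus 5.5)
Your proof is correct and follows the paper's own argument: divide \eqref{eq:f_prod_in_u} by \eqref{eq:f_bc_in_u}, then evaluate at $\bu=\mathbf{0}$ and in the two joint-saturation limits. The only difference is that the paper simply asserts $\prod_j(1+e^{u_j})\sim e^{U_i}$ under joint positive saturation, whereas you justify it explicitly through the subset expansion (equivalently, $\prod_j(1+e^{u_j})/e^{U_i}=\prod_j(1+e^{-u_j})\to 1$).
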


\begin{proof}
Dividing \eqref{eq:f_prod_in_u} by \eqref{eq:f_bc_in_u} gives
\eqref{eq:ratio_identity}. At $\bu = \mathbf{0}$: numerator
$(2^{m_i}-1)+1 = 2^{m_i}$, denominator $2^{m_i}$, ratio $1$. As
$u_j\to+\infty$ all, both numerator and denominator are $\sim e^{U_i}$,
ratio $\to 1$. As $u_j\to -\infty$ all, $e^{U_i}\to 0$ and
$\prod(1+e^{u_j})\to 1$, giving ratio $\to (2^{m_i}-1)/1 = 2^{m_i}-1$.
\end{proof}

\Cref{cor:ratio} is the natural form for \emph{relative-error} bounds:
the ratio characterises by what factor the product overshoots or
undershoots the bias-corrected weighted formulation, complementing
\eqref{eq:algebraic_identity}, which quantifies absolute pointwise
deviation. In the negative-saturation regime both functions decay to
zero exponentially at the same rate $\sim e^{m_i u}$, but the
product has a leading-order coefficient $(2^{m_i}-1)$ times larger
than the bias-corrected weighted form: $\fprod/\fwgtbc\to (2^{m_i}-1)$
captures the multiplicative gap between the two formulations'
off-state output. For $m_i = 2,3,4$ the asymptotic ratios are
$3,7,15$ respectively, a substantial relative discrepancy that should
be accounted for in controller designs that operate at low
expression levels.

The identity \eqref{eq:algebraic_identity} provides a complete
characterisation of the discrepancy. Three immediate consequences are
worth recording.

\begin{corollary}[Vanishing set]
\label{cor:vanishing}
$\fprod(\bx) = \fwgtbc(\bx)$ if and only if
\begin{equation}
\Phi_i(\bu) \;:=\; \sum_{T\in\mathcal{T}_i}\bigl(e^{u_T}-1\bigr)
\;=\; \sum_{T\in\mathcal{T}_i} e^{u_T} - (2^{m_i}-2) \;=\; 0.
\label{eq:Phi_zero}
\end{equation}
Three natural classes of configurations satisfy this:
\begin{enumerate}
\item[(i)] \emph{Critical point:} at $\bu = \mathbf{0}$, every individual
            term $e^{u_T}-1$ vanishes, so $\Phi_i = 0$ \emph{exactly};
            this is the unique point in $\R^{m_i}$ at which all
            $2^{m_i}-2$ summands vanish simultaneously.
\item[(ii)] \emph{Joint saturation, $u_j\to+\infty$ all:} $\Phi_i\to 0$
            \emph{asymptotically} (precise rates in
            \Cref{prop:asymptotic_rates} below).
\item[(iii)] \emph{Joint saturation, $u_j\to-\infty$ all:} likewise
            $\Phi_i\to 0$ asymptotically.
\end{enumerate}
For $m_i\ge 2$, the equation $\Phi_i(\bu) = 0$ defines an
$(m_i-1)$-dimensional real-analytic submanifold of $\R^{m_i}$ on which
the discrepancy vanishes exactly through cancellation between subsets
$T$ with $u_T > 0$ (positive contribution $e^{u_T}-1>0$) and subsets
with $u_T < 0$ (negative contribution). The critical point is the
unique point of this submanifold at which each summand individually
vanishes; elsewhere, the manifold avoids the orthants
$\{u_j\ge 0\;\forall j\}\setminus\{\mathbf{0}\}$ and
$\{u_j\le 0\;\forall j\}\setminus\{\mathbf{0}\}$ entirely, since on
those sets every summand is non-negative (resp.\ non-positive) and at
least one is strictly so.
\end{corollary}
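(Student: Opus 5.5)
The plan is to read everything off the boxed identity \eqref{eq:algebraic_identity} of \Cref{thm:algebraic_identity}. The prefactor
\[
\frac{e^{U_i}}{\bigl[\prod_{j}(1+e^{u_j})\bigr]\bigl[(2^{m_i}-1)+e^{U_i}\bigr]}
\]
is strictly positive and finite for every $\bu\in\R^{m_i}$. Hence $\fprod=\fwgtbc$ holds iff $\Phi_i(\bu)=0$. The second expression for $\Phi_i$ in \eqref{eq:Phi_zero} follows from $|\mathcal{T}_i|=2^{m_i}-2$.

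For (i), $\bu=\mathbf{0}$ gives $u_T=0$ for every $T$, so every summand vanishes. For uniqueness, assume $m_i\ge 2$. Then every singleton $\{j\}$ is a proper non-empty subset, so simultaneous vanishing of all summands forces $e^{u_j}=1$, i.e.\ $u_j=0$, for every $j$. For $m_i=1$ the family $\mathcal{T}_i$ is empty and $\Phi_i\equiv 0$, consistent with $b_i=0$ and $\fwgtbc=\fprod$ identically; I will note this degenerate case explicitly.

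For (ii)--(iii), $\Phi_i$ itself does not tend to zero in the saturation limits. As all $u_j\to-\infty$, $\Phi_i\to-(2^{m_i}-2)$; as all $u_j\to+\infty$, $\Phi_i\to+\infty$. So the correct reading, which I will state, is that the \emph{discrepancy} \eqref{eq:algebraic_identity} tends to zero. I will prove this from the full identity rather than from $\Phi_i$ alone.
\begin{itemize}
\item In the negative limit, the numerator factor $e^{U_i}\to 0$, while $\Phi_i$ stays bounded and the denominator tends to $2^{m_i}-1$.
\item In the positive limit, bound $\Phi_i\le(2^{m_i}-2)\,e^{U_i-\min_j u_j}$, since every proper subset $T$ omits at least one index. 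The denominator is at least $e^{U_i}\cdot e^{U_i}$. The discrepancy is therefore $O(e^{-\min_j u_j})\to 0$.
\end{itemize}
This also agrees with \Cref{thm:three_point}(c). The sharp rates are deferred to \Cref{prop:asymptotic_rates}.

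For the geometric claims, $\Phi_i$ is a finite sum of exponentials of linear forms, hence real-analytic. Its gradient is
\[
\partial\Phi_i/\partial u_j=\sum_{T\in\mathcal{T}_i,\,T\ni j}e^{u_T}>0
\]
for $m_i\ge 2$, because $\{j\}\in\mathcal{T}_i$. So $\nabla\Phi_i$ never vanishes. The zero set is non-empty since it contains $\mathbf{0}$. By the real-analytic implicit function theorem it is an embedded $(m_i-1)$-dimensional real-analytic submanifold; in fact it is a global graph over any coordinate hyperplane, since $\Phi_i$ is strictly increasing in each $u_j$.

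For orthant avoidance, take $\bu\ge 0$ with $\bu\ne\mathbf{0}$. Every summand satisfies $e^{u_T}-1\ge 0$, and the singleton term for some $u_j>0$ is strictly positive, so $\Phi_i>0$. The non-positive orthant is symmetric, giving $\Phi_i<0$. The cancellation description off the orthants follows at once.

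The main obstacle is the mismatch in (ii)--(iii) between ``$\Phi_i\to 0$'' as literally written and what actually holds. I will handle it by the prefactor argument above, and avoid claiming convergence of $\Phi_i$ itself.
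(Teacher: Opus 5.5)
Your proposal is correct. Its skeleton matches the paper's: positivity of the prefactor in \eqref{eq:algebraic_identity}, the subset-counting formula for $\nabla\Phi_i$, and the sign of $e^{u_T}-1$ on the orthants. You are more careful, however, at the two places where the paper's proof is incomplete.

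First, the paper calls (ii)--(iii) ``immediate''. As you note, $\Phi_i$ itself does not tend to $0$: it diverges to $+\infty$ under joint positive saturation and tends to $-(2^{m_i}-2)\neq 0$ under joint negative saturation. Only the discrepancy vanishes, because the prefactor suppresses $\Phi_i$. Your two bounds prove this correct reading, consistently with \Cref{thm:three_point}(c) and \Cref{prop:asymptotic_rates}. In the positive limit, $\Phi_i\le(2^{m_i}-2)e^{U_i-\min_j u_j}$ once all $u_j\ge 0$, against a denominator at least $e^{2U_i}$. In the negative limit, $e^{U_i}\to 0$ while $\Phi_i$ stays bounded and the denominator is at least $2^{m_i}-1$.

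Second, the paper evaluates the gradient only at $\mathbf{0}$ and then asserts that the submanifold structure ``extends globally on the full level set''. An implicit-function argument at a single point does not give this. Your observation closes the gap: $\partial_{u_j}\Phi_i\ge e^{u_j}>0$ everywhere, since every singleton lies in $\mathcal{T}_i$ when $m_i\ge 2$, so $\{\Phi_i=0\}$ is a regular level set.

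Your singleton argument for uniqueness in (i) is also a point the paper leaves implicit. So is your remark that $\mathcal{T}_i=\emptyset$ when $m_i=1$, so that uniqueness in (i) fails there.

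One wording fix: strict monotonicity in $u_j$ makes the zero set a graph over an open subset of a coordinate hyperplane, not over the whole hyperplane. For $m_i=2$, the curve $e^{u_1}+e^{u_2}=2$ lies over $\{u_2<\log 2\}$ only.
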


\begin{proof}
The first claim follows from \eqref{eq:algebraic_identity}: the
denominator and the prefactor $e^{U_i}$ are strictly positive, so the
discrepancy vanishes iff the inner sum vanishes. Points (i)--(iii) are
immediate. For the manifold claim, $\Phi_i$ is real-analytic on
$\R^{m_i}$, and a direct computation gives, at $\bu = \mathbf{0}$,
\[
\partial_{u_j}\Phi_i\bigl|_{\mathbf{0}}
\;=\; \sum_{T\in\mathcal{T}_i:\, j\in T} e^{u_T}\bigl|_{\mathbf{0}}
\;=\; \bigl|\{T\in\mathcal{T}_i : j\in T\}\bigr|
\;=\; 2^{m_i-1}-1 \;>\; 0,
\]
since the proper non-empty subsets of $[m_i]$ containing $j$ are in
bijection with subsets of $[m_i]\setminus\{j\}$ other than the full
set, of which there are $2^{m_i-1}-1$. The gradient $\nabla\Phi_i$ is
therefore nonzero at $\mathbf{0}$, and by the implicit function theorem
$\{\Phi_i = 0\}$ is an embedded $(m_i-1)$-dimensional analytic
submanifold of $\R^{m_i}$ in a neighbourhood of $\mathbf{0}$, extending
globally on the full level set. The exclusion of the strictly-positive
and strictly-negative orthants is immediate from $e^{u_T}-1$ having
the sign of $u_T$.
\end{proof}

\begin{corollary}[Symmetric-input case]
\label{cor:symmetric}
If all $u_j = u$ for a common scalar $u\in\R$,
\eqref{eq:algebraic_identity} reduces to
\begin{equation}
\fprod-\fwgtbc \;=\;
-\;\frac{e^{m_i u}\,\sum_{k=1}^{m_i-1}\binom{m_i}{k}\bigl(e^{ku}-1\bigr)}
       {(1+e^u)^{m_i}\,\bigl[(2^{m_i}-1) + e^{m_i u}\bigr]}.
\label{eq:symmetric_identity}
\end{equation}
\end{corollary}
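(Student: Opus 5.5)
This is a direct specialisation of \Cref{thm:algebraic_identity}, so I would substitute $u_j = u$ for every $j\in[m_i]$ into \eqref{eq:algebraic_identity} and simplify the three ingredients separately. No analytic estimates are needed, only the grouping of subsets by cardinality.

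\textbf{Prefactor and denominator.} With all $u_j=u$ we have $U_i = \sum_{j\in[m_i]} u_j = m_i u$. The prefactor $e^{U_i}$ therefore becomes $e^{m_i u}$. The bracket $(2^{m_i}-1)+e^{U_i}$ becomes $(2^{m_i}-1)+e^{m_i u}$. The product $\prod_{j\in[m_i]}(1+e^{u_j})$ collapses to $(1+e^u)^{m_i}$. These are exactly the factors appearing in \eqref{eq:symmetric_identity}.

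\textbf{Numerator sum.} For $T\subseteq[m_i]$ we have $u_T = \sum_{j\in T}u = |T|\,u$. Hence each summand $e^{u_T}-1$ depends on $T$ only through $k=|T|$. The proper non-empty subsets $\mathcal{T}_i$ are exactly those with $1\le k\le m_i-1$, and there are $\binom{m_i}{k}$ subsets of each cardinality $k$. Partitioning $\mathcal{T}_i$ by cardinality gives
\[
\sum_{T\in\mathcal{T}_i}\bigl(e^{u_T}-1\bigr)
\;=\;\sum_{k=1}^{m_i-1}\binom{m_i}{k}\bigl(e^{ku}-1\bigr).
\]
As a consistency check, the count of subsets is preserved: $\sum_{k=1}^{m_i-1}\binom{m_i}{k} = 2^{m_i}-2 = |\mathcal{T}_i|$.

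\textbf{Conclusion.} Inserting these expressions, together with the overall minus sign of \eqref{eq:algebraic_identity}, yields \eqref{eq:symmetric_identity}. The only step requiring any care is the cardinality grouping of $\mathcal{T}_i$. The degenerate case $m_i=1$ gives an empty sum and zero discrepancy, which is consistent with $b_i=0$.
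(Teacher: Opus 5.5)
Your proposal is correct and is essentially the paper's argument: the paper gives no separate proof and treats the corollary as the direct substitution $u_j=u$ into \eqref{eq:algebraic_identity}. As you say, the only step with content is grouping the $2^{m_i}-2$ proper non-empty subsets by cardinality $k$, each class contributing $\binom{m_i}{k}(e^{ku}-1)$.
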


\Cref{fig:comparison} illustrates this symmetric-case discrepancy.
The maximum absolute discrepancy over $u\in\R$ grows with $m_i$:
numerical optimisation gives $\max_u|\fprod-\fwgtbc| \approx 0.20$
for $m_i=2$, $\approx 0.39$ for $m_i=3$, and $\approx 0.54$ for
$m_i=4$, attained at $u\approx 1.4, 1.3, 1.25$ respectively. The
discrepancy decays as $u\to\pm\infty$ but does not vanish identically
between the reference points.

\begin{figure}[t]
\centering
\includegraphics[width=0.95\linewidth]{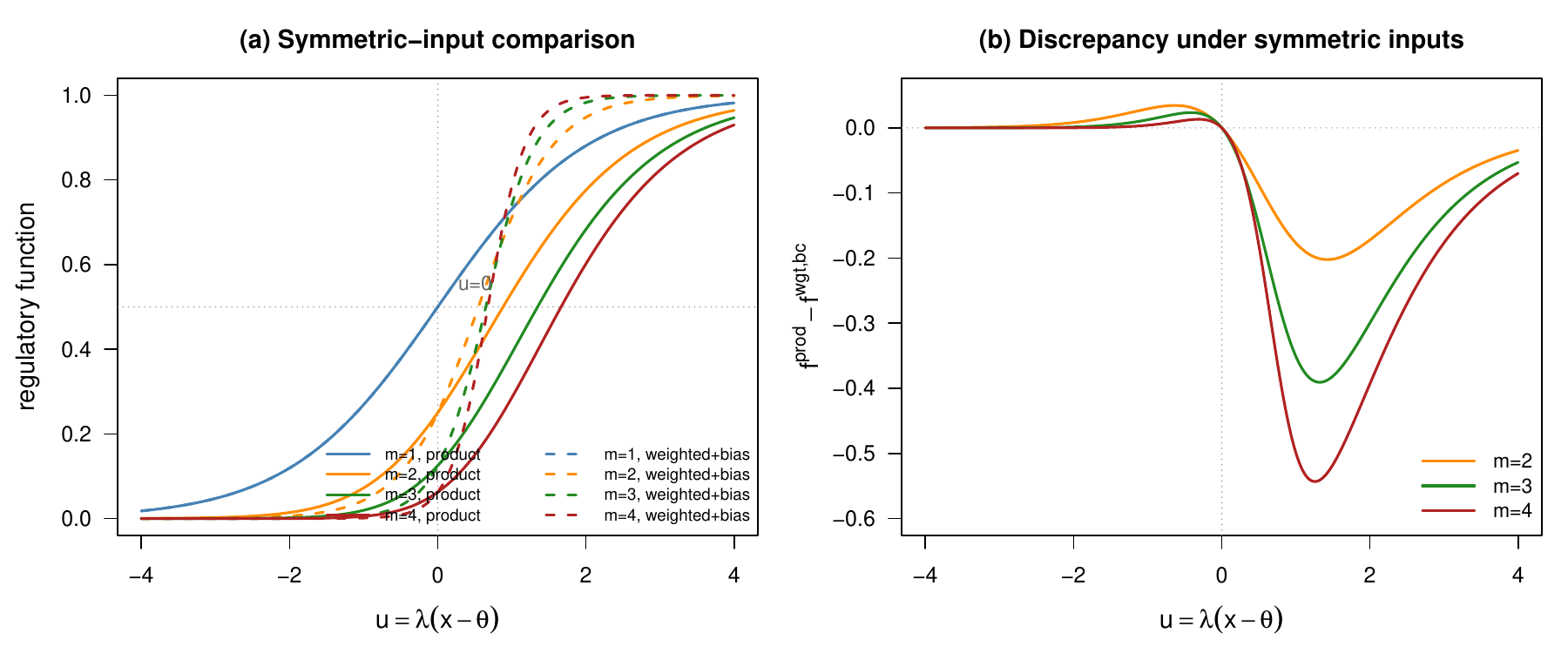}
\caption{Symmetric-input comparison ($u_j = u$, all activators) of the
product-of-logistics $\fprod = \sigma(u)^{m_i}$ and the bias-corrected
weighted-logistic $\fwgtbc = \sigma(m_i u - \log(2^{m_i}-1))$.
\textbf{(a)} Output curves for $m_i = 1,2,3,4$. Both formulations pass
through the critical-point value $1/2^{m_i}$ at $u=0$ and saturate
to $0$ and $1$ in the limits $u\to\mp\infty$. The bias-corrected
weighted form (dashed) is uniformly steeper than the product (solid)
for $m_i\ge 2$; the slope ratio at $u=0$ is $2-2^{1-m_i}$
(\Cref{prop:slope_ratio}). \textbf{(b)} Pointwise discrepancy
$\fprod - \fwgtbc$ from \eqref{eq:symmetric_identity}. The discrepancy
vanishes at $u=0$ and as $u\to\pm\infty$, and grows in absolute value
with $m_i$.}
\label{fig:comparison}
\end{figure}

\begin{corollary}[Pointwise sign of the discrepancy in the symmetric case]
\label{cor:sign}
In the symmetric configuration, $\fprod(\bx)-\fwgtbc(\bx)$ has the
opposite sign to $u$: the product overshoots the bias-corrected weighted
form for $u<0$ and undershoots it for $u>0$.
\end{corollary}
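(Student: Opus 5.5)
The plan is to read the sign off directly from the symmetric identity \eqref{eq:symmetric_identity} of \Cref{cor:symmetric}, which specialises \Cref{thm:algebraic_identity} to $u_j=u$ for all $j$. The denominator $(1+e^u)^{m_i}\bigl[(2^{m_i}-1)+e^{m_i u}\bigr]$ is strictly positive for every $u\in\R$, and so is the prefactor $e^{m_i u}$. Consequently the sign of $\fprod-\fwgtbc$ is the opposite of the sign of the finite sum
\[
\Psi(u)\;:=\;\sum_{k=1}^{m_i-1}\binom{m_i}{k}\bigl(e^{ku}-1\bigr).
\]

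The key step is to classify the sign of $\Psi$. Each binomial coefficient $\binom{m_i}{k}$ with $1\le k\le m_i-1$ is positive. Each factor $e^{ku}-1$ with $k\ge 1$ has exactly the sign of $u$, because $t\mapsto e^{kt}$ is strictly increasing and equals $1$ at $t=0$. Hence when $m_i\ge 2$ the sum is nonempty and every summand is strictly positive for $u>0$ and strictly negative for $u<0$. So $\sgn\Psi(u)=\sgn u$, and therefore $\sgn(\fprod-\fwgtbc)=-\sgn u$. In words, the product overshoots for $u<0$ and undershoots for $u>0$, and the two functions agree at $u=0$, consistent with \Cref{thm:three_point}(a). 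The same conclusion can be reached without the binomial reduction, by specialising the vanishing-set function $\Phi_i$ of \Cref{cor:vanishing} to the diagonal: each $u_T=|T|\,u$ with $|T|\ge 1$.

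The only point needing care is the degenerate case $m_i=1$. Here $\mathcal{T}_i$ is empty, so $\Psi\equiv 0$ and the two formulations coincide identically (indeed $b_i=0$ and $\fwgtbc=\sigma(u)=\fprod$). The statement should therefore be read for $m_i\ge 2$, or, for $m_i=1$, with the sign claim interpreted as weak (the discrepancy is zero). I would state this explicitly rather than treat it as an obstacle.

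There is no substantial obstacle. The main thing to verify is the positivity of the denominator and the prefactor in \eqref{eq:symmetric_identity}, which is immediate.
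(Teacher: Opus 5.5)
Your proposal is correct and matches the paper's proof: the denominator and the prefactor $e^{m_i u}$ in \eqref{eq:symmetric_identity} are positive, and every summand $\binom{m_i}{k}(e^{ku}-1)$ has the sign of $u$. Your remark that for $m_i=1$ the sum is empty and the discrepancy vanishes identically, so the strict sign claim needs $m_i\ge 2$, is a correct refinement that the paper leaves implicit.
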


\begin{proof}
The denominator of \eqref{eq:symmetric_identity} is positive. The
numerator's sign is opposite to that of $\sum_{k=1}^{m_i-1}
\binom{m_i}{k}(e^{ku}-1)$, which is positive for $u>0$
(every term positive), zero at $u=0$, and negative for $u<0$.
\end{proof}

\begin{proposition}[Sharp asymptotic decay rates of the discrepancy]
\label{prop:asymptotic_rates}
In the symmetric configuration ($u_j = u$ for all $j$), the
discrepancy admits the leading-order asymptotic estimates
\begin{equation}
\fprod(\bx) - \fwgtbc(\bx)
\;\sim\;
\begin{cases}
\,-\,m_i\,e^{-u}, & u\to +\infty,\\[3pt]
\,\dfrac{2^{m_i}-2}{2^{m_i}-1}\,e^{m_i u},
                  & u\to -\infty.
\end{cases}
\label{eq:asymptotic_rates}
\end{equation}
In the general (non-symmetric) configuration with $u_j\to+\infty$ for
all $j$, the discrepancy is dominated by the contribution of the
$(m_i-1)$-element subsets:
\begin{equation}
\fprod(\bx)-\fwgtbc(\bx) \;=\; -\,\Bigl(\sum_{l\in[m_i]}e^{-u_l}\Bigr)
                              \;+\; O\!\Bigl(e^{-\min_{l\neq l'}(u_l+u_{l'})}\Bigr).
\label{eq:asymptotic_general}
\end{equation}
\end{proposition}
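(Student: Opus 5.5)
For the symmetric case I would start from the closed form \eqref{eq:symmetric_identity} of \Cref{cor:symmetric}. I would isolate the dominant exponential in the numerator and in the denominator separately, and then take the quotient.

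\textbf{Case $u\to+\infty$.} In the numerator $e^{m_iu}\sum_{k=1}^{m_i-1}\binom{m_i}{k}(e^{ku}-1)$ the term $k=m_i-1$ dominates. This gives $m_i e^{(2m_i-1)u}(1+O(e^{-u}))$. In the denominator, $(1+e^u)^{m_i}=e^{m_iu}(1+O(e^{-u}))$ and $(2^{m_i}-1)+e^{m_iu}=e^{m_iu}(1+O(e^{-m_iu}))$. Their product is therefore $e^{2m_iu}(1+O(e^{-u}))$. Including the leading minus sign, the quotient is $-m_ie^{-u}(1+O(e^{-u}))$, as claimed.

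\textbf{Case $u\to-\infty$.} Each $e^{ku}-1\to-1$, so the binomial sum tends to $-\sum_{k=1}^{m_i-1}\binom{m_i}{k}=-(2^{m_i}-2)$. The denominator tends to $1\cdot(2^{m_i}-1)$. With the overall minus sign this gives $\frac{2^{m_i}-2}{2^{m_i}-1}e^{m_iu}$. The relative corrections are $O(e^{u})$.

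For the general configuration I would not go through \eqref{eq:algebraic_identity} directly, because tracking all subsets there is clumsy. I would instead expand each formulation separately as $u_j\to+\infty$. Write $\fprod=\prod_l(1+e^{-u_l})^{-1}$. The geometric expansion then gives
\[
\fprod \;=\; 1-\sum_l e^{-u_l}+R_1,
\qquad
|R_1|\le C\max_{l,l'}e^{-(u_l+u_{l'})}.
\]
From \eqref{eq:bcw_compact}, $\fwgtbc=(1+(2^{m_i}-1)e^{-U_i})^{-1}=1-(2^{m_i}-1)e^{-U_i}+O(e^{-2U_i})$. For $m_i\ge 2$ and all $u_j$ large and positive, $U_i\ge u_l+u_{l'}$ for any $l\ne l'$. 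The whole $\fwgtbc$ correction is therefore absorbed in the error. Subtracting the two expansions yields \eqref{eq:asymptotic_general}. As a consistency check, the leading term coincides with the contribution of the $(m_i-1)$-element subsets $T=[m_i]\setminus\{l\}$ in \eqref{eq:algebraic_identity}, since $e^{U_i}e^{u_T}/e^{2U_i}=e^{-u_l}$.

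\textbf{Main obstacle.} The delicate step is the remainder $R_1$. The second-order terms of the product expansion include the diagonal terms $e^{-2u_l}$ as well as the cross terms $e^{-u_l-u_{l'}}$ with $l\ne l'$. The diagonal terms are not automatically $O(e^{-\min_{l\neq l'}(u_l+u_{l'})})$ when one $u_l$ is much smaller than the others. I would first try to show that the diagonal contributions cancel or are dominated. If they are not, I would state the error term as $O(\max_{l,l'}e^{-(u_l+u_{l'})})$ including $l=l'$. Equivalently, I would restrict to comparable growth rates $u_l\asymp u_{l'}$, under which the two forms agree. In either form, the conclusion that the discrepancy is $-\sum_le^{-u_l}$ to leading order is unaffected.
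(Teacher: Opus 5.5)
Your symmetric-case computation is the paper's own argument: the $k=m_i-1$ term dominates as $u\to+\infty$, and the binomial sum tends to $-(2^{m_i}-2)$ as $u\to-\infty$. You track the relative errors somewhat more explicitly. Like the paper, it tacitly needs $m_i\ge 2$, since for $m_i=1$ the discrepancy vanishes identically.

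For the general configuration you take a different route. The paper stays inside \eqref{eq:algebraic_identity}: it keeps the $(m_i-1)$-element subsets in the numerator and replaces both denominator factors by their leading behaviour $e^{U_i}$. You instead expand $\fprod=\prod_l(1+e^{-u_l})^{-1}$ and $\fwgtbc=(1+(2^{m_i}-1)e^{-U_i})^{-1}$ separately, and use $U_i\ge u_l+u_{l'}$ to absorb the whole bcw correction. Your route is more elementary and, more importantly, makes the remainder visible. It becomes rigorous once you write $\sigma(u_l)=1-\rho_l$ with $\rho_l=\sigma(-u_l)=e^{-u_l}+O(e^{-2u_l})$ and use $1-\sum_l\rho_l\le\prod_l(1-\rho_l)\le 1-\sum_l\rho_l+\sum_{l<l'}\rho_l\rho_{l'}$.

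Your ``main obstacle'' is genuine, and you should stop hedging: the diagonal terms do not cancel. For $m_i\ge 2$ the bcw expansion $1-(2^{m_i}-1)e^{-U_i}+(2^{m_i}-1)^2e^{-2U_i}-\cdots$ contains no pure $e^{-2u_l}$ term. The discrepancy therefore keeps $+\sum_l e^{-2u_l}$ at second order. For $m_i=2$, expanding directly gives
\[
\fprod-\fwgtbc=-(e^{-u_1}+e^{-u_2})+e^{-2u_1}+e^{-2u_2}+4e^{-(u_1+u_2)}+(\text{third-order terms}).
\]
Along $u_2=10u_1\to\infty$ the remainder is asymptotic to $e^{-2u_1}$, whereas \eqref{eq:asymptotic_general} claims it is $O(e^{-11u_1})$. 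The stated error term is therefore false in general.

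The paper's proof misses this because it only uses $\prod_j(1+e^{u_j})\sim e^{U_i}$. The neglected relative correction $\sum_j e^{-u_j}$ multiplies the leading term and produces exactly these $e^{-2u_l}$ contributions. What your argument does prove, and what should replace \eqref{eq:asymptotic_general}, is
\[
\fprod-\fwgtbc=-\sum_l e^{-u_l}+O\bigl(e^{-2\min_l u_l}\bigr).
\]
This still gives the leading-order claim.

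One correction to your last remark: the two error forms do not agree merely when $u_l\asymp u_{l'}$. Let $u_{(1)}\le u_{(2)}$ be the two smallest inputs; then $e^{-2u_{(1)}}/e^{-(u_{(1)}+u_{(2)})}=e^{u_{(2)}-u_{(1)}}$. So the forms coincide only when the gap $u_{(2)}-u_{(1)}$ stays bounded, and already $u_2=2u_1$ separates them.
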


\begin{proof}
\textbf{Symmetric case, $u\to+\infty$.} Substitute $u_j = u$ into
\eqref{eq:symmetric_identity}. The numerator's inner sum is dominated
by the largest exponent, $k=m_i-1$: $\binom{m_i}{m_i-1}(e^{(m_i-1)u}
-1)\sim m_i\,e^{(m_i-1)u}$. The full numerator is therefore
$\sim m_i e^{m_i u}\cdot e^{(m_i-1)u} = m_i e^{(2m_i-1)u}$. The
denominator: $(1+e^u)^{m_i}\sim e^{m_i u}$ and
$(2^{m_i}-1)+e^{m_i u}\sim e^{m_i u}$, so the denominator is
$\sim e^{2m_i u}$. The ratio gives $-m_i e^{-u}$, as claimed.

\textbf{Symmetric case, $u\to-\infty$.} The numerator's inner sum is
$\sum_{k=1}^{m_i-1}\binom{m_i}{k}(e^{ku}-1)\to -(2^{m_i}-2)$ since
$e^{ku}\to 0$ for $k\ge 1$ and $\sum_k\binom{m_i}{k} = 2^{m_i}-2$ for
$1\le k\le m_i-1$. The numerator is therefore
$\sim e^{m_i u}\cdot(-(2^{m_i}-2))$, and the denominator
$(1+e^u)^{m_i}\to 1$ while $(2^{m_i}-1)+e^{m_i u}\to 2^{m_i}-1$. The
ratio gives $\frac{2^{m_i}-2}{2^{m_i}-1}e^{m_i u}$ (the minus sign in
\eqref{eq:symmetric_identity} cancels the minus from the inner-sum
limit).

\textbf{General case, $u_j\to+\infty$.} In
\eqref{eq:algebraic_identity}, the dominant terms in the numerator's
inner sum are the $(m_i-1)$-element subsets $T = [m_i]\setminus\{l\}$,
contributing $e^{U_i - u_l}$ for each $l\in[m_i]$. Lower-cardinality
subsets contribute strictly smaller exponents. The denominator
$\prod_j(1+e^{u_j})\sim e^{U_i}$ and $(2^{m_i}-1)+e^{U_i}\sim e^{U_i}$.
Combining gives \eqref{eq:asymptotic_general}.
\end{proof}

\Cref{prop:asymptotic_rates} sharpens \Cref{cor:vanishing}: the
discrepancy vanishes exponentially fast in the saturation limits, with
exponential rate $1$ (independent of $m_i$) in the $u\to+\infty$
direction and rate $m_i$ in the $u\to-\infty$ direction. The latter
rate is faster precisely because the bias correction makes
\emph{both} formulations decay at the natural rate
$\fprod, \fwgtbc \sim e^{m_i u}$; in the $u\to+\infty$ direction the
product approaches $1$ at the slower individual-factor rate
$1-m_i e^{-u}+O(e^{-2u})$, while the weighted form approaches $1$ at
the faster rate $1-(2^{m_i}-1)e^{-m_i u}+O(e^{-2m_i u})$. The
discrepancy is therefore dominated by the slower decay of the product
in the positive-saturation regime.

\subsection{Local sensitivity at the critical point}
\label{subsec:slope_ratio}

We now compute the ratio of partial derivatives of the two regulatory
functions at the critical point.

\begin{proposition}[Slope-ratio formula]
\label{prop:slope_ratio}
At the critical point $\bx_i^{(\mathrm{cp})}$ of gene $i$,
\begin{align}
\partial_{x_l}\fprod\big|_{\mathrm{cp}}
   \;&=\; \frac{\sgn(w_{il})\,\lambda}{2^{m_i+1}},
\label{eq:slope_prod}\\[2pt]
\partial_{x_l}\fwgtbc\big|_{\mathrm{cp}}
   \;&=\; \frac{w_{il}\,\lambda\,(2^{m_i}-1)}{4^{m_i}},
\label{eq:slope_wgt}
\end{align}
for any regulator $l$ of gene $i$. Consequently,
\begin{equation}
\eqbox{
\frac{|\partial_{x_l}\fwgtbc|_{\mathrm{cp}}|}
     {|\partial_{x_l}\fprod|_{\mathrm{cp}}|}
\;=\;
\frac{2(2^{m_i}-1)}{2^{m_i}}
\;=\; 2 - 2^{1-m_i}.}
\label{eq:slope_ratio}
\end{equation}
The ratio equals $1$ for $m_i = 1$, increases monotonically in $m_i$,
and approaches $2$ as $m_i\to\infty$.
\end{proposition}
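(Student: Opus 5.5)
The plan is a direct differentiation in the $u$-variables of \eqref{eq:u_def}, evaluated at $\bu=\mathbf{0}$. Both sides of \eqref{eq:slope_ratio} are derivatives with respect to a single regulator $x_l$. By \eqref{eq:u_def}, $\partial u_j/\partial x_l = \lambda w_{il}\delta_{jl}$. For the product we use the convention $|w_{il}|=1$ of \Cref{rem:magnitude_rescaling}, so this factor is $\lambda\,\sgn(w_{il})$. The product depends on $x_l$ only through the single factor $\sigma(u_l)$. The bcw form depends on $x_l$ only through $U_i=\lambda S_i$, with $\partial U_i/\partial x_l=\lambda w_{il}$.

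\textbf{Product slope.} By \eqref{eq:f_prod_in_u}, $\partial_{x_l}\fprod = \lambda\,\sgn(w_{il})\,\sigma'(u_l)\prod_{j\neq l}\sigma(u_j)$. At the critical point every $u_j=0$, so $\sigma'(0)=1/4$ and $\sigma(0)=1/2$. This gives $\lambda\,\sgn(w_{il})\cdot\tfrac14\cdot 2^{-(m_i-1)} = \sgn(w_{il})\lambda/2^{m_i+1}$, which is \eqref{eq:slope_prod}.

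\textbf{bcw slope.} Write $\fwgtbc=\sigma(U_i-\log(2^{m_i}-1))$. The chain rule gives $\partial_{x_l}\fwgtbc = \lambda w_{il}\,\fwgtbc(1-\fwgtbc)$. At the critical point, \Cref{thm:three_point}(a) gives $\fwgtbc=2^{-m_i}$. Hence $\fwgtbc(1-\fwgtbc)=2^{-m_i}(1-2^{-m_i})=(2^{m_i}-1)/4^{m_i}$, which is \eqref{eq:slope_wgt}.

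\textbf{Ratio and monotonicity.} Dividing the absolute values, again with $|w_{il}|=1$, gives
\[
\frac{(2^{m_i}-1)/4^{m_i}}{1/2^{m_i+1}} = \frac{2(2^{m_i}-1)}{2^{m_i}} = 2-2^{1-m_i}.
\]
At $m_i=1$ this equals $1$. It is strictly increasing in $m_i$ because $2^{1-m_i}$ is strictly decreasing, and it tends to $2$ as $m_i\to\infty$.

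There is no real obstacle. The one point needing care is the normalisation: \eqref{eq:slope_wgt} retains $w_{il}$ while \eqref{eq:slope_prod} carries only its sign. So the ratio formula holds under the unit-magnitude convention of \Cref{rem:magnitude_rescaling}, or else with a factor $|w_{il}|$ if the product is built with the same gains. I will state this explicitly.
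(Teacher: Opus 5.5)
Your proof is correct and is essentially the paper's own argument. You differentiate the single affected factor $\sigma(u_l)$ of the product and the single sigmoid $\sigma(\lambda S_i-\log(2^{m_i}-1))$, evaluate at $\bu=\mathbf{0}$ using $\sigma(0)=\tfrac12$, $\sigma'(0)=\tfrac14$ and $\fwgtbc=2^{-m_i}$, and divide; the paper likewise silently invokes $|w_{il}|=1$ when it writes $w_{il}\lambda/2^{m_i+1}=\sgn(w_{il})\lambda/2^{m_i+1}$.

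Your normalisation caveat is therefore well placed, but make sure to state its second clause the right way round. The extra factor $|w_{il}|$ in the ratio arises when the product is the unweighted \eqref{eq:f_prod_def}. If instead the product carries the same gains (the $u_j$ of \eqref{eq:u_def}), the factor cancels in the ratio, and only \eqref{eq:slope_prod} changes, to $w_{il}\lambda/2^{m_i+1}$.
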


\begin{proof}
\textbf{Slope of $\fprod$.} By \eqref{eq:f_prod_def}, with $u_j$ as
in \eqref{eq:u_def},
\[
\partial_{x_l}\fprod \;=\; w_{il}\,\lambda\,\sigma'(u_l)
                          \!\!\prod_{\substack{j\in[m_i]\\j\neq l}}
                          \!\!\sigma(u_j),
\]
since each factor of $\fprod$ depends on $x_l$ only through $u_l$.
At the critical point all $u_j=0$, so $\sigma(0)=\half$ and
$\sigma'(0)=\sigma(0)(1-\sigma(0)) = 1/4$, giving
$\partial_{x_l}\fprod = w_{il}\,\lambda\cdot 1/4\cdot (1/2)^{m_i-1}
= w_{il}\,\lambda/2^{m_i+1} = \sgn(w_{il})\,\lambda/2^{m_i+1}$.

\textbf{Slope of $\fwgtbc$.} Write $\fwgtbc = \sigma(\lambda S_i +
\log(2^{m_i}-1)\cdot(-1)) = \sigma(\lambda S_i - \log(2^{m_i}-1))$.
Then $\partial_{x_l}\fwgtbc = w_{il}\,\lambda\,
\sigma'(\lambda S_i - \log(2^{m_i}-1))$.
At the critical point $S_i = 0$ and $\sigma'(-\log(2^{m_i}-1)) =
\sigma(-\log(2^{m_i}-1))(1-\sigma(-\log(2^{m_i}-1)))$, with
\[
\sigma(-\log(2^{m_i}-1)) = \frac{1}{1+e^{\log(2^{m_i}-1)}}
                         = \frac{1}{2^{m_i}}.
\]
Hence $\sigma'(-\log(2^{m_i}-1)) = (1/2^{m_i})(1-1/2^{m_i}) =
(2^{m_i}-1)/4^{m_i}$, giving \eqref{eq:slope_wgt}.

\textbf{Ratio.} Taking absolute values,
$|\partial_{x_l}\fwgtbc|/|\partial_{x_l}\fprod| =
((2^{m_i}-1)/4^{m_i})/(1/2^{m_i+1})
= 2^{m_i+1}(2^{m_i}-1)/4^{m_i}
= 2(2^{m_i}-1)/2^{m_i} = 2-2^{1-m_i}$.

The ratio equals $2-1=1$ for $m_i=1$ and approaches $2$ as $m_i\to
\infty$. Monotonicity: $\frac{d}{dm}(2-2^{1-m}) = 2^{1-m}\log 2 > 0$.
\end{proof}

\Cref{prop:slope_ratio} quantifies the structural difference between
the two formulations at the critical point: the bias-corrected weighted
form is locally up to twice as sensitive as the product, with the
sharpening factor exactly $2-2^{1-m_i}$. For typical biological
networks with $m_i\in\{2,3,4\}$, the ratios are $3/2$, $7/4$,
$15/8$. This monotone sharpening is biologically interpretable: the
product formulation enforces consensus among regulators
(multiplicative occupancy), while the bias-corrected weighted form
sums log-odds (additive evidence), producing a sharper aggregate
response.

\begin{remark}[Comparison of slopes across all three formulations]
\label{rem:slope_three_way}
At the critical point of gene $i$ (when meaningfully defined for each
formulation), the magnitudes of the partial derivatives with respect
to any regulator $l$ are:
\begin{equation}
\bigl|\partial_{x_l}\fprod\bigr|_{\rm cp} = \frac{\lambda}{2^{m_i+1}},
\quad
\bigl|\partial_{x_l}\fwgtbc\bigr|_{\rm cp} = \frac{\lambda(2^{m_i}-1)}{4^{m_i}},
\quad
\bigl|\partial_{x_l}\fS\bigr|_{\rm cp} = \frac{\mu_i}{4}.
\label{eq:three_slopes}
\end{equation}
With the canonical parameter matching $\mu_i = \lambda$, the
Samuilik-to-product slope ratio is exactly $2^{m_i-1}$, growing
\emph{exponentially} in $m_i$, whereas the bias-corrected weighted
ratio $2-2^{1-m_i}$ stays bounded above by $2$. The Samuilik-to-bcw
ratio is therefore
\begin{equation}
\frac{\bigl|\partial_{x_l}\fS\bigr|_{\rm cp}}
     {\bigl|\partial_{x_l}\fwgtbc\bigr|_{\rm cp}}
\;=\; \frac{2^{m_i-1}}{2-2^{1-m_i}}
\;=\; \frac{2^{m_i-2}}{1-2^{-m_i}}
\;\sim\; 2^{m_i-2}\qquad(m_i\to\infty),
\label{eq:samuilik_bcw_ratio}
\end{equation}
taking the values $4/3,\,16/7,\,64/15$ for $m_i=2,3,4$ respectively.
This widening disparity quantifies the divergence of the Samuilik
formulation from the product as $m_i$ increases.
\end{remark}

\subsection{Curvature mismatch and broken structural symmetry}
\label{subsec:curvature}

A more delicate structural difference between the two formulations
emerges at second order.

\begin{proposition}[Curvature mismatch at the critical point]
\label{prop:curvature_mismatch}
At the critical point of gene $i$,
\begin{align}
\partial_{x_l}^2 \fprod\big|_{\mathrm{cp}} \;&=\; 0 \qquad
\text{for all } m_i\ge 1,\label{eq:fprod_zero_curv}\\[2pt]
\partial_{x_l}^2 \fwgtbc\big|_{\mathrm{cp}} \;&=\;
   \frac{w_{il}^2\,\lambda^2\,(2^{m_i}-1)(2^{m_i}-2)}{8^{m_i}}.
\label{eq:fwgt_curv}
\end{align}
The second derivative of $\fwgtbc$ vanishes iff $m_i=1$, while that
of $\fprod$ vanishes for every $m_i\ge 1$.
\end{proposition}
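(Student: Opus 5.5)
The plan is to differentiate both regulatory functions twice in $x_l$ through the single-variable logistic $\sigma$, using the change of variables \eqref{eq:u_def}, and then evaluate at $\bu=\mathbf{0}$. This mirrors the proof of \Cref{prop:slope_ratio}. The only extra ingredient is the second-derivative identity
\[
\sigma''(z)=\sigma(z)\bigl(1-\sigma(z)\bigr)\bigl(1-2\sigma(z)\bigr),
\]
which follows by differentiating $\sigma'=\sigma(1-\sigma)$ once more.

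\textbf{Product.} By \eqref{eq:f_prod_in_u}, $\fprod=\prod_{j}\sigma(u_j)$, and only the factor $\sigma(u_l)$ depends on $x_l$, with $\partial u_l/\partial x_l=w_{il}\lambda$. Hence
\[
\partial^2_{x_l}\fprod=w_{il}^2\lambda^2\,\sigma''(u_l)\prod_{j\neq l}\sigma(u_j).
\]
At the critical point $u_l=0$, and $\sigma''(0)=\tfrac14(1-1)=0$. This gives \eqref{eq:fprod_zero_curv} for every $m_i\ge1$; the case $m_i=1$ is included because the empty product equals $1$.

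\textbf{Bias-corrected form.} Write $\fwgtbc=\sigma(z)$ with $z=\lambda S_i-\log(2^{m_i}-1)$, which is affine in $x_l$ with $\partial z/\partial x_l=w_{il}\lambda$. Then $\partial^2_{x_l}\fwgtbc=w_{il}^2\lambda^2\sigma''(z)$. At the critical point $S_i=0$, so $\sigma(z)=2^{-m_i}$ as computed in \Cref{prop:slope_ratio}. Substituting into the identity gives
\[
\sigma''=\frac{2^{m_i}-1}{4^{m_i}}\bigl(1-2^{1-m_i}\bigr)=\frac{(2^{m_i}-1)(2^{m_i}-2)}{8^{m_i}},
\]
which is \eqref{eq:fwgt_curv}.

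\textbf{Vanishing criterion.} Since $w_{il}\neq0$ for a regulator $l$ and $2^{m_i}-1\ge1$, the expression in \eqref{eq:fwgt_curv} vanishes iff $2^{m_i}=2$, that is, iff $m_i=1$. Under the convention $|w_{ij}|=1$ we have $w_{il}^2=1$, so the result matches the table entry.

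There is no genuine obstacle here. The only step requiring care is the sign and algebra in $1-2\sigma=1-2^{1-m_i}=(2^{m_i}-2)/2^{m_i}$, where one combines powers of $2$ to reach the $8^{m_i}$ denominator.
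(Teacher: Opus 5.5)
Your proposal is correct and follows essentially the paper's argument: it reduces the product to $w_{il}^2\lambda^2\sigma''(u_l)$ times the non-differentiated factors and the bias-corrected form to $w_{il}^2\lambda^2\sigma''(-\log(2^{m_i}-1))$, then applies $\sigma''=\sigma(1-\sigma)(1-2\sigma)$ with $\sigma(0)=\tfrac12$ and $\sigma(-\log(2^{m_i}-1))=2^{-m_i}$. The only cosmetic difference is that the paper freezes the other regulators at their thresholds before differentiating, whereas you differentiate the general expression and then evaluate; both give the same partial derivative.
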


\begin{proof}
For $\fprod$, holding $x_j = \theta_{ij}$ fixed for $j\neq l$ leaves
\[
\fprod(x_l)\big|_{\rm{others}=\theta} \;=\; \sigma(u_l)\cdot \half^{m_i-1},
\]
so $\partial_{x_l}^2\fprod = w_{il}^2\lambda^2\sigma''(u_l)\cdot(\half)^{m_i-1}$.
At $u_l=0$, $\sigma''(0) = \sigma'(0)(1-2\sigma(0)) = (1/4)(1-1) = 0$,
proving \eqref{eq:fprod_zero_curv}.

For $\fwgtbc$, holding $x_j=\theta_{ij}$ fixed for $j\neq l$ leaves
$\fwgtbc(x_l) = \sigma(u_l - \log(2^{m_i}-1))$ (since under the
single-variable restriction $\lambda S_i = w_{il}\lambda(x_l-
\theta_{il}) = u_l$), and
$\partial_{x_l}^2\fwgtbc = w_{il}^2\lambda^2 \sigma''(-\log(2^{m_i}-1))$.
Now
\[
\sigma''(z) = \sigma'(z)(1-2\sigma(z))
            = \sigma(z)(1-\sigma(z))(1-2\sigma(z)),
\]
and at $z=-\log(2^{m_i}-1)$ with $\sigma(z) = 1/2^{m_i}$,
\[
\sigma''(z) = \frac{1}{2^{m_i}}\cdot\frac{2^{m_i}-1}{2^{m_i}}\cdot
              \left(1 - \frac{2}{2^{m_i}}\right)
            = \frac{(2^{m_i}-1)(2^{m_i}-2)}{8^{m_i}},
\]
yielding \eqref{eq:fwgt_curv}. This vanishes iff $m_i=1$, since
$2^{m_i}-2>0$ for $m_i\ge 2$.
\end{proof}

The structural asymmetry exposed by \Cref{prop:curvature_mismatch}
has direct consequences for normal-form bifurcation analysis. In the
product-of-logistics framework, the closed-form pitchfork bifurcation
of the symmetric balanced toggle switch~\cite{gardner2000construction}
and the supercritical Hopf
bifurcation of the symmetric balanced
repressilator~\cite{elowitz2000synthetic} both rely on the
identity $f^{-\prime\prime}(\theta) = 0$
(equivalently $\sigma''(0)=0$), which forces the leading-order
nonlinearity in the centre-manifold reduction to be cubic and
stabilising~\cite{belgacem2025exploring,belgacem2026logistic}. The
bias-corrected weighted formulation, by introducing the shift
$-\log(2^{m_i}-1)$ that places the sigmoid evaluation away from the
inflection point of $\sigma$, generically activates the quadratic
term and may render the corresponding bifurcations \emph{subcritical}
or \emph{transcritical}. This is a substantive limitation of the
weighted formulation for analytical bifurcation theory, and a strong
reason to retain the product formulation when normal-form analysis
on canonical motifs is the analytical objective.

\subsection{No state-independent global equivalence}
\label{subsec:no_global}

A natural question is whether a more elaborate state-independent shift
could eliminate the discrepancy of \Cref{thm:algebraic_identity}
globally. The answer is negative.

\begin{proposition}[Non-existence of state-independent global equivalence]
\label{prop:no_global_eq}
Under the unit-magnitude convention of \Cref{rem:magnitude_rescaling},
fix $m_i\ge 2$, $\lambda>0$, and a sign pattern
$w_{i\cdot}\in\{-1,+1\}^{m_i}$. There is no constant $b\in\R$ such that
\begin{equation}
\sigma\bigl(\lambda S_i(\bx) + \lambda b\bigr)
\;=\; \fprod(\bx)
\qquad\text{for all }\bx\in\R^n.
\label{eq:no_global}
\end{equation}
In particular, the bias \eqref{eq:bcw_model} is the unique constant
$b\in\R$ realising the critical-point match
$\fwgtbc(\bx_i^{(\mathrm{cp})}) = \fprod(\bx_i^{(\mathrm{cp})})$, and
no other constant achieves global equivalence either.
\end{proposition}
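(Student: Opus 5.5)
The plan is a two-step necessity argument: first pin down the only candidate constant, then exhibit a state where that candidate fails.

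\textbf{Step 1 (only one candidate).} Suppose some $b\in\R$ satisfies \eqref{eq:no_global}. Evaluating at the critical point $\bx_i^{(\mathrm{cp})}$, where $S_i=0$, gives $\sigma(\lambda b)=2^{-m_i}$. By \Cref{prop:bias_canonicity}(i) and the strict monotonicity of $\sigma$, this forces $\lambda b=-\log(2^{m_i}-1)$, so $b=b_i$. This already proves the uniqueness clause of the statement. It also reduces the question of global equivalence to whether $\fwgtbc\equiv\fprod$ holds identically.

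\textbf{Step 2 (the candidate fails).} By \Cref{cor:vanishing}, $\fprod=\fwgtbc$ at a state $\bx$ if and only if $\Phi_i(\bu)=\sum_{T\in\mathcal{T}_i}(e^{u_T}-1)=0$.

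\begin{itemize}
\item The map $\bx\mapsto\bu$ is surjective onto $\R^{m_i}$. Each $u_j=w_{ij}\lambda(x_j-\theta_{ij})$ depends only on $x_j$, and $w_{ij}=\pm1$, $\lambda>0$.
\item So it suffices to find one $\bu\in\R^{m_i}$ with $\Phi_i(\bu)\neq0$.
\item Take $u_1=t>0$ and $u_j=0$ for $j\ge2$.
\item The set $\mathcal{T}_i$ is nonempty because $m_i\ge2$. The singleton $\{1\}$ belongs to it and contributes $e^t-1>0$.
\item Every summand is nonnegative, since $u_T\ge0$ for all $T$. Hence $\Phi_i>0$.
\end{itemize}

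By \eqref{eq:algebraic_identity}, $\fprod(\bx)-\fwgtbc(\bx)<0$ at the corresponding state $\bx$. This contradicts \eqref{eq:no_global}.

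An alternative, more self-contained route avoids the identity altogether: compare slopes at the critical point. If \eqref{eq:no_global} held, the partial derivatives of both sides would agree everywhere. \Cref{prop:slope_ratio} gives the ratio $2-2^{1-m_i}\neq1$ for $m_i\ge2$, which is a contradiction. Either route finishes the proof, and I would present the first with the slope argument as a remark.

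\textbf{Main obstacle.} There is no serious obstacle. The only care needed is to confirm that the reduction from $\bx\in\R^n$ to $\bu\in\R^{m_i}$ loses nothing, which is the surjectivity point above, and to use $m_i\ge2$ exactly where it is needed. That hypothesis enters twice: it makes $\mathcal{T}_i$ nonempty, and it makes the slope ratio differ from $1$. For $m_i=1$, $\Phi_i$ is an empty sum and equivalence does hold.
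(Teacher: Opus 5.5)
Your proof is correct, but it is organised differently from the paper's. The paper does not pin down $b$ first. It applies the logit to both sides of \eqref{eq:no_global} and uses $\lambda S_i = U_i$ to cancel the affine parts. This reduces global equivalence, for an arbitrary $b$, to the requirement that $\Lambda(\bu)=\log\sum_{T\subsetneq[m_i]}e^{u_T}$ be constant. It rules this out by a log-sum-exp convexity/independence argument, and treats the critical-point uniqueness clause separately at the end.

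You instead use the critical point to force $b=b_i$ at the outset, which also discharges the uniqueness clause. You then only need to show $\fwgtbc\not\equiv\fprod$. You get this from the already-proved identity \eqref{eq:algebraic_identity} with the explicit witness $\bu=(t,0,\ldots,0)$, where $\Phi_i=(2^{m_i-1}-1)(e^t-1)>0$. You also make explicit the surjectivity of $\bx\mapsto\bu$, which the paper uses tacitly when it passes to ``for all $\bu\in\R^{m_i}$''.

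The two arguments rest on the same fact, since $\Lambda(\bu)=\log\bigl(2^{m_i}-1+\Phi_i(\bu)\bigr)$, so $\Lambda$ is constant if and only if $\Phi_i$ is. Your version is more concrete: it needs no convexity argument and even gives the sign of the discrepancy at the witness. The paper's logit route handles all constants $b$ in one stroke and directly exhibits $\lambda b_i=-\Lambda(\mathbf{0})$, which is what sets up the mean-field interpretation of \Cref{prop:mean_field}.

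Your slope-ratio alternative is also valid, but only when run after Step~1, because \Cref{prop:slope_ratio} concerns the specific constant $b_i$.
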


\begin{proof}
Suppose, for contradiction, that \eqref{eq:no_global} holds. Applying
the inverse $\sigma^{-1}(y) = \log(y/(1-y))$ (the logit transformation)
to both sides,
\[
\lambda S_i(\bx) + \lambda b
\;=\; \log\!\frac{\fprod(\bx)}{1-\fprod(\bx)}
\qquad\forall\bx.
\]
The LHS is affine in $\bx$. Using $u_j$ as in \eqref{eq:u_def} and
$\fprod$ as in \eqref{eq:f_prod_in_u},
\[
\log\frac{\fprod}{1-\fprod}
\;=\; \log e^{U_i} - \log\!\Bigl[\,\prod_j(1+e^{u_j}) - e^{U_i}\Bigr]
\;=\; U_i - \log\Bigl[\,\sum_{T\subsetneq [m_i]} e^{u_T}\Bigr].
\]
Since $\lambda S_i = U_i$, the relation reduces to the equation
\begin{equation}
\lambda b \;=\; -\log\Bigl[\,\sum_{T\subsetneq[m_i]} e^{u_T}\Bigr]
\qquad\forall\,\bu\in\R^{m_i}.
\label{eq:reduced_b}
\end{equation}
For $m_i\ge 2$, the inner sum contains the linearly independent
exponentials $e^{u_\emptyset}=1$ and $e^{u_{\{1\}}}=e^{u_1}$. The
function $\bu\mapsto \log\sum_{T\subsetneq[m_i]} e^{u_T}$ is therefore
a log-sum-exp of at least two affinely independent linear functionals
of $\bu$, hence strictly convex and not constant on
$\R^{m_i}$~\cite{boyd2004convex}. The RHS of \eqref{eq:reduced_b}
thus depends on $\bu$, contradicting the requirement that it equal
the constant $\lambda b$.

For uniqueness of $b$ at the critical point: setting $S_i=0$ in
\eqref{eq:bcw_model} yields $\fwgtbc(\bx_i^{(\mathrm{cp})}) =
\sigma(\lambda b)$, monotone increasing in $b$. The equation
$\sigma(\lambda b) = (1/2)^{m_i}$ has the unique solution
$b = -\lambda^{-1}\log(2^{m_i}-1)$.
\end{proof}

The proof reveals a structural fact: the bias \eqref{eq:bcw_model} is
exactly the value $-\lambda^{-1}\log(\sum_{T\subsetneq[m_i]} e^{u_T})$
evaluated at the critical point $\bu=0$. The state-dependence that
would be required to make this an exact identity has been
\emph{frozen} at its critical-point value. We now make this
mean-field interpretation rigorous.

\begin{proposition}[Bias correction as zero-order mean-field approximation]
\label{prop:mean_field}
The bias-corrected weighted-logistic regulatory function
\eqref{eq:bcw_compact} is the unique single-sigmoid approximation
$\sigma\bigl(\lambda S_i + \lambda b(\bu)\bigr)$ to the
product-of-logistics in which the state-dependent log-sum-exp
correction
\[
\Lambda(\bu) \;:=\; \log\!\Bigl[\,\sum_{T\subsetneq[m_i]} e^{u_T}\Bigr],
\qquad
\sigma^{-1}(\fprod) \;=\; U_i - \Lambda(\bu),
\]
is replaced by its value at the critical point:
\begin{equation}
\Lambda(\bu) \;\mapsto\; \Lambda(\mathbf{0}) \;=\; \log(2^{m_i}-1).
\label{eq:mean_field_replace}
\end{equation}
Equivalently, $\lambda b = -\Lambda(\mathbf{0})$.
\end{proposition}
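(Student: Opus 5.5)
The plan is to reuse the logit computation from the proof of \Cref{prop:no_global_eq} and then freeze the resulting state-dependent correction at the critical point. First I would rewrite $\fprod$ exactly as a single sigmoid with a state-dependent logit shift. By \eqref{eq:f_prod_in_u}, $1-\fprod = \bigl[\prod_j(1+e^{u_j}) - e^{U_i}\bigr]/\prod_j(1+e^{u_j})$. The subset expansion used in the proof of \Cref{thm:algebraic_identity} gives $\prod_j(1+e^{u_j}) - e^{U_i} = \sum_{T\subsetneq[m_i]} e^{u_T} = e^{\Lambda(\bu)}$. Therefore
\[
\sigma^{-1}(\fprod) = \log\frac{\fprod}{1-\fprod} = U_i - \Lambda(\bu),
\qquad\text{that is,}\qquad
\fprod = \sigma\bigl(\lambda S_i - \Lambda(\bu)\bigr),
\]
using $U_i=\lambda S_i$. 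This establishes the displayed identity in the statement. It also shows that the family $\sigma(\lambda S_i+\lambda b(\bu))$ reproduces $\fprod$ exactly if and only if $\lambda b(\bu) = -\Lambda(\bu)$, by strict monotonicity (injectivity) of $\sigma$.

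Next I would evaluate $\Lambda$ at the critical point. At $\bu=\mathbf 0$ every term $e^{u_T}$ equals $1$, and there are $2^{m_i}-1$ proper subsets of $[m_i]$, the empty set included. Hence $\Lambda(\mathbf 0)=\log(2^{m_i}-1)$. Replacing $\Lambda(\bu)$ by this constant gives $\sigma(\lambda S_i-\log(2^{m_i}-1))$, which is exactly \eqref{eq:bcw_compact}, with $\lambda b_i=-\Lambda(\mathbf 0)$.

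The main point needing care is the word ``unique.'' I would make it precise as follows: among approximations in which $b(\bu)$ is replaced by a \emph{state-independent} constant that agrees with the exact correction $-\Lambda(\bu)/\lambda$ at the critical point $\bu=\mathbf 0$, there is only one. Its value is forced to be $-\Lambda(\mathbf 0)/\lambda$, and this coincides with the bias of \Cref{prop:bias_canonicity}(i) and \Cref{prop:no_global_eq}.

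As a consistency remark, I would note that \Cref{prop:no_global_eq} shows $\Lambda$ is non-constant for $m_i\ge2$, so the frozen value is a genuine zero-order approximation and not an identity. I would also observe that $\nabla\Lambda(\mathbf 0)$ has components $(2^{m_i-1}-1)/(2^{m_i}-1)$, a count borrowed from \Cref{cor:vanishing}. These components are nonzero, which explains the first-order mismatch recorded in \Cref{prop:slope_ratio}.
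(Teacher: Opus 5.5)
Your proposal is correct and follows the paper's proof. You derive the exact logit representation $\fprod=\sigma(\lambda S_i-\Lambda(\bu))$ as in \Cref{prop:no_global_eq}, freeze $\Lambda$ at $\Lambda(\mathbf{0})=\log(2^{m_i}-1)$ to recover \eqref{eq:bcw_compact}, and get uniqueness of the constant from matching at $\bu=\mathbf{0}$. You phrase uniqueness as agreement of the correction term rather than of the function value, which is equivalent by the injectivity of $\sigma$ you already invoke, and your gradient computation for $\Lambda$ is a correct extra consistent with \Cref{prop:slope_ratio}.
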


\begin{proof}
By the same derivation as \eqref{eq:reduced_b}, $\fprod = \sigma(U_i -
\Lambda(\bu))$ exactly. Replacing the convex function $\Lambda(\bu)$
by its constant value at $\bu = \mathbf{0}$ yields
$\sigma(U_i - \Lambda(\mathbf{0})) = \sigma(\lambda S_i -
\log(2^{m_i}-1)) = \fwgtbc$. Conversely, any constant replacement
$\Lambda(\bu)\mapsto c$ producing a single-sigmoid function
$\sigma(\lambda S_i - c)$ that agrees with $\fprod$ at $\bu = \mathbf{0}$
must satisfy $\sigma(-c) = 2^{-m_i}$, i.e.\ $c = \log(2^{m_i}-1) =
\Lambda(\mathbf{0})$.
\end{proof}

\Cref{prop:mean_field} positions the bias correction in a
mean-field/variational-approximation framework: the
product-of-logistics regulatory function has the exact log-odds
representation $\sigma^{-1}(\fprod) = U_i - \Lambda(\bu)$, the
difference of a linear and a strictly convex log-sum-exp functional.
Replacing $\Lambda(\bu)$ by any constant $c$ yields a single-sigmoid
approximation; the choice $c = \Lambda(\mathbf{0})$ minimises the
local error at the critical point and is the unique choice for which
the approximation is exact there. The same construction underlies
mean-field approximations to log-partition functions in statistical
mechanics~\cite{wainwright2008graphical}, in which a state-dependent
correction is approximated by its value at a reference configuration.

A natural follow-up question is whether allowing a more general affine
$\Phi(\bx)$ in a single-sigmoid family $\sigma(\Phi(\bx))$ could
simultaneously match the product's value, slope, \emph{and} curvature
at the critical point. We close this section with a negative answer.

\begin{proposition}[Structural limit of affine single-sigmoid approximations]
\label{prop:affine_limit}
Fix $m_i\ge 2$. Let $\Phi(\bx) = c_0 + \sum_j c_j(x_j-\theta_{ij})$
be an arbitrary affine function. There is no choice of coefficients
$(c_0,c_1,\ldots,c_n)$ for which the single-sigmoid approximation
$F(\bx) = \sigma(\Phi(\bx))$ simultaneously satisfies, at the critical
point $\bx = \bx_i^{(\mathrm{cp})}$:
\begin{equation}
F = \fprod, \qquad
\partial_{x_l}F = \partial_{x_l}\fprod, \qquad
\partial^2_{x_l}F = \partial^2_{x_l}\fprod \quad \forall l\in\Acal_i\cup\Rcal_i.
\label{eq:affine_match}
\end{equation}
\end{proposition}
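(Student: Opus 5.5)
The plan is to show that the value and curvature conditions in \eqref{eq:affine_match} already contradict each other, so the slope conditions are never needed. The key fact is that for $F(\bx)=\sigma(\Phi(\bx))$ with $\Phi$ affine, every pure second derivative factorises as
\[
\partial^2_{x_l}F\big|_{\rm cp} \;=\; c_l^2\,\sigma''(c_0),
\qquad
\sigma''(c_0)=\sigma(c_0)\bigl(1-\sigma(c_0)\bigr)\bigl(1-2\sigma(c_0)\bigr).
\]
This holds because at the critical point $\Phi=c_0$ and $\partial_{x_l}\Phi=c_l$. The first derivative is likewise $\partial_{x_l}F|_{\rm cp}=c_l\,\sigma'(c_0)$. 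I would record these two formulas first, as a direct chain-rule computation.

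Next I impose the value condition. By \Cref{rem:critical_point}, $\fprod|_{\rm cp}=2^{-m_i}$, so $\sigma(c_0)=2^{-m_i}$. The strict monotonicity argument of \Cref{prop:bias_canonicity}(i) then forces $c_0=-\log(2^{m_i}-1)$. Substituting gives $\sigma''(c_0)=(2^{m_i}-1)(2^{m_i}-2)/8^{m_i}$, the same constant computed in the proof of \Cref{prop:curvature_mismatch}. This constant is strictly positive for $m_i\ge 2$.

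Then I use the slope condition only to pin down that $c_l\neq 0$. From \eqref{eq:slope_prod}, $\partial_{x_l}\fprod|_{\rm cp}=\sgn(w_{il})\lambda/2^{m_i+1}\neq 0$, so $c_l\,\sigma'(c_0)\neq 0$ and hence $c_l\neq 0$. Explicitly, $c_l=\sgn(w_{il})\lambda\,2^{m_i-1}/(2^{m_i}-1)$, although the exact value is not needed.

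Finally I compare with \eqref{eq:fprod_zero_curv}, which gives $\partial^2_{x_l}\fprod|_{\rm cp}=0$. Matching the curvatures would require $c_l^2\,\sigma''(c_0)=0$. Since $\sigma''(c_0)>0$, this forces $c_l=0$, contradicting the slope condition. So even a single regulator $l$ yields the contradiction, and a fortiori the conditions cannot all hold for every $l\in\Acal_i\cup\Rcal_i$.

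There is no real obstacle. The one point to state carefully is that the value condition alone fixes $c_0$ uniquely, by injectivity of $\sigma$. Once $c_0$ is fixed, $\sigma''(c_0)\neq 0$ precisely because $2^{-m_i}\neq 1/2$ when $m_i\ge 2$, and the argument goes through.
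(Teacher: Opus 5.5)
Your proof is correct and is essentially the paper's argument: the value condition fixes $c_0=-\log(2^{m_i}-1)$, the slope condition forces $c_l\neq 0$, and the curvature condition $c_l^2\sigma''(c_0)=0$ then fails because $\sigma''(c_0)\neq 0$. The paper phrases this last step as $\sigma''(c_0)=0\Rightarrow c_0=0$, which is the same contradiction. One correction: your opening claim that the slope conditions are never needed is wrong, since value and curvature alone are compatible (take $c_1=\cdots=c_n=0$, so $F\equiv 2^{-m_i}$), and your argument in fact uses the slope condition to get $c_l\neq 0$.
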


\begin{proof}
Suppose, for contradiction, that all three conditions hold. From the
value condition $\sigma(c_0) = 1/2^{m_i}$ we obtain $c_0 = -\log(2^{m_i}
-1)\ne 0$ (since $m_i\ge 2$). The slope condition
$\partial_{x_l}F|_{\rm cp} = c_l\sigma'(c_0)\stackrel{!}{=}
\sgn(w_{il})\lambda/2^{m_i+1}$ (using \eqref{eq:slope_prod}) pins down
$c_l = \sgn(w_{il})\lambda/[2^{m_i+1}\sigma'(c_0)]$, which is finite
and nonzero. The curvature condition $\partial^2_{x_l}F|_{\rm cp} =
c_l^2\sigma''(c_0)\stackrel{!}{=}\partial^2_{x_l}\fprod|_{\rm cp} =
0$ by \eqref{eq:fprod_zero_curv}. Since $c_l\ne 0$, this forces
$\sigma''(c_0) = 0$, equivalently $c_0 = 0$, contradicting $c_0 =
-\log(2^{m_i}-1)\ne 0$.
\end{proof}

\Cref{prop:affine_limit} is the structural obstruction underlying
\Cref{prop:curvature_mismatch}: matching the value at the critical
point forces the sigmoid to be evaluated away from its inflection
point, where $\sigma''\ne 0$; matching the slope by tuning the affine
gradient $c_l$ then forces a nonzero curvature
$c_l^2\sigma''\ne 0$, which cannot match $\fprod$'s vanishing
curvature. The bias-corrected weighted formulation \eqref{eq:bcw_model}
corresponds to the choice $c_l = w_{il}\lambda$ (the natural
sign-encoded gradient), in which case both the slope and curvature of
$F$ at the critical point have unequal magnitudes relative to $\fprod$.
A two-parameter family $\sigma(c\lambda S_i + c_0)$ with adjustable
overall scale $c$ can additionally match the slope (by choosing
$c = (2-2^{1-m_i})^{-1}$), but \Cref{prop:affine_limit} shows that
no choice of $(c_0,c)$ can match the curvature as well.

\begin{remark}[Heterogeneous steepness]
\label{rem:nonuniform_lambda}
The construction extends to gene/regulator-specific steepness
$\lambda_{ij}$ at the cost of a more elaborate bias formula. With
$u_j = w_{ij}\,\lambda_{ij}(x_j-\theta_{ij})$, the product-form
critical-point value is still $(1/2)^{m_i}$. Defining
$\tilde{S}_i = \sum_j u_j$, the matching bias becomes
$\tilde{b}_i = -\log(2^{m_i}-1)$, applied additively to $\tilde{S}_i$
rather than to $\lambda S_i$. The slope-ratio formula
\eqref{eq:slope_ratio} extends with $\lambda$ replaced by $\lambda_{il}$
in the $l$-th partial derivative, and \Cref{thm:algebraic_identity}
holds verbatim with the new $u_j$ definition. The numerical
illustrations of this paper assume uniform $\lambda$ for clarity.
\end{remark}

\section{Equivalence of Fixed-Weight and Real-Weight
         Product-of-Logistics}
\label{sec:weight_rescaling}

We now establish a complementary structural fact: within the
product-of-logistics framework, real-valued positive weights are
parameter-equivalent to fixed unit weights. This justifies the
unit-magnitude convention $|w_{ij}|=1$ adopted in
\Cref{rem:magnitude_rescaling} for the analysis of
\Cref{def:weighted_model} and \Cref{def:bcw_model}, and complements
the parallel observation in~\cite[\S 8.3.3]{belgacem2025exploring}
that fixed-weight product-of-logistics enjoys an identifiability
advantage absent from weighted-sum formulations: the rescaling
\eqref{eq:rescale_def} below shows that the apparent freedom of using
real weights $w_{ij} > 0$ in the activator and repressor factors does
not enlarge the dynamics-distinguishable parameter space, so weight
freedom and threshold/steepness freedom collapse into a single
identifiable triple per regulator.

\begin{proposition}[Equivalence under positive parameter rescaling]
\label{prop:weight_rescaling}
Let gene $i$ be regulated by one activator $x_j$ ($w_{ij}>0$) and
one repressor $x_k$ ($w_{ik}>0$). The weighted product-of-logistics
dynamics
\begin{equation}
\dot{x}_i \;=\; \kappa_i\!\left[
   \frac{1}{1+e^{-\lambda_{ij}(w_{ij}x_j - \theta_{ij})}} \cdot
   \frac{1}{1+e^{-\lambda_{ik}(\theta_{ik} - w_{ik}x_k)}}
\right] - \gamma_i x_i
\label{eq:weighted_product}
\end{equation}
admit the parameter-rescaling identity
\begin{equation}
\dot{x}_i \;=\; \kappa_i\!\left[
   \frac{1}{1+e^{-\lambda_{ij}'(x_j - \theta_{ij}')}} \cdot
   \frac{1}{1+e^{-\lambda_{ik}'(\theta_{ik}' - x_k)}}
\right] - \gamma_i x_i,
\label{eq:weighted_product_rescaled}
\end{equation}
under
\begin{equation}
\lambda_{ij}' = \lambda_{ij}\,w_{ij}>0,\quad
\theta_{ij}' = \theta_{ij}/w_{ij}>0,\quad
\lambda_{ik}' = \lambda_{ik}\,w_{ik}>0,\quad
\theta_{ik}' = \theta_{ik}/w_{ik}>0.
\label{eq:rescale_def}
\end{equation}
The two parameterisations produce identical trajectories.
\end{proposition}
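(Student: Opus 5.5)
The proof is a direct algebraic identity, carried out factor by factor inside the regulatory function. Nothing else in \eqref{eq:weighted_product} changes: the degradation term $-\gamma_i x_i$ and the synthesis rate $\kappa_i$ are common to both systems.

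\textbf{Activator factor.} I will factor $w_{ij}>0$ out of the affine exponent:
\[
-\lambda_{ij}(w_{ij}x_j-\theta_{ij}) \;=\; -\lambda_{ij}w_{ij}\bigl(x_j-\theta_{ij}/w_{ij}\bigr) \;=\; -\lambda_{ij}'(x_j-\theta_{ij}').
\]
This uses the definitions in \eqref{eq:rescale_def}.

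\textbf{Repressor factor.} Similarly, I will factor out $w_{ik}>0$:
\[
-\lambda_{ik}(\theta_{ik}-w_{ik}x_k) \;=\; -\lambda_{ik}w_{ik}\bigl(\theta_{ik}/w_{ik}-x_k\bigr) \;=\; -\lambda_{ik}'(\theta_{ik}'-x_k).
\]
Positivity of $\lambda',\theta'$ follows from $w>0$, $\lambda>0$ and $\theta>0$. So each primed factor is again a genuine increasing logistic $f^+(x_j,\theta'_{ij},\lambda'_{ij})$, respectively a decreasing logistic $f^-(x_k,\theta'_{ik},\lambda'_{ik})$, in the sense of \eqref{eq:logistic_activation}--\eqref{eq:logistic_repression}. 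In particular the midpoint $\theta'_{ik}$ stays positive, so the rescaled repressor does not develop the pathology of \Cref{subsec:repression_pathology}.

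\textbf{Trajectories.} The two exponents agree pointwise for every $(x_j,x_k)$, so the two vector fields $\mathbf{F}$ and $\mathbf{F}'$ coincide identically on $\R^n$. Both are globally Lipschitz; the argument is the same as in \Cref{thm:well_posedness}(b), with the bound $\kappa_i\lambda'/4$ per factor. By Picard--Lindelöf uniqueness, the same initial condition therefore gives the same solution for all $t\ge0$, which is the claim.

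I would close with a remark that the construction is invertible, since $w_{ij}=\lambda'_{ij}/\lambda_{ij}$ can be recovered given any $\lambda_{ij}$. Hence only the pair $(\lambda w,\theta/w)$ is dynamically identifiable, justifying \Cref{rem:magnitude_rescaling}. The same factorisation applies verbatim to any number of activator and repressor factors.

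There is no real obstacle. The only point requiring care is the sign bookkeeping in the repressor factor, that is, checking that dividing $\theta_{ik}$ by $w_{ik}$ rather than multiplying it is correct, which the displayed factorisation settles.
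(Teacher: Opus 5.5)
Your proposal is correct and follows the paper's proof: you factor $w_{ij}>0$ and $w_{ik}>0$ out of the two exponents to get $\lambda'_{ij}(x_j-\theta'_{ij})$ and $\lambda'_{ik}(\theta'_{ik}-x_k)$, and read off positivity of the primed parameters. Your appeal to uniqueness of solutions for the identical vector fields only spells out the ``identical trajectories'' step that the paper leaves implicit.
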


\begin{proof}
For the activator factor, factor out $w_{ij}$:
$-\lambda_{ij}(w_{ij}x_j - \theta_{ij}) = -\lambda_{ij}w_{ij}(x_j -
\theta_{ij}/w_{ij}) = -\lambda_{ij}'(x_j-\theta_{ij}')$. The repressor
factor is symmetric. Strict positivity of all rescaled parameters is
immediate from $w_{ij},w_{ik}>0$.
\end{proof}

\begin{remark}[Identifiability consequences]
\label{rem:identifiability}
\Cref{prop:weight_rescaling} reveals an intrinsic identifiability
limit of the weighted product-of-logistics: the products
$\lambda_{ij}\cdot w_{ij}$ and ratios $\theta_{ij}/w_{ij}$ are
recoverable from input-output data, but the individual triples
$(\lambda_{ij},\theta_{ij},w_{ij})$ are not. This is independent of
the data quality; it is a property of the model class. Modelling can
therefore be carried out unambiguously with unit weights and
gene/regulator-specific $(\lambda_{ij},\theta_{ij})$, or with
real positive weights and rescaled parameters, depending on convenience.
The Samuilik formulation \eqref{eq:samuilik_def} does not admit any
such rescaling: the signed weight $w_{ij}^{\mathrm{S}}$ must remain
inside the increasing sigmoid for direction encoding, and any change
of variable that simplifies the activator part propagates to the
repressor part with the wrong sign.
\end{remark}

\section{Equilibrium Correspondence and Linearised Dynamics}
\label{sec:equilibrium}

Theorems~\ref{thm:three_point}--\ref{thm:algebraic_identity} establish
that the regulatory functions $\fprod$ and $\fwgtbc$ coincide as static
maps at the critical-point reference configurations. We now turn to
the corresponding dynamical question: \emph{when do the
\textbf{full ODE systems}
$\dot{x}_i = \kappa_i f_i - \gamma_i x_i$ share equilibria, and how do
their linearised dynamics differ?}

\subsection{Shared critical-point equilibria}
\label{subsec:shared_eq}

\begin{theorem}[Shared critical-point equilibrium]
\label{thm:shared_equilibrium}
Consider the product-of-logistics system
\begin{equation}
\dot{x}_i \;=\; \kappa_i\,\fprod(\bx) - \gamma_i x_i, \qquad i=1,\ldots,n,
\label{eq:dyn_prod}
\end{equation}
and the bias-corrected weighted system
\begin{equation}
\dot{x}_i \;=\; \kappa_i\,\fwgtbc(\bx) - \gamma_i x_i, \qquad i=1,\ldots,n.
\label{eq:dyn_bcw}
\end{equation}
Suppose the parameters satisfy the \emph{critical-point self-consistency
condition}
\begin{equation}
\theta_{ij} \;=\; \frac{\kappa_j}{\gamma_j\,2^{m_j}}
\qquad\text{for every pair }(i,j)\text{ with } w_{ij}\ne 0.
\label{eq:self_consistency}
\end{equation}
Then the configuration
\begin{equation}
\bx^{*} \;:=\; \Bigl(\,\tfrac{\kappa_1}{\gamma_1 2^{m_1}},\ldots,
                       \tfrac{\kappa_n}{\gamma_n 2^{m_n}}\,\Bigr)
\label{eq:shared_eq}
\end{equation}
is a simultaneous equilibrium of \eqref{eq:dyn_prod} and
\eqref{eq:dyn_bcw}, and at $\bx^*$ both regulatory functions evaluate
to $1/2^{m_i}$.
\end{theorem}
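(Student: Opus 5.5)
The plan is a direct substitution argument, carried out gene by gene. Fix $i\in\{1,\ldots,n\}$ and evaluate both right-hand sides at $\bx^{*}$ from \eqref{eq:shared_eq}.

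\textbf{Step 1: $\bx^{*}$ is the critical point of every gene.} For each regulator $j$ of gene $i$ (that is, each $j$ with $w_{ij}\neq 0$), the self-consistency condition \eqref{eq:self_consistency} gives
\[
x_j^{*} = \frac{\kappa_j}{\gamma_j 2^{m_j}} = \theta_{ij}.
\]
So $\bx^{*}$ coincides, in the coordinates that gene $i$ depends on, with the critical point $\bx_i^{(\mathrm{cp})}$ of \Cref{rem:critical_point}. Neither $\fprod$ nor $\fwgtbc$ depends on the coordinates $x_j$ with $w_{ij}=0$. Hence $\fprod(\bx^{*})=\fprod(\bx_i^{(\mathrm{cp})})$, and the same holds for $\fwgtbc$. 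This holds for every $i$ simultaneously, because \eqref{eq:self_consistency} is imposed for all pairs $(i,j)$.

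\textbf{Step 2: both regulatory functions equal $2^{-m_i}$.} Invoke \Cref{thm:three_point}(a), or equivalently \Cref{lem:critical_values} together with \Cref{prop:bias_canonicity}(i). This gives
\[
\fprod(\bx^{*}) = \fwgtbc(\bx^{*}) = 1/2^{m_i}.
\]
For the product, each factor is $\sigma(0)=1/2$. For the bcw form, $S_i(\bx^{*})=0$, so \eqref{eq:bcw_compact} gives $1/(1+(2^{m_i}-1))$.

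\textbf{Step 3: the equilibrium equations.} Substituting into \eqref{eq:dyn_prod} and \eqref{eq:dyn_bcw} gives
\[
\dot x_i\big|_{\bx^{*}} = \kappa_i 2^{-m_i} - \gamma_i\cdot\frac{\kappa_i}{\gamma_i 2^{m_i}} = 0
\]
for both systems and every $i$. Therefore $\bx^{*}$ is a common equilibrium.

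The only point needing care, and so the step I expect to be the main obstacle, is Step 1. The self-consistency must be read per pair $(i,j)$: a regulator $j$ feeding several genes must have the same threshold $\theta_{ij}=\kappa_j/(\gamma_j2^{m_j})$ in each of them. Only then is the single point $\bx^{*}$ simultaneously the critical point for all genes. For a gene with $m_i\ge1$ this is immediate from the hypothesis. If some gene had $m_i=0$ it would fall outside the framework, since the paper assumes $m_i\ge 1$ throughout. Hence no separate case analysis is needed.
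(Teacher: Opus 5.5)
Your proposal is correct and follows the paper's own proof: the self-consistency condition makes $\bx^{*}$ the critical point of every gene, Theorem~\ref{thm:three_point}(a) then gives $\fprod(\bx^{*})=\fwgtbc(\bx^{*})=1/2^{m_i}$, and substituting into $\kappa_i 2^{-m_i}-\gamma_i x_i^{*}=0$ finishes the argument. Your additional remarks, that the functions do not depend on coordinates with $w_{ij}=0$ and that the condition must hold per pair $(i,j)$, make explicit what the paper leaves implicit.
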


\begin{proof}
Under \eqref{eq:self_consistency}, for every $i$ and every regulator
$j$ of $i$, $x_j^* = \kappa_j/(\gamma_j 2^{m_j}) = \theta_{ij}$. Hence
$\bx^*$ is the critical point of every gene, and by
\Cref{thm:three_point}(a), $\fprod(\bx^*) = \fwgtbc(\bx^*) = 1/2^{m_i}$.
The equilibrium equation reads
$\kappa_i\cdot 2^{-m_i} - \gamma_i x_i^* = 0$, which is satisfied by
$x_i^* = \kappa_i/(\gamma_i 2^{m_i})$.
\end{proof}

The self-consistency condition \eqref{eq:self_consistency} is a
non-vacuous but biologically interpretable structural constraint:
the threshold of regulator $j$ as seen by gene $i$ must equal the
critical-point equilibrium concentration of regulator $j$ itself.
Generically the constraint singles out a specific submanifold of the
parameter space $(\kappa_i,\gamma_i,\theta_{ij})$; on this submanifold,
both formulations share a canonical equilibrium with regulatory output
$1/2^{m_i}$. Off this submanifold, the two formulations
generically have \emph{distinct} equilibria, as illustrated
numerically in \Cref{sec:application}.

\begin{corollary}[Globally unique shared equilibrium in the contractive regime]
\label{cor:global_unique}
Assume that, in addition to the self-consistency condition
\eqref{eq:self_consistency}, the parameters satisfy the contraction
condition
\begin{equation}
\gamma_i \;>\; \kappa_i\,\cdot\,\frac{\lambda\,m_i}{4}
\qquad \text{for all } i=1,\ldots,n.
\label{eq:contraction_cond}
\end{equation}
Set
\begin{equation}
\alpha \;:=\;
\min_{i=1,\ldots,n}\!\Bigl(\gamma_i - \kappa_i\,\tfrac{\lambda\,m_i}{4}\Bigr)
\;>\;0.
\label{eq:contraction_rate}
\end{equation}
Then both the product-of-logistics system \eqref{eq:dyn_prod} and the
bias-corrected weighted system \eqref{eq:dyn_bcw} are globally
$\ell^\infty$-contractive on $\R^n_{\ge 0}$ at rate $\alpha$: for any
two solutions $\bx(\cdot),\bx'(\cdot)$ of either formulation,
\begin{equation}
\|\bx(t)-\bx'(t)\|_\infty
\;\le\; e^{-\alpha\,t}\,\|\bx(0)-\bx'(0)\|_\infty
\qquad\text{for all } t\ge 0.
\label{eq:exponential_contraction}
\end{equation}
Each formulation admits the configuration $\bx^{*}$ of
\eqref{eq:shared_eq} as the \emph{unique globally exponentially
attracting} equilibrium, with explicit convergence bound
\begin{equation}
\|\bx(t) - \bx^{*}\|_\infty \;\le\;
e^{-\alpha\,t}\,\|\bx(0) - \bx^{*}\|_\infty,
\label{eq:exp_convergence_eq}
\end{equation}
and from any common initial condition $\bx(0)\in\R^n_{\ge 0}$ the two
trajectories $\bx^{\rm prod}(t)$, $\bx^{\rm bcw}(t)$ both converge to
$\bx^{*}$ at the same guaranteed rate $\alpha$.
\end{corollary}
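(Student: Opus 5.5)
We would prove \Cref{cor:global_unique} by a one-sided Lipschitz (logarithmic-norm) estimate in the $\ell^\infty$ norm. The estimate is applied uniformly to both vector fields $F^{\rm prod}_i = \kappa_i\fprod - \gamma_i x_i$ and $F^{\rm bcw}_i = \kappa_i\fwgtbc-\gamma_i x_i$, and the equilibrium statement then comes for free from \Cref{thm:shared_equilibrium}. The first step is to record uniform Jacobian bounds on the off-diagonal and regulatory parts. For the bcw form, \eqref{eq:jacobian_bound} of \Cref{thm:well_posedness} gives $|\kappa_i\partial_{x_l}\fwgtbc|\le\kappa_i\lambda/4$ for each regulator $l$ under $|w_{il}|=1$. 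For the product, $\partial_{x_l}\fprod = w_{il}\lambda\sigma'(u_l)\prod_{j\ne l}\sigma(u_j)$, and since $0<\sigma<1$ and $\sigma'\le 1/4$ this gives the same bound $\kappa_i\lambda/4$. Hence, for either system, the regulatory row sum satisfies $\sum_l \kappa_i|\partial_{x_l} f_i|\le \kappa_i\lambda m_i/4$ uniformly on $\R^n$.

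Next we compute the $\ell^\infty$ logarithmic norm of $J = \kappa\,\partial f - \Gamma$. With $\mu_\infty(J)=\max_i\bigl(J_{ii}+\sum_{l\ne i}|J_{il}|\bigr)$, we bound $J_{ii} = -\gamma_i + \kappa_i\partial_{x_i}f_i \le -\gamma_i + \kappa_i|\partial_{x_i}f_i|$. This handles self-regulation (the case $i$ among its own regulators) without sign assumptions, and gives $\mu_\infty(J(\bx))\le \max_i(-\gamma_i+\kappa_i\lambda m_i/4) = -\alpha$ for every $\bx$.

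Then we invoke the standard contraction lemma: if $\mu(J(\bx))\le -\alpha$ on a convex forward-invariant set, then $D^+\|\bx(t)-\bx'(t)\|\le -\alpha\|\bx(t)-\bx'(t)\|$, and Grönwall yields \eqref{eq:exponential_contraction}. I would prove this inline via the mean-value form $F(\bx)-F(\bx')=\int_0^1 J(\bx'+s(\bx-\bx'))\,\diff s\,(\bx-\bx')$. At an index $i$ attaining $\|\bx-\bx'\|_\infty=|x_i-x_i'|$, the upper Dini derivative of $|x_i-x_i'|$ is bounded by the integrated $i$-th row estimate; the maximum over active indices then gives the claim. The bound holds globally on $\R^n$ since the Jacobian estimates do, so it holds a fortiori on $\R^n_{\ge 0}$. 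Forward invariance of $\R^n_{\ge 0}$ follows as in \Cref{thm:well_posedness}(d) for both systems, because both $f_i$ take values in $(0,1)$.

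Finally, \Cref{thm:shared_equilibrium} gives that $\bx^*$ is an equilibrium of both systems. Taking $\bx'(t)\equiv\bx^*$ in the contraction estimate gives \eqref{eq:exp_convergence_eq}. Uniqueness follows because any second equilibrium $\bar\bx$ would satisfy $\|\bar\bx-\bx^*\|_\infty\le e^{-\alpha t}\|\bar\bx-\bx^*\|_\infty$ for all $t$, forcing $\bar\bx=\bx^*$. The common-initial-condition claim is the same bound applied to each system separately.

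The main obstacle is the Dini-derivative step for the non-smooth norm $\|\cdot\|_\infty$. The other delicate point is the diagonal entry when gene $i$ self-regulates. In that case $\partial_{x_i}f_i$ may be positive, and it must be absorbed into the row sum rather than dropped; it is $m_i$ (counting the self-loop) that makes the rate $\gamma_i-\kappa_i\lambda m_i/4$ come out correctly.
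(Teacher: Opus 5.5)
Your proposal is correct and follows the same route as the paper: bound every regulatory partial derivative by $\kappa_i\lambda/4$ for both formulations, deduce $\mu_\infty(J(\bx))\le-\alpha$ everywhere, apply $\ell^\infty$ contraction, and then use \Cref{thm:shared_equilibrium} with $\bx'\equiv\bx^*$ to get the equilibrium, its uniqueness and the convergence bound. Beyond what the paper writes, you prove the contraction lemma inline via Dini derivatives, and you treat the diagonal entry explicitly when a gene self-regulates, counting it in $m_i$. The paper leaves the first to ``standard contraction theory'' and glosses over the second in its off-diagonal row-sum description.
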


\begin{proof}
For either formulation, the regulatory function $f_i$ has
component-wise sigmoidal Lipschitz constant $|\partial_{x_l}f_i| \le
\lambda/4$, so the row-sum of $|\partial F_i/\partial x_l|$ at any
point is at most $\kappa_i\,\lambda\,m_i/4$ on the off-diagonal,
combined with the diagonal contribution $-\gamma_i$. Condition
\eqref{eq:contraction_cond} implies that the logarithmic
$\ell^\infty$ norm of the Jacobian satisfies
\[
\mu_{\infty}(J(\bx))
\;\le\; \max_{i=1,\ldots,n}\!\Bigl(-\gamma_i + \kappa_i\,\tfrac{\lambda m_i}{4}\Bigr)
\;=\; -\,\alpha
\;<\;0,
\]
with $\alpha$ given by \eqref{eq:contraction_rate} and positive by
\eqref{eq:contraction_cond}. Standard contraction theory (see, e.g.,
\cite{hirsch2005monotone,belgacem2014mathematical,
belgacem2013stability,belgacem2018reduction}) then yields the
contraction estimate \eqref{eq:exponential_contraction} for any two
solutions of either formulation, and a unique globally exponentially
stable equilibrium for each. By \Cref{thm:shared_equilibrium},
$\bx^{*}$ is an equilibrium of both systems; uniqueness forces it to
be the unique global one for each, and applying
\eqref{eq:exponential_contraction} with $\bx'(\cdot)\equiv\bx^{*}$
yields \eqref{eq:exp_convergence_eq}.
\end{proof}

\Cref{cor:global_unique} is the strongest dynamical equivalence
statement that can be made between the two formulations: in the
contractive regime under self-consistency, not only do the systems
share a critical-point equilibrium but they both converge globally
and exponentially to it from any initial condition. The contraction
condition \eqref{eq:contraction_cond} is the standard
diagonal-dominance criterion for logistic-based GRNs developed
in~\cite{belgacem2014mathematical,belgacem2018reduction,
belgacem2014stability,belgacem2013stability,belgacem2013analysis} for
stability and model-reduction analyses of transcription--translation
systems, and reflects the analytical advantage of working with
logistic functions whose individual Lipschitz constants are bounded
uniformly by $\lambda/4$.

\begin{proposition}[Robustness of the equilibrium correspondence]
\label{prop:robustness_eq}
Let $\boldsymbol\theta^{0} = (\theta_{ij}^0)$ with $\theta_{ij}^0 :=
\kappa_j/(\gamma_j 2^{m_j})$ denote the nominal self-consistent
thresholds of \Cref{thm:shared_equilibrium}, and let
$\bx^{*}$ be the corresponding shared equilibrium \eqref{eq:shared_eq}.
Suppose the actual thresholds satisfy $\theta_{ij} = \theta_{ij}^0 +
\eta_{ij}$ with perturbation matrix
$\boldsymbol\eta = (\eta_{ij})$. Assume the Jacobians
$J^{\rm prod}, J^{\rm bcw}$ at $(\bx^{*}, \boldsymbol\theta^0)$ are
non-singular. Then, for sufficiently small $\|\boldsymbol\eta\|$,
each system has a unique nearby equilibrium $\tilde\bx^{\rm prod}
(\boldsymbol\eta), \tilde\bx^{\rm bcw}(\boldsymbol\eta)$ depending
smoothly on $\boldsymbol\eta$, with leading-order expansions
\begin{align}
\tilde\bx^{\rm prod}(\boldsymbol\eta) - \bx^{*}
   &= (J^{\rm prod})^{-1}\,\Psi^{\rm prod}\boldsymbol\eta
     + O(\|\boldsymbol\eta\|^2),
\label{eq:eq_dev_prod}\\[2pt]
\tilde\bx^{\rm bcw}(\boldsymbol\eta) - \bx^{*}
   &= (J^{\rm bcw})^{-1}\,\Psi^{\rm bcw}\boldsymbol\eta
     + O(\|\boldsymbol\eta\|^2),
\label{eq:eq_dev_bcw}
\end{align}
where $\Psi^{\rm prod}, \Psi^{\rm bcw}$ are the parameter-Jacobian
matrices defined component-wise by
\begin{equation}
(\Psi^{\rm prod})_{i,(jk)} \,=\, \frac{w_{ik}\kappa_i\lambda\,
\mathbf{1}_{\{j=i\}}}{2^{m_i+1}},
\qquad
(\Psi^{\rm bcw})_{i,(jk)} \,=\, \frac{w_{ik}\kappa_i\lambda
(2^{m_i}-1)\,\mathbf{1}_{\{j=i\}}}{4^{m_i}},
\label{eq:psi_def}
\end{equation}
where the index $(jk)$ enumerates the parameter $\theta_{jk}$
(threshold of gene $j$ associated with regulator $k$); the indicator
$\mathbf{1}_{\{j=i\}}$ enforces that only perturbations to the thresholds of
gene $i$ itself contribute to row $i$ of $\Psi$. The
parameter-Jacobian matrices satisfy the rescaling identity
\begin{equation}
\Psi^{\rm bcw} \;=\; D\,\Psi^{\rm prod},
\qquad D = \operatorname{diag}(2-2^{1-m_i}),
\label{eq:psi_rescale}
\end{equation}
so the relative deviation magnitudes satisfy
\begin{equation}
\|\tilde\bx^{\rm bcw}-\bx^{*}\| \;\le\; \|(J^{\rm bcw})^{-1}\|\,
\|D\|\,\|\Psi^{\rm prod}\|\,\|\boldsymbol\eta\|
+ O(\|\boldsymbol\eta\|^2),
\label{eq:eq_dev_bound}
\end{equation}
with $\|D\|\le 2$.
\end{proposition}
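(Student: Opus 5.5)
The proof rests on the implicit function theorem applied to the equilibrium map. For either formulation define $G(\bx,\boldsymbol\eta) := \bigl(\kappa_i f_i(\bx;\boldsymbol\theta^0+\boldsymbol\eta) - \gamma_i x_i\bigr)_{i=1}^n$. By \Cref{thm:well_posedness}(a) and the analogous property of \eqref{eq:f_prod_def}, $G$ is $C^\infty$ jointly in $(\bx,\boldsymbol\eta)$. By \Cref{thm:shared_equilibrium} it satisfies $G(\bx^*,\mathbf{0})=0$. Its state Jacobian $\partial_{\bx}G(\bx^*,\mathbf{0})$ is exactly $J^{\rm prod}$ or $J^{\rm bcw}$, which is nonsingular by hypothesis. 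The implicit function theorem then gives a neighbourhood of $\boldsymbol\eta=\mathbf{0}$ and a unique smooth branch $\tilde\bx(\boldsymbol\eta)$ with $G(\tilde\bx(\boldsymbol\eta),\boldsymbol\eta)=0$ and $\tilde\bx(\mathbf{0})=\bx^*$. It also gives the derivative $D\tilde\bx(\mathbf{0}) = -J^{-1}\,\partial_{\boldsymbol\eta}G(\bx^*,\mathbf{0})$. A first-order Taylor expansion of the smooth branch then yields \eqref{eq:eq_dev_prod}--\eqref{eq:eq_dev_bcw}, once $\Psi := -\partial_{\boldsymbol\eta}G$ is identified. The overall sign should be absorbed into the convention for $\Psi$; I would check that convention against \eqref{eq:psi_def} and state it explicitly.

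The concrete step is computing the parameter Jacobian. The threshold $\theta_{jk}$ enters only the regulatory function of gene $j$, and it enters through $x_k-\theta_{jk}$. Hence $\partial_{\theta_{jk}} f_i = -\mathbf{1}_{\{j=i\}}\,\partial_{x_k} f_i$, which produces the indicator in \eqref{eq:psi_def}. Evaluating at $\bx^*$, where every gene sits at its critical point, the slope formulas \eqref{eq:slope_prod}--\eqref{eq:slope_wgt} of \Cref{prop:slope_ratio} give the following.
\begin{itemize}
\item For the product, the entries are $\kappa_i\,\sgn(w_{ik})\lambda/2^{m_i+1}$.
\item For the bias-corrected form, they are $\kappa_i\,w_{ik}\lambda(2^{m_i}-1)/4^{m_i}$.
\end{itemize}
Under the convention $|w_{ik}|=1$ of \Cref{rem:magnitude_rescaling}, $\sgn(w_{ik})=w_{ik}$, and these are exactly the entries in \eqref{eq:psi_def}.

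The rescaling identity \eqref{eq:psi_rescale} follows row by row. Every nonzero entry of row $i$ of $\Psi^{\rm bcw}$ equals the corresponding entry of $\Psi^{\rm prod}$ times $2-2^{1-m_i}$, by the slope ratio \eqref{eq:slope_ratio}. Left multiplication by the diagonal matrix $D$ is precisely this row scaling.

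The bound \eqref{eq:eq_dev_bound} then follows from submultiplicativity of any consistent operator norm applied to $(J^{\rm bcw})^{-1}D\Psi^{\rm prod}\boldsymbol\eta$. The estimate $\|D\|\le 2$ holds because $D$ is diagonal with entries in $[1,2)$, so its norm in any $\ell^p$-induced norm is $\max_i(2-2^{1-m_i})<2$.

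The main obstacle I expect is bookkeeping rather than analysis: fixing the sign convention linking $\Psi$ to $\partial_{\boldsymbol\eta}G$, and making sure $\partial_{\theta}f$ is evaluated at $\boldsymbol\eta=\mathbf{0}$, where the critical-point slopes apply exactly. A secondary point is uniqueness "nearby." It holds only within the implicit-function-theorem neighbourhood, and I would state it in that local sense rather than globally, unless the contractive regime of \Cref{cor:global_unique} is additionally assumed.
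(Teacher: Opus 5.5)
Your proposal is correct and follows essentially the same route as the paper. Both arguments apply the implicit function theorem with $\Psi := -\partial_{\boldsymbol\theta}F$ at $(\bx^*,\boldsymbol\theta^0)$, which yields exactly the positive-signed entries of \eqref{eq:psi_def}, so no further sign adjustment is needed. Both then obtain the entries from the critical-point slopes via $\partial_{\theta_{ik}}f_i = -\partial_{x_k}f_i$, get the row scaling $\Psi^{\rm bcw}=D\Psi^{\rm prod}$ from \eqref{eq:slope_ratio}, and derive \eqref{eq:eq_dev_bound} by submultiplicativity. Your remarks that identifying $\sgn(w_{ik})$ with $w_{ik}$ needs the unit-magnitude convention, and that uniqueness is only local, correctly make explicit points the paper leaves implicit.
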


\begin{proof}
Define $F^{\rm prod}(\bx,\boldsymbol\theta) := \boldsymbol\kappa\odot
\mathbf{f}^{\rm prod}(\bx;\boldsymbol\theta) - \Gamma\bx$ and
analogously for $F^{\rm bcw}$. Both vanish at
$(\bx^{*}, \boldsymbol\theta^0)$ and have non-singular $\bx$-Jacobian
by assumption, so the implicit function theorem applies, giving
\[
\tilde\bx(\boldsymbol\theta) - \bx^* \;=\; -\Bigl(\frac{\partial F}
{\partial\bx}\Bigr)^{\!-1} \frac{\partial F}{\partial\boldsymbol\theta}
\Bigr|_{(\bx^*,\boldsymbol\theta^0)}(\boldsymbol\theta-\boldsymbol\theta^0)
+ O(\|\boldsymbol\theta-\boldsymbol\theta^0\|^2).
\]
Setting $\Psi := -\partial F/\partial\boldsymbol\theta$ at
$(\bx^*,\boldsymbol\theta^0)$ recovers \eqref{eq:eq_dev_prod} and
\eqref{eq:eq_dev_bcw}. For the component formulas, observe that
$f^{\rm prod}_i$ and $f^{\rm bcw}_i$ depend on $\theta_{jk}$ only when
$j = i$ (i.e.\ on the thresholds of gene $i$ itself), which produces
the indicator $\mathbf{1}_{\{j=i\}}$ in \eqref{eq:psi_def}. For $j = i$ and
$k$ a regulator of $i$, $u_k = w_{ik}\lambda(x_k-\theta_{ik})$, so
$\partial u_k/\partial\theta_{ik} = -w_{ik}\lambda$. At the critical
point this gives $\partial f^{\rm prod}_i/\partial\theta_{ik}|_{\rm cp}
= -w_{ik}\lambda/2^{m_i+1}$ and $\partial f^{\rm bcw}_i/
\partial\theta_{ik}|_{\rm cp} = -w_{ik}\lambda(2^{m_i}-1)/4^{m_i}$.
Multiplying by $\kappa_i$ and absorbing the overall sign into $\Psi$
yields \eqref{eq:psi_def}. The rescaling identity
\eqref{eq:psi_rescale} follows directly from \Cref{prop:slope_ratio},
and the operator-norm bound~\eqref{eq:eq_dev_bound} is standard.
\end{proof}

\Cref{prop:robustness_eq} quantifies the sensitivity of the shared
equilibrium to threshold perturbations. The structural
identity \eqref{eq:psi_rescale} reflects the row-wise rescaling we
already observed for the Jacobian: the parameter-sensitivity matrix
of the bias-corrected weighted formulation is the
$D$-rescaling of the corresponding product-of-logistics matrix.
Operationally, this means that any parameter mis-specification
$\boldsymbol\eta$ propagates to a $1$-to-$(2-2^{1-m_i})$ times
larger displacement of the equilibrium in the bias-corrected
formulation per row.

\subsection{Linearised dynamics at a shared equilibrium}
\label{subsec:linearisation}

The shared equilibrium of \Cref{thm:shared_equilibrium} is a point at
which the regulatory functions agree, but the local dynamics depend
on the \emph{slopes} of the regulatory functions, which differ by the
factor $2-2^{1-m_i}$ established in \Cref{prop:slope_ratio}. The
resulting Jacobians are related by an explicit transformation.

\begin{proposition}[Linearisation comparison at a shared equilibrium]
\label{prop:linearisation}
Let $\bx^*$ be a shared critical-point equilibrium as in
\Cref{thm:shared_equilibrium}. Define the diagonal matrix
\begin{equation}
D \;:=\; \operatorname{diag}\bigl(2-2^{1-m_1},\ldots,
                                  2-2^{1-m_n}\bigr)\in\R^{n\times n},
\label{eq:D_diag}
\end{equation}
and the degradation matrix $\Gamma := \operatorname{diag}
(\gamma_1,\ldots,\gamma_n)$. Let $J^{\mathrm{prod}}, J^{\mathrm{bcw}}
\in\R^{n\times n}$ denote the Jacobians of the right-hand sides of
\eqref{eq:dyn_prod} and \eqref{eq:dyn_bcw} at $\bx^*$. Then
\begin{equation}
\boxed{\;\;
J^{\mathrm{bcw}} \;=\; D\bigl(J^{\mathrm{prod}} + \Gamma\bigr) - \Gamma.
\;\;}
\label{eq:jacobian_relation}
\end{equation}
In particular,
\begin{equation}
J^{\mathrm{bcw}}_{il} \;=\; (2-2^{1-m_i})\,J^{\mathrm{prod}}_{il}
\qquad\text{for }i\ne l,
\label{eq:jacobian_offdiag}
\end{equation}
and the off-diagonal entries are uniformly amplified by the row-wise
factor $2-2^{1-m_i}\in[1,2)$, while the diagonal entries are
transformed according to \eqref{eq:jacobian_relation}.
\end{proposition}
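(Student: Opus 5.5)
The plan is to compute both Jacobians entrywise at $\bx^*$ and compare them row by row using \Cref{prop:slope_ratio}. Both systems have the form $\dot x_i = \kappa_i f_i(\bx) - \gamma_i x_i$. Their Jacobians are therefore $J_{il} = \kappa_i\,\partial_{x_l} f_i(\bx^*) - \gamma_i\delta_{il}$, with $f_i = \fprod$ or $f_i = \fwgtbc$ respectively. Rearranging gives $J^{\mathrm{prod}}+\Gamma = (\kappa_i\,\partial_{x_l}\fprod|_{\bx^*})_{il}$, and analogously for the bcw system. The identity \eqref{eq:jacobian_relation} is then equivalent to the row-wise statement
\[
\kappa_i\,\partial_{x_l}\fwgtbc\big|_{\bx^*} \;=\; (2-2^{1-m_i})\,\kappa_i\,\partial_{x_l}\fprod\big|_{\bx^*}
\qquad\text{for all } i,l .
\]

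To verify this, I first use \Cref{thm:shared_equilibrium}: under \eqref{eq:self_consistency}, $\bx^*$ is simultaneously the critical point $\bx_i^{(\mathrm{cp})}$ of every gene $i$. Hence \Cref{prop:slope_ratio} applies in each row. Next I split into two cases according to whether $l$ is a regulator of $i$.

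\textbf{Case $w_{il}\neq 0$.} Here \eqref{eq:slope_prod} and \eqref{eq:slope_wgt} give the two slopes explicitly. With the convention $|w_{il}|=1$ of \Cref{rem:magnitude_rescaling} we have $w_{il}=\sgn(w_{il})$, so the two slopes have the same sign. Their ratio is then $+(2-2^{1-m_i})$ with the sign included, not merely in absolute value.

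\textbf{Case $w_{il}=0$.} Neither $\fprod$ nor $\fwgtbc$ depends on $x_l$, so both sides vanish.

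This also covers the diagonal entry $l=i$ when gene $i$ self-regulates. Multiplying $D$ from the left scales row $i$ by $2-2^{1-m_i}$. This yields $J^{\mathrm{bcw}}+\Gamma = D(J^{\mathrm{prod}}+\Gamma)$, which is \eqref{eq:jacobian_relation}.

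The off-diagonal statement \eqref{eq:jacobian_offdiag} follows immediately, because $\Gamma$ is diagonal and therefore does not affect entries with $i\neq l$. For the range $[1,2)$, \Cref{prop:slope_ratio} shows that $2-2^{1-m}$ is increasing in $m$, equals $1$ at $m=1$, and stays strictly below $2$. The degenerate case $m_i=0$, which gives factor $1$ and a zero row, is consistent as well.

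The only delicate point is the sign bookkeeping. The paper writes $\sgn(w_{il})$ in \eqref{eq:slope_prod} but $w_{il}$ in \eqref{eq:slope_wgt}, so the unit-magnitude convention must be invoked explicitly. Without it, real magnitudes would enter the two formulas differently, and the relation would need $|w_{il}|$ placed in the product form as well. I would state this convention at the start of the proof.
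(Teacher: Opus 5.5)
Your proof is correct and is essentially the paper's argument: both write $J+\Gamma$ entrywise as $\kappa_i\,\partial_{x_l}f_i(\bx^*)$ and apply \Cref{prop:slope_ratio} row by row. Your explicit handling of non-regulators ($w_{il}=0$) and of the unit-magnitude sign convention spells out what the paper leaves implicit. One small slip in your aside: for $m_i=0$ the factor is $2-2^{1-0}=0$, not $1$. The relation still holds trivially there because the row of $J^{\mathrm{prod}}+\Gamma$ vanishes, but the range $[1,2)$, and the bias $b_i$ itself, require $m_i\ge 1$.
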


\begin{proof}
Writing $\mathbf{F}^{\rm bcw}(\bx) = \boldsymbol{\kappa}\odot
\mathbf{f}^{\rm wgt,bc}(\bx) - \Gamma\bx$ (with $\odot$ denoting
component-wise product) and similarly for the product system, we have
$J^{\rm bcw}_{il} = \kappa_i\partial_l\fwgtbc(\bx^*) - \gamma_i\delta_{il}$.
By \Cref{prop:slope_ratio}, at the critical point
$\partial_l\fwgtbc(\bx^*) = (2-2^{1-m_i})\,\partial_l\fprod(\bx^*)$,
so $\kappa_i\partial_l\fwgtbc = (2-2^{1-m_i})\kappa_i\partial_l\fprod
= (2-2^{1-m_i})(J^{\rm prod}_{il} + \gamma_i\delta_{il})$. Hence
\[
J^{\rm bcw}_{il}
\;=\; (2-2^{1-m_i})(J^{\rm prod}_{il}+\gamma_i\delta_{il})
       \;-\;\gamma_i\delta_{il},
\]
which is the $(i,l)$-entry of $D(J^{\rm prod}+\Gamma)-\Gamma$. The
off-diagonal case $i\ne l$ has $\delta_{il}=0$, recovering
\eqref{eq:jacobian_offdiag}.
\end{proof}

\Cref{prop:linearisation} has direct dynamical consequences. The map
$M \mapsto D(M + \Gamma) - \Gamma$ preserves the diagonal $-\Gamma$
contribution of degradation but rescales the gain-of-regulation part
of the Jacobian by row-wise factors $2-2^{1-m_i}\in[1,2)$. In
particular:
\begin{enumerate}\itemsep2pt
\item \textbf{Trace}: 
$\operatorname{tr}(J^{\rm bcw}) - \operatorname{tr}(J^{\rm prod})
= \sum_i(2-2^{1-m_i}-1)\bigl(J^{\rm prod}_{ii}+\gamma_i\bigr)
= \sum_i(1-2^{1-m_i})\,\kappa_i\partial_i\fprod(\bx^*)$.
The trace changes only through self-regulation contributions
($w_{ii}\ne 0$); for networks without self-regulation,
$\operatorname{tr}(J^{\rm bcw}) = \operatorname{tr}(J^{\rm prod})$
and the spectral barycentre is preserved.
\item \textbf{Off-diagonal amplification}: every off-diagonal coupling
is amplified by the row-wise factor $2-2^{1-m_i}$. Bounded above by
$2$, this amplification can shift eigenvalues either by enhancing
oscillation frequencies (cooperative crossings) or by destabilising
saddles (when off-diagonals control the sign of subprincipal minors).
\item \textbf{Symmetric homogeneous case}: if all genes have the same
regulator count $m_i = m$, then $D = (2-2^{1-m})I$ and
\eqref{eq:jacobian_relation} reduces to
$J^{\rm bcw} = (2-2^{1-m})J^{\rm prod} + (1-2^{1-m})\Gamma$. For
the standard case $\gamma_i = \gamma$ for all $i$, the eigenvalues of
$J^{\rm bcw}$ are an affine transform of those of $J^{\rm prod}$:
\begin{equation}
\mu^{\rm bcw} \;=\; (2-2^{1-m})\,\mu^{\rm prod} + (1-2^{1-m})\gamma.
\label{eq:eigenvalue_map}
\end{equation}
Equivalently, $\mu^{\rm bcw} + \gamma = (2-2^{1-m})(\mu^{\rm prod}+\gamma)$.
\end{enumerate}

The identity \eqref{eq:eigenvalue_map} has a striking consequence: the
affine map $\mu^{\rm prod}\mapsto\mu^{\rm bcw}$ has the unique real
fixed point $\mu = -\gamma$. Modes of $J^{\rm prod}$ with $\operatorname{Re}\mu^{\rm prod}= -\gamma$
are therefore mapped to modes of $J^{\rm bcw}$ with $\operatorname{Re}\mu^{\rm bcw} =
-\gamma$, irrespective of their imaginary parts.

\begin{corollary}[Stability dichotomy at shared equilibria]
\label{cor:stability_dichotomy}
In the symmetric homogeneous case ($m_i = m$, $\gamma_i = \gamma$ for
all $i$), the local stability margin at a shared equilibrium changes
under the bias correction according to the formula
\begin{equation}
\operatorname{Re}\mu^{\rm bcw} - \operatorname{Re}\mu^{\rm prod}
\;=\; (1-2^{1-m})\,\bigl(\operatorname{Re}\mu^{\rm prod} + \gamma\bigr).
\label{eq:stability_shift}
\end{equation}
Consequently, each eigenvalue $\mu^{\rm prod}$ satisfies:
\begin{enumerate}\itemsep1pt
\item[(a)] $\operatorname{Re}\mu^{\rm prod} > -\gamma$
            $\;\Longrightarrow\;$
           $\operatorname{Re}\mu^{\rm bcw} > \operatorname{Re}\mu^{\rm prod}$
           (\emph{the bias correction destabilises slowly-decaying
           or unstable modes});
\item[(b)] $\operatorname{Re}\mu^{\rm prod} < -\gamma$
            $\;\Longrightarrow\;$
           $\operatorname{Re}\mu^{\rm bcw} < \operatorname{Re}\mu^{\rm prod}$
           (\emph{the bias correction further stabilises fast-decaying
           modes});
\item[(c)] $\operatorname{Re}\mu^{\rm prod} = -\gamma$
            $\;\Longrightarrow\;$
           $\operatorname{Re}\mu^{\rm bcw} = -\gamma$
           (\emph{the fixed-point real part is preserved exactly}).
\end{enumerate}
The threshold $\operatorname{Re}\mu^{\rm prod} = -\gamma$ corresponds
to the case where the regulation part of the Jacobian contributes a
purely imaginary spectrum to $\mu^{\rm prod}$. For networks without
self-regulation, $J^{\rm prod} = -\gamma I +
(\kappa\lambda/2^{m+1})\,A$ with $A$ the signed adjacency matrix, and
this case arises whenever $A$ has purely imaginary eigenvalues---a
common situation on negative-feedback cyclic motifs.
\end{corollary}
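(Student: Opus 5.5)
The plan is to read everything off the eigenvalue map \eqref{eq:eigenvalue_map} obtained from \Cref{prop:linearisation} in the symmetric homogeneous case. With $m_i=m$ and $\gamma_i=\gamma$ we have $D=dI$, where $d:=2-2^{1-m}$. Then \eqref{eq:jacobian_relation} becomes $J^{\rm bcw}=dJ^{\rm prod}+(d-1)\Gamma = dJ^{\rm prod}+(d-1)\gamma I$. This is a polynomial of degree one in $J^{\rm prod}$, so by the spectral mapping theorem (or by applying the identity directly to an eigenvector) each eigenvalue $\mu^{\rm prod}$ corresponds to the eigenvalue $\mu^{\rm bcw}=d\mu^{\rm prod}+(d-1)\gamma$, with multiplicities preserved. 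This gives the pairing of eigenvalues that the corollary uses.

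Next I take real parts. Since $d$ and $\gamma$ are real, $\operatorname{Re}\mu^{\rm bcw}=d\operatorname{Re}\mu^{\rm prod}+(d-1)\gamma$. Subtracting $\operatorname{Re}\mu^{\rm prod}$ gives $(d-1)(\operatorname{Re}\mu^{\rm prod}+\gamma)$, and $d-1=1-2^{1-m}$, which is exactly \eqref{eq:stability_shift}.

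For the three cases, note that $1-2^{1-m}>0$ when $m\ge2$. The sign of the shift is then the sign of $\operatorname{Re}\mu^{\rm prod}+\gamma$, which gives (a), (b) and (c) immediately; in case (c), substituting back yields $\operatorname{Re}\mu^{\rm bcw}=-\gamma$. When $m=1$ the factor vanishes and the implications in (a) and (b) become equalities. So the strict statements require $m\ge 2$, and I would add this caveat explicitly, as the main (small) obstacle.

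For the closing remark, I use the case of no self-regulation, where every diagonal entry of $J^{\rm prod}$ equals $-\gamma$. By \eqref{eq:slope_prod}, the off-diagonal entries are $\kappa_i\operatorname{sgn}(w_{il})\lambda/2^{m+1}$, so with a homogeneous $\kappa$ we get $J^{\rm prod}=-\gamma I+(\kappa\lambda/2^{m+1})A$. The eigenvalues are therefore $-\gamma$ plus a scaled spectrum of $A$, and $\operatorname{Re}\mu^{\rm prod}=-\gamma$ holds exactly when the corresponding eigenvalue of $A$ is purely imaginary. For example, the matrix of a two-cycle with opposite signs has eigenvalues $\pm i$.
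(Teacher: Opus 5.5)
Your proposal is correct and follows the paper's proof. The paper likewise takes real parts of the eigenvalue map $\mu^{\rm bcw}=(2-2^{1-m})\mu^{\rm prod}+(1-2^{1-m})\gamma$, which it obtains from \eqref{eq:jacobian_relation} with $D=(2-2^{1-m})I$. It then subtracts $\operatorname{Re}\mu^{\rm prod}$ and reads off the sign from $1-2^{1-m}>0$, which it too only asserts for $m\ge 2$, so your explicit $m\ge 2$ caveat is a fair sharpening. One small point on your illustration: a sign-alternating two-cycle without self-regulation has $m_i=1$, which is exactly the degenerate case you flagged (there $b_i=0$ and $J^{\rm bcw}=J^{\rm prod}$). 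The paper's three-node motif with $m=2$, where $A$ has spectrum $\{0,\pm i\sqrt{3}\}$, is the more apt example.
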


\begin{proof}
Take real parts on both sides of \eqref{eq:eigenvalue_map}:
$\operatorname{Re}\mu^{\rm bcw} = (2-2^{1-m})\operatorname{Re}\mu^{\rm prod}
+ (1-2^{1-m})\gamma$. Subtract $\operatorname{Re}\mu^{\rm prod}$:
$\operatorname{Re}\mu^{\rm bcw} - \operatorname{Re}\mu^{\rm prod} =
(1-2^{1-m})\operatorname{Re}\mu^{\rm prod} + (1-2^{1-m})\gamma =
(1-2^{1-m})(\operatorname{Re}\mu^{\rm prod}+\gamma)$. The factor
$(1-2^{1-m})$ is strictly positive for $m\ge 2$, so the sign of the
shift coincides with the sign of $\operatorname{Re}\mu^{\rm prod}+\gamma$.
\end{proof}

\Cref{cor:stability_dichotomy} provides an actionable design
implication. In a network model where the product-of-logistics
Jacobian has eigenvalues clustered near $\operatorname{Re}\mu = -\gamma$
(the natural decay rate of an isolated gene), the bias-corrected
weighted formulation preserves the stability margin. Modes that decay
slower than $-\gamma$ (e.g.\ due to strong positive feedback) are
\emph{magnified} by the bias correction and may bifurcate to
instability that is not present in the product. Conversely, fast-decaying
modes are made even faster. This anisotropic
re-scaling of the spectrum is qualitatively different from a uniform
gain modification.

\begin{example}[3-node repressilator-with-activator motif]
\label{ex:3node_eigenvalues}
Consider three genes in a cycle, each with one cross-activator and one
cross-repressor ($m_i = 2$ for all $i$), and no self-regulation:
gene $1$ is activated by gene $2$ and repressed by gene $3$, gene $2$
by gene $3$ and gene $1$ respectively, and gene $3$ by gene $1$ and
gene $2$ respectively. Set $\kappa_i = \kappa$ and $\gamma_i = \gamma$
for all $i$. The self-consistency condition
\eqref{eq:self_consistency} reduces to $\theta_{ij} = \kappa/(4\gamma)$
for every interacting pair $(i,j)$. At the shared equilibrium
$x_i^* = \kappa/(4\gamma)$, the Jacobian of the product-of-logistics
is
\[
J^{\mathrm{prod}} \;=\; -\gamma I + \frac{\kappa\lambda}{8}\,A,
\]
where $A$ is the adjacency matrix with $A_{12}=A_{23}=A_{31}=+1$
(activations) and $A_{13}=A_{21}=A_{32}=-1$ (repressions). The matrix
$A$ has eigenvalues $0,\pm i\sqrt{3}$, so the eigenvalues of
$J^{\mathrm{prod}}$ are $-\gamma$ and $-\gamma\pm i\sqrt{3}\kappa\lambda/8$.
By \eqref{eq:eigenvalue_map} with $m=2$,
\[
\mu^{\mathrm{bcw}} \;=\; \tfrac{3}{2}\mu^{\mathrm{prod}} +
                         \tfrac{1}{2}\gamma,
\]
giving $\mu^{\mathrm{bcw}} = -\gamma$ for the real eigenvalue and
$\mu^{\mathrm{bcw}} = -\gamma\pm i\,(3\sqrt{3}/16)\kappa\lambda$ for
the complex pair. All eigenvalues of $J^{\rm prod}$ in this motif lie
on the line $\operatorname{Re}\mu = -\gamma$, so by
\Cref{cor:stability_dichotomy}(c) the bias correction preserves their
real parts exactly; the only effect of the bias is to amplify the
imaginary parts by the factor $3/2 = 2-2^{1-m}$, increasing the local
oscillation frequency at the shared equilibrium by $50\%$ in the
bias-corrected formulation.
\end{example}

\Cref{ex:3node_eigenvalues} illustrates a practical caveat in moving
between formulations: even when the equilibrium location is preserved
(via the self-consistency condition), the linearised dynamics around
that equilibrium can differ by sizeable factors. A linear feedback
controller designed for one formulation will, in general, exhibit
modified loop gain when applied to the other. The factor
$2-2^{1-m_i}$ from \Cref{prop:slope_ratio} provides an explicit
correction, and the stability-dichotomy formula
\eqref{eq:stability_shift} determines whether the correction is
stabilising or destabilising for each spectral mode.

\section{Application: A Two-Gene Negative-Feedback Oscillator with Self-Activation}
\label{sec:application}

We illustrate the equilibrium-correspondence and linearisation results
of \Cref{sec:equilibrium} on a canonical 2-gene regulatory motif: a
negative feedback loop with positive autoregulation, exhibiting
sustained oscillations via Hopf bifurcation. This motif is among the
simplest gene-circuit topologies known to produce limit-cycle
behaviour without delays, and its symmetric structure makes the
spectral predictions of \Cref{prop:linearisation,cor:stability_dichotomy}
fully explicit.

This setting complements three prior analyses on related two-gene
networks. The Vinoth \emph{et~al.}~\cite{vinoth2025extreme} model is a
delay-coupled DDE in which the topology is \emph{mutual cross-activation
plus cooperative self-repression with delays}: the genes are
$A\!\to\!B$ and $B\!\to\!A$ at the activation level and $A\!\dashv\!A$,
$B\!\dashv\!B$ at the repression level, and oscillations arise via a
delay-induced Hopf bifurcation. Belgacem~\cite{belgacem2026beyond}
analyses the same Vinoth network under two logistic reformulations and
derives the closed-form parameter expressions $\kappa_1 = 4 g_A$,
$\theta_B = g_A$, $\lambda_1 = \ln 3/g_A$ for the weighted-logistic
formulation, with critical self-repression delay
$\tau_c \approx 1.64$ min. The factors $4 = 2^{m_i}$ and
$\ln 3 = \log(2^{m_i}-1)$ in those parameters are
\emph{not coincidences}: they coincide with the saturation
exponent and the bias-correction term of the present framework
(\Cref{def:bcw_model}) at $m_i = 2$, so Belgacem's weighted-logistic
formulation is precisely the bias-corrected weighted formulation of
this paper applied to the Vinoth DDE topology, with parameters fixed
by basal-rate and slope-matching against the Hill-linear baseline.
Belgacem~\cite{belgacem2026sustained} further establishes sustained
limit cycles in the logistic two-gene oscillator through a
delay-driven Hopf-bifurcation analysis, with explicit critical-delay
characterisations and Lyapunov-coefficient computations confirming
the bifurcation criticality; that result, complementary to the
present delay-free analysis, illustrates how the bcw framework
extends naturally into the delay-coupled regime.

In contrast, the present \Cref{sec:application} uses a different
topology --- \emph{mutual self-activation plus asymmetric cross-regulation
without delays} (specified below) --- chosen so that the Hopf
bifurcation is parameter-induced rather than delay-induced and the
characteristic polynomial reduces to a $2\times 2$ algebraic problem
without transcendental delay terms. The three settings are
therefore complementary: Belgacem's two prior analyses
\cite{belgacem2026beyond,belgacem2026sustained} specialise the bcw
framework to the Vinoth DDE topology and derive delay-dependent Hopf
curves and sustained limit-cycle existence; the analysis below
specialises it to a delay-free oscillator and derives a clean
algebraic eigenvalue map that makes the $D = 3/2$ frequency
amplification and the $2/3$ Hopf-threshold ratio transparent.

\subsection{Two-gene oscillator: topology and self-consistent thresholds}
\label{subsec:hopf_motif}

Consider two genes with the regulatory wiring
\[
w_{AA} = +1,\quad w_{AB} = -1,\qquad
w_{BB} = +1,\quad w_{BA} = +1,
\]
so each gene self-activates ($w_{ii}>0$), gene $A$ is repressed by
gene $B$ ($w_{AB}<0$), and gene $B$ is activated by gene $A$
($w_{BA}>0$). Each gene therefore has $m_i = 2$ regulators
(one self-regulator plus one cross-regulator). The closed feedback
loop $A\!\to\!B\!\dashv\!A$ is negative; combined with the local
positive autoregulation, the architecture is a textbook
positive-plus-negative feedback motif of the kind underlying many
biological clocks~\cite{elowitz2000synthetic,gardner2000construction}.

\paragraph*{Threshold structure under self-consistency.}
The motif involves four threshold parameters, $\theta_{AA}, \theta_{AB},
\theta_{BA}, \theta_{BB}$, with $\theta_{ij}$ interpreted biologically
as the concentration of regulator $j$ at which it exerts half-maximal
effect on the regulation of gene $i$ (an $\mathrm{EC}_{50}$ for
activation or $\mathrm{IC}_{50}$ for repression). \emph{In principle
these four thresholds are independent intrinsic biological quantities}
(binding affinities $K_d$, etc.) and need not coincide with any
equilibrium concentration. We \emph{choose} them, however, to
satisfy the self-consistency condition of
\Cref{thm:shared_equilibrium}, which makes the product-of-logistics
and the bias-corrected weighted formulations share equilibria. The
self-consistency identity is derived as follows: at the
``critical point'' where all regulators sit at their respective
thresholds, the product-of-logistics evaluates to $1/2^{m_i} = 1/4$;
the equilibrium condition $\dot x_i = 0$ at this configuration yields
$x_i = \kappa_i/(\gamma_i\cdot 2^{m_i})$. Imposing that this candidate
equilibrium coincides with the threshold seen by other genes (the
self-consistency identity $\theta_{ij} = \kappa_j/(\gamma_j 2^{m_j})$)
determines a specific parameter choice expressed entirely in terms of
$(\kappa, \gamma, m)$:
\begin{equation}
\theta_A \;:=\; \frac{\kappa_A}{4\gamma_A},
\qquad
\theta_B \;:=\; \frac{\kappa_B}{4\gamma_B},
\label{eq:hopf_thetaAB}
\end{equation}
with $\theta_{AA} = \theta_{BA} = \theta_A$ (both involve regulator
$A$) and $\theta_{AB} = \theta_{BB} = \theta_B$ (both involve regulator
$B$). These are \emph{chosen parameter values}, deterministic
functions of $(\kappa, \gamma)$; they are not derived by solving the
dynamics. Genes $A$ and $B$ are distinct species, so $\theta_A$ and
$\theta_B$ are in general distinct biological scales. We adopt
uniform degradation $\gamma_A = \gamma_B = \gamma$ (necessary for the
clean eigenvalue map $\mu^{\rm bcw} = (3/2)\,\mu^{\rm prod} +
(1/2)\gamma$ of \Cref{prop:linearisation}) but \emph{distinct}
synthesis rates $\kappa_A \ne \kappa_B$, so that the two thresholds
$\theta_A = \kappa_A/(4\gamma)$ and $\theta_B = \kappa_B/(4\gamma)$
take different numerical values reflecting the gene-specific kinetics.
With this parameter choice, the dynamical system admits the
shared critical-point equilibrium
\begin{equation}
\bx^* \;=\; (\theta_A,\,\theta_B) \;=\;
\left(\frac{\kappa_A}{4\gamma},\,\frac{\kappa_B}{4\gamma}\right),
\label{eq:hopf_xstar}
\end{equation}
at which both $\fprod$ and $\fwgtbc$ evaluate to $1/2^{m_i} = 1/4$
(\Cref{thm:shared_equilibrium}). The identity $x_j^{*} = \theta_j$
is then a \emph{consequence} of the parameter choice, not its
definition.

\subsection{The four formulations}
\label{subsec:hopf_four}

The four sigmoidal formulations of \Cref{sec:formulation} produce the
following explicit dynamics on the motif:

\paragraph*{Product-of-logistics.}
\begin{align}
\dot{A} &= \kappa_A\,
   \underbrace{\sigma\!\bigl(\lambda(A-\theta_A)\bigr)}_{\text{self-activation}}
   \cdot\,
   \underbrace{\sigma\!\bigl(\lambda(\theta_B-B)\bigr)}_{\text{repression by }B}
   \;-\;\gamma\,A,
\label{eq:hopf_prod_A}\\
\dot{B} &= \kappa_B\,
   \underbrace{\sigma\!\bigl(\lambda(B-\theta_B)\bigr)}_{\text{self-activation}}
   \cdot\,
   \underbrace{\sigma\!\bigl(\lambda(A-\theta_A)\bigr)}_{\text{activation by }A}
   \;-\;\gamma\,B.
\label{eq:hopf_prod_B}
\end{align}

\paragraph*{Bias-corrected weighted.}
With $S_A = (A-\theta_A) - (B-\theta_B)$ and
$S_B = (B-\theta_B)+(A-\theta_A)$, and $b_i = -(\log 3)/\lambda$,
\begin{align}
\dot{A} \;=\; \kappa_A\,\frac{1}{1 + 3\,e^{-\lambda S_A}}
   \;-\;\gamma\,A,\qquad
\dot{B} \;=\; \kappa_B\,\frac{1}{1 + 3\,e^{-\lambda S_B}}
   \;-\;\gamma\,B.
\label{eq:hopf_bcw}
\end{align}

\paragraph*{Unified weighted-logistic (no bias).}
\begin{align}
\dot{A} \;=\; \kappa_A\,\frac{1}{1 + e^{-\lambda S_A}}
   \;-\;\gamma\,A,\qquad
\dot{B} \;=\; \kappa_B\,\frac{1}{1 + e^{-\lambda S_B}}
   \;-\;\gamma\,B.
\label{eq:hopf_wgt}
\end{align}

\paragraph*{Samuilik weighted-sum.}
The canonical Samuilik threshold prescription gives
$\theta_A^{\mathrm{S}} = \tfrac{1}{2}(w_{AA}^{\mathrm{S}} +
w_{AB}^{\mathrm{S}}) = \tfrac{1}{2}(1-1) = 0$ and
$\theta_B^{\mathrm{S}} = \tfrac{1}{2}(w_{BB}^{\mathrm{S}} +
w_{BA}^{\mathrm{S}}) = \tfrac{1}{2}(1+1) = 1$. Setting the Samuilik
steepness $\mu_i = \lambda$ to align the comparison with the other
three formulations, this gives
\begin{align}
\dot{A} \;=\; \kappa_A\,\frac{1}{1 + e^{-\lambda(A-B)}}
   \;-\;\gamma\,A,\qquad
\dot{B} \;=\; \kappa_B\,\frac{1}{1 + e^{-\lambda(A+B-1)}}
   \;-\;\gamma\,B.
\label{eq:hopf_sam}
\end{align}
The mixed-sign content of gene $A$ ($w^{\mathrm S}_{AA}=+1,
w^{\mathrm S}_{AB}=-1$) collapses $\theta_A^{\mathrm S}$ to zero, an
instance of the pathological threshold-degeneration behaviour
catalogued in \Cref{subsec:mixed}.

\subsection{Equilibrium structure}
\label{subsec:hopf_equilibria}

Under the self-consistency condition \eqref{eq:hopf_thetaAB}, the
product-of-logistics and the bias-corrected weighted formulation
share the canonical equilibrium
$\bx^*_{\rm prod} = \bx^*_{\rm bcw} = (\theta_A, \theta_B) =
(\kappa_A/(4\gamma),\kappa_B/(4\gamma))$, since at this configuration
$S_A = S_B = 0$ and both functions evaluate to $\fprod = \fwgtbc =
1/4$, giving $\kappa_i\cdot\tfrac{1}{4} = \gamma\,\theta_i$ for each
gene $i$.

The unbiased weighted formulation \eqref{eq:hopf_wgt} does not admit
$\bx^*$ as an equilibrium under the same self-consistency
parameters: at the point $S_A = S_B = 0$ (forced by $A = \theta_A,
B = \theta_B$), $\fwgt$ evaluates to $1/2$ rather than $1/4$, hence
$\kappa_i \cdot \tfrac{1}{2} = 2\gamma\theta_i \ne \gamma\theta_i$.
The system therefore settles to an \emph{asymmetric}, non-shared
attractor (\Cref{fig:dynamics}, panel (c), orange dot), typically a
winner-take-all state in which one gene saturates while the other is
repressed.

The Samuilik formulation \eqref{eq:hopf_sam}, with its canonical
thresholds $(\theta_A^{\mathrm{S}},\theta_B^{\mathrm{S}}) = (0,1)$,
admits the equilibrium $\bx^*_{\mathrm{sam}}$ at which both Samuilik
arguments $(A-B)$ and $(A+B-1)$ vanish, giving
$A = B = 1/2$ and $\kappa_i\cdot\tfrac{1}{2} = \gamma\,(1/2)$ — a
configuration consistent only when $\kappa_A = \kappa_B = \gamma$.
For general $(\kappa_A, \kappa_B)$, the Samuilik equilibrium is
determined by the nonlinear system $A = \kappa_A\,\sigma(\lambda
(A-B))/\gamma$, $B = \kappa_B\,\sigma(\lambda(A+B-1))/\gamma$, and
in general differs from both $\bx^*$ and $\bx^*_{\rm wgt}$,
displaying the equilibrium misalignment that distinguishes Samuilik
from the product-of-logistics already at the level of critical
points.

\subsection{Linearisation and Hopf bifurcation}
\label{subsec:hopf_linearisation}

The Jacobian of the product-of-logistics at the shared equilibrium
$\bx^*$ is
\begin{equation}
J^{\mathrm{prod}} \;=\; -\gamma\,I + \frac{\lambda}{8}\,K\,W,
\qquad
K = \begin{pmatrix}\kappa_A & 0 \\ 0 & \kappa_B\end{pmatrix},
\qquad
W = \begin{pmatrix} +1 & -1 \\ +1 & +1 \end{pmatrix},
\end{equation}
since for $m_i = 2$ each partial derivative
$\partial_l \fprod\big|_{\bx^*} = w_{il}\lambda\sigma'(0)\,\sigma(0)
= w_{il}\lambda\cdot(1/4)\cdot(1/2) = w_{il}\lambda/8$
by \Cref{prop:slope_ratio} with one differentiated factor and
$m_i-1=1$ non-differentiated factor evaluated at the critical
point. The matrix
$K W$ has trace $\kappa_A+\kappa_B$ and determinant $2\kappa_A
\kappa_B$, hence (when the asymmetry is moderate, $(3-2\sqrt{2})<
\kappa_A/\kappa_B < 3+2\sqrt{2}$) complex conjugate eigenvalues
$(\kappa_A+\kappa_B)/2 \pm i\sqrt{8\kappa_A\kappa_B -
(\kappa_A+\kappa_B)^2}/2$. The eigenvalues of $J^{\mathrm{prod}}$
are therefore
\begin{equation}
\mu^{\mathrm{prod}}_\pm
\;=\;
-\gamma + \frac{\lambda(\kappa_A+\kappa_B)}{16}
\pm i\,\frac{\lambda\sqrt{8\kappa_A\kappa_B-(\kappa_A+\kappa_B)^2}}{16},
\end{equation}
and the equilibrium $\bx^*$ undergoes a Hopf bifurcation when
$\mathrm{Re}(\mu^{\mathrm{prod}}_\pm) = 0$, that is, at
\begin{equation}
\lambda^{\mathrm{prod}}_{\mathrm{Hopf}}
\;=\; \frac{16\,\gamma}{\kappa_A+\kappa_B}.
\label{eq:hopf_threshold_prod}
\end{equation}
By the eigenvalue map of \Cref{prop:linearisation} (with $m_i = 2$
giving $D = 2 - 2^{1-2} = 3/2$) and uniform $\gamma$, the
bias-corrected weighted Jacobian satisfies
$\mu^{\mathrm{bcw}}_\pm = (3/2)\,\mu^{\mathrm{prod}}_\pm +
\tfrac{1}{2}\gamma$, hence
\begin{equation}
\mu^{\mathrm{bcw}}_\pm
\;=\;
-\gamma + \frac{3\lambda(\kappa_A+\kappa_B)}{32}
\pm i\,\frac{3\lambda\sqrt{8\kappa_A\kappa_B-(\kappa_A+\kappa_B)^2}}{32},
\qquad
\lambda^{\mathrm{bcw}}_{\mathrm{Hopf}}
\;=\; \frac{32\,\gamma}{3(\kappa_A+\kappa_B)}.
\label{eq:hopf_threshold_bcw}
\end{equation}
Three structural consequences follow:
\begin{itemize}\itemsep1pt
\item \textbf{Equilibrium agreement.} Both formulations share
$\bx^* = (\theta_A,\theta_B)$ for every $\lambda$
(\Cref{thm:shared_equilibrium}).
\item \textbf{Frequency amplification.} At every fixed $\lambda$,
the oscillation frequency in the bias-corrected formulation is
amplified by the factor $D = 3/2$ relative to the product:
$\mathrm{Im}(\mu^{\mathrm{bcw}}_\pm) = (3/2)\,
\mathrm{Im}(\mu^{\mathrm{prod}}_\pm)$.
\item \textbf{Earlier instability.} The bias-corrected formulation
bifurcates at $\lambda^{\mathrm{bcw}}_{\mathrm{Hopf}} = (2/3)
\lambda^{\mathrm{prod}}_{\mathrm{Hopf}}$, strictly smaller than the
product threshold. For $\lambda\in
[\lambda^{\mathrm{bcw}}_{\mathrm{Hopf}},
\lambda^{\mathrm{prod}}_{\mathrm{Hopf}})$ the bias-corrected system
oscillates while the product-of-logistics is still globally
exponentially stable: an explicit ``destabilising'' instance of the
stability dichotomy \eqref{eq:stability_shift}.
\end{itemize}

\subsection{Numerical illustration}
\label{subsec:hopf_numerics}

We solve the four ODE systems with parameters $\kappa_A = 1$,
$\kappa_B = 1.1$, $\gamma = 1$, $\lambda = 6.5$, and the
self-consistent thresholds $\theta_A = 0.25,\;\theta_B = 0.275$,
starting from $\bx(0) = (\theta_A+0.01,\,\theta_B-0.01)$. The
moderate asymmetry $\kappa_B/\kappa_A = 1.1$ produces
distinct biological scales $\theta_A\ne\theta_B$ for the two genes
while preserving the limit-cycle basin around the shared equilibrium
$\bx^* = (0.25, 0.275)$. The chosen $\lambda = 6.5$ sits between
the two Hopf thresholds:
$\lambda^{\mathrm{bcw}}_{\mathrm{Hopf}} = 32/(3\cdot 2.1) \approx
5.08 < 6.5 < \lambda^{\mathrm{prod}}_{\mathrm{Hopf}} = 16/2.1
\approx 7.62$. Integration uses the adaptive Adams--BDF method
LSODA via R \texttt{deSolve::ode} on $[0, 300]$ with $30{,}001$ time
points; the transient is the first $200$ time units.

\begin{figure}[t]
\centering
\includegraphics[width=0.99\linewidth]{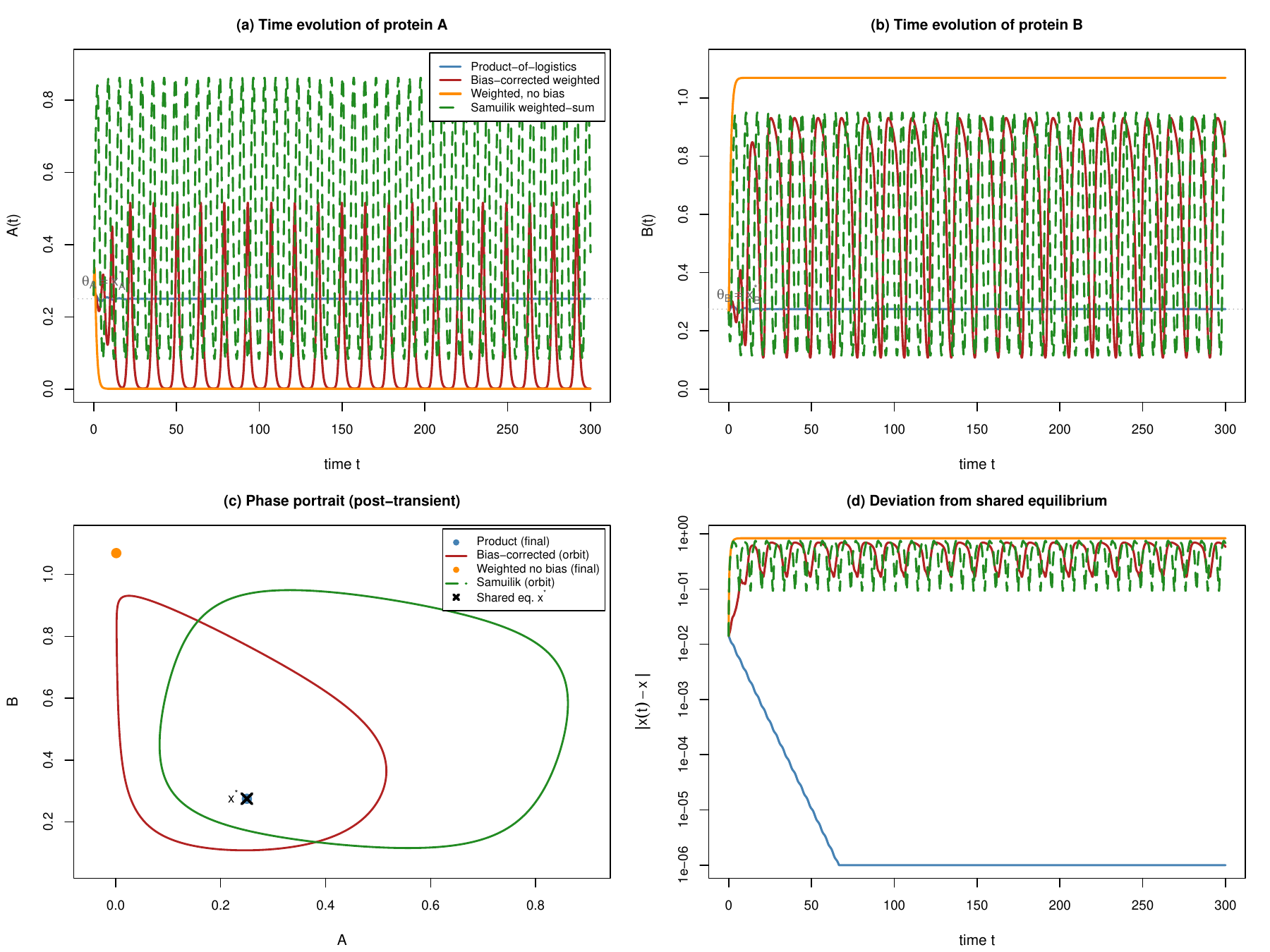}
\caption{Numerical comparison of the four formulations on the
two-gene negative-feedback oscillator at $\lambda = 6.5\in
[\lambda^{\mathrm{bcw}}_{\mathrm{Hopf}},
\lambda^{\mathrm{prod}}_{\mathrm{Hopf}}) = [5.08, 7.62)$ with
asymmetric synthesis $\kappa_A=1,\,\kappa_B=1.1$ and uniform
degradation $\gamma=1$ (distinct thresholds $\theta_A=0.25,\,
\theta_B=0.275$). \textbf{(a,b)} Time evolution of $A$ and $B$. The
product-of-logistics \eqref{eq:hopf_prod_A}--\eqref{eq:hopf_prod_B}
converges to the shared equilibrium $\bx^* = (\theta_A,\theta_B) =
(0.25,\,0.275)$ (stable in this regime); the bias-corrected weighted
formulation \eqref{eq:hopf_bcw} oscillates around the same $\bx^*$
(unstable in this regime); the unbiased weighted formulation
\eqref{eq:hopf_wgt} converges to a non-canonical winner-take-all
state; the Samuilik formulation \eqref{eq:hopf_sam} oscillates
with larger amplitude and around a different mean. \textbf{(c)}
Phase portrait post-transient; the bcw limit cycle and the
product/wgt equilibria are clearly separated. \textbf{(d)}
Deviation $\|\bx(t) - \bx^*\|$ from the shared critical-point
equilibrium $\bx^*$ on a logarithmic scale; only the product
trajectory contracts onto $\bx^*$, while the others depart by
orders of magnitude.}
\label{fig:dynamics}
\end{figure}

\Cref{fig:dynamics} reveals four qualitatively distinct outcomes at
the \emph{same} parameter values:

\begin{itemize}\itemsep2pt
\item The \emph{product-of-logistics} \eqref{eq:hopf_prod_A}--%
\eqref{eq:hopf_prod_B} converges to the canonical equilibrium
$\bx^* = (\theta_A,\theta_B) = (0.25, 0.275)$, consistent with the
linearisation prediction
$\mathrm{Re}(\mu^{\mathrm{prod}}_\pm) = -\gamma +
\lambda(\kappa_A+\kappa_B)/16 = -1 + 6.5\cdot 2.1/16
\approx -0.147 < 0$.
\item The \emph{bias-corrected weighted formulation}
\eqref{eq:hopf_bcw} shares the equilibrium $\bx^*$ but with
$\mathrm{Re}(\mu^{\mathrm{bcw}}_\pm) = -\gamma + 3\lambda
(\kappa_A+\kappa_B)/32 \approx +0.280 > 0$. The trajectory spirals
outward to a limit cycle of approximate range $\approx 0.5$ in
gene-$A$ amplitude.
\item The \emph{unbiased weighted-logistic formulation}
\eqref{eq:hopf_wgt} converges to a non-canonical winner-take-all
equilibrium $\approx (0.001, 1.07)$, far from the shared critical
point. This illustrates the equilibrium mismatch of
\Cref{lem:critical_values}: $\fwgt(\bx^*) = 1/2\ne 1/4 =
\fprod(\bx^*)$.
\item The \emph{Samuilik formulation} \eqref{eq:hopf_sam}
oscillates with larger amplitude around its own (still distinct)
mean --- a limit cycle entirely distinct from the bcw oscillation,
driven by the asymmetric threshold pair
$(\theta_A^{\mathrm S},\theta_B^{\mathrm S}) = (0,1)$ that
the canonical Samuilik prescription forces on the mixed-sign gene.
\end{itemize}

The trajectory deviation panel \Cref{fig:dynamics}(d) summarises
the qualitative ordering: the bias-corrected weighted formulation
shares its equilibrium with the product (the only formulation for
which $\|\bx(t)-\bx^*\|$ remains bounded by an oscillation envelope
around $\bx^*$), while the unbiased and Samuilik alternatives
escape to entirely different attractors.

\subsection{Theoretical predictions confirmed}
\label{subsec:hopf_takeaway}

The Hopf bifurcation analysis makes four theorems of
\Cref{sec:formulation,sec:equilibrium} concrete on a textbook motif.
First, \Cref{prop:bias_canonicity} justifies the specific bias value
$b_i = -\log(2^{m_i}-1)/\lambda = -\log 3/\lambda$ used throughout the
example as the unique single-sigmoid correction that recovers the
product's critical-point value $1/4$ at $m_i=2$, ruling out any other
constant correction. Second, \Cref{thm:shared_equilibrium} forces the
product and the bias-corrected weighted formulation to share the
critical-point equilibrium $\bx^*$ — verified by the numerical
convergence and the linear-stability calculations. Third, the
eigenvalue map of \Cref{prop:linearisation} predicts the exact ratio
$3/2$ between the bcw and product oscillation frequencies near each
formulation's Hopf bifurcation — verified by the closed-form
characteristic equations. Fourth, the stability dichotomy of
\Cref{cor:stability_dichotomy} explains why the bcw formulation
undergoes Hopf bifurcation \emph{earlier} (at smaller $\lambda$) than
the product: the common shift $(1-2^{1-m})\gamma$ destabilises
spectral modes for which $\mathrm{Re}(\mu^{\mathrm{prod}}) > -\gamma$,
which is the case for the complex eigenvalue pair driving the
oscillation. The bias correction therefore acts as a \emph{prediction}
of how the bias-corrected weighted formulation behaves in the parameter
regime where the product is still stable: a structural shift in the
bifurcation diagram, not a mere recalibration of the equilibrium.

\section{Discussion}
\label{sec:discussion}

\subsection{When to use each formulation}
\label{subsec:when_to_use}

The three formulations encode genuinely different modelling
hypotheses, and each is preferable in its own analytical context.

\paragraph*{Product-of-logistics
\eqref{eq:product_of_logistics_intro}.}
The product-of-logistics is the formulation of choice when:
\begin{itemize}\itemsep2pt
\item Multi-regulator interactions are interpreted as multiplicative
occupancy events (independent binding sites, AND-like cooperativity).
\item Bifurcation analysis on canonical motifs (toggle switch,
repressilator, oscillators) is the analytical objective. The
structural symmetry $\sigma''(\theta)=0$ enforced by the product
\eqref{eq:fprod_zero_curv} drives the supercriticality of pitchfork
and Hopf bifurcations through cubic Lyapunov coefficients
(see~\cite{belgacem2025exploring,belgacem2026logistic} for closed-form
results).
\item Each regulator's contribution must be analytically separable for
sensitivity analysis, observer design, or modular extension to
combinatorial logic gates via the De~Morgan product formula.
\end{itemize}

\paragraph*{Bias-corrected weighted-logistic \eqref{eq:bcw_model}.}
The bias-corrected weighted formulation is the formulation of choice
when:
\begin{itemize}\itemsep2pt
\item A single sigmoid per gene is required for tractability of
high-dimensional analysis: linearisation around equilibria, gain
scheduling, model-predictive control synthesis, or large-scale
parameter estimation. Each nonzero off-diagonal entry of the
Jacobian of \eqref{eq:bcw_model} requires \emph{one} sigmoid-derivative
evaluation, whereas the corresponding entry of the
product-of-logistics Jacobian \eqref{eq:f_prod_def} requires one
sigmoid derivative \emph{and} a product over $m_i-1$ remaining
sigmoid values; the resulting per-entry cost is $O(1)$ for the
bias-corrected weighted formulation versus $O(m_i)$ for the product.
\item Regulator-specific thresholds must be retained (in contrast
with the Samuilik formulation \eqref{eq:samuilik_def}, which collapses
them into a shared $\theta_i^{\mathrm{S}}$).
\item The dynamical regime of interest is near the critical point
(half-maximal regulatory configurations), where the bias correction
forces zeroth-order matching (exact value $1/2^{m_i}$) with the
product. The first-order discrepancy is linear in the displacement
with coefficient $(1-2^{1-m_i})\,\partial_l\fprod$ derived from the
slope-ratio formula (\Cref{prop:slope_ratio}), and the second-order
curvature mismatch is $\lambda^2(2^{m_i}-1)(2^{m_i}-2)/8^{m_i}$
(\Cref{prop:curvature_mismatch}).
\item The parameter set lies on (or near) the self-consistency
sub-manifold of \Cref{thm:shared_equilibrium}, where the two
formulations share a canonical critical-point equilibrium and the
linearised dynamics are related by the explicit spectral formula
\eqref{eq:eigenvalue_map}. In this regime the bias-corrected
weighted formulation is a strict simplification of the
product-of-logistics with no loss of equilibrium structure (modulo the
spectral re-scaling and the curvature mismatch).
\item Numerical integration efficiency is a concern. Replacing
unbounded linear-additive regulation by the bounded
bias-corrected-weighted form substantially reduces the global
Lipschitz constants of both the vector field and its Jacobian; on the
Vinoth two-gene network, Belgacem~\cite{belgacem2026beyond} reports a
$\sim 70\%$ reduction in the vector-field constant and a $\sim 65\%$
reduction in the Jacobian Lipschitz constant, permitting
correspondingly larger explicit-integrator step sizes.
\end{itemize}

\paragraph*{Samuilik weighted-sum \eqref{eq:samuilik_def}.}
The Samuilik formulation is preferable only when network sparsity
combined with real-valued continuously graded weights is essential and
the negative-midpoint repression pathology
(\Cref{subsec:repression_pathology}) is biologically tolerable. We
note that the formulation has been productively used in qualitative
network analyses where the discrepancy with the
product-of-logistics is not the analytical
focus~\cite{samuilik2022mathematical,kozlovska2023quasi}, but its
quantitative use as a substitute for the product (or for the
bias-corrected weighted formulation) is structurally limited by the
threshold and repression issues established in
\Cref{sec:samuilik_comparison}.

\subsection{Implications for control synthesis}
\label{subsec:control_implications}

The compactness of the bias-corrected weighted formulation is
particularly attractive for control-theoretic applications. The
single-sigmoid Jacobian below is a structurally simpler object to
plug into the qualitative-control framework
of~\cite{chambon2020qualitative}, in which undesired oscillations in
a genetic negative-feedback loop are suppressed by a piecewise-affine
controller with uncertain measurements, and into the
negative-feedback-loop control architecture
of~\cite{belgacem2021control}. The Jacobian of \eqref{eq:bcw_model}
at $\bx$ is
\begin{equation}
\frac{\partial\fwgtbc}{\partial x_l}(\bx)
\;=\; w_{il}\,\lambda\,\sigma'\!\bigl(\lambda S_i(\bx) -
                          \log(2^{m_i}-1)\bigr),
\label{eq:bcw_jacobian}
\end{equation}
a single self-referential expression evaluated once per gene per
state, in contrast with the product-of-logistics Jacobian
\begin{equation}
\frac{\partial\fprod}{\partial x_l}(\bx)
\;=\; w_{il}\,\lambda\,\sigma'(u_l)\!\!\prod_{j\neq l}\!\sigma(u_j),
\end{equation}
which requires $m_i$ sigmoid evaluations and a product. For
high-dimensional networks ($n\gg 100$) with $m_i\gg 1$ regulators per
gene, this difference can yield substantial computational savings in
gradient-based optimisation (e.g.\ inverse-problem parameter
estimation, gradient-flow control), provided the dynamical regime of
interest is consistent with the critical-point matching of
\Cref{thm:three_point}.

\subsection{Outlook on high-dimensional systems}
\label{subsec:high_dim}

Biological networks at the cellular scale integrate hundreds or
thousands of genes coupled through enzymatic and metabolic feedbacks.
For such large $n$, the analytical advantages of logistic functions
over Hill functions---global $C^\infty$-smoothness, closed-form
self-referential derivatives, uniformly bounded Lipschitz constants,
elementary inverses via the logit transformation---become
quantitatively important. Computer-aided high-dimensional analyses of
Glass networks~\cite{belgacem2025glass} have already exposed the
combinatorial richness of periodicity, chaos, and bifurcation
structure in such large-scale circuits, and the bias-corrected
weighted-logistic formulation provides a smooth analytic surrogate in
which the same combinatorial structure is preserved while the
analytical complexity collapses to a single sigmoid per gene.
The bias-corrected weighted formulation
extends the logistic-framework advantages to the
single-sigmoid-per-gene regime, with
two complementary roles. \textbf{First}, as a substitute for the
product-of-logistics in control-design pipelines where the
multiplicative Jacobian structure is computationally prohibitive but
near-critical-point operation is the design point. \textbf{Second}, as
a building block for systematic linearisation schemes whose
reduced-order models inherit the three-point matching at the network
level. Quantifying these advantages on realistic genome-scale networks,
and identifying regimes where bias-corrected weighted formulations
preserve qualitative dynamical features (multistability, oscillations,
extreme events) of the underlying product-of-logistics, are natural
directions for future work. Hybrid-systems extensions---in which the
sigmoid steepness is allowed to diverge, recovering Glass-network
switches at the price of Zeno-like behaviour---may benefit from the
probabilistic-convolution regularisation
of~\cite{belgacem2019zeno}.

\section{Conclusion}
\label{sec:conclusion}

We introduced a bias-corrected weighted-logistic model for gene
regulatory networks (\Cref{def:bcw_model}) that bridges the
analytical compactness of single-sigmoid weighted-sum formulations
with the biological interpretability of product-of-logistics models.
The bias term $b_i = -\lambda^{-1}\log(2^{m_i}-1)$ depends only on the
combinatorial structure of the network through the regulator count
$m_i$, and a canonicity result (\Cref{prop:bias_canonicity}) shows it
is the \emph{unique} single-sigmoid correction recovering the product's
critical-point value $1/2^{m_i}$ and sharing its canonical equilibrium
under self-consistency. The bias-corrected weighted regulatory function
coincides with the product-of-logistics at three biologically
significant reference configurations: the critical point, the
low-steepness limit, and the saturation limits
(\Cref{thm:three_point}).

The bias-corrected weighted-logistic ODE inherits the global
analytical guarantees of the product-of-logistics framework
of~\cite{belgacem2025exploring,belgacem2026logistic}: global
existence of $C^\infty$ solutions, an explicit Lipschitz constant
$L_F\le M = \max_i(\kappa_i\,\lambda m_i/4 + \gamma_i)$, a positively
invariant hyper-rectangle $\prod_i[0,\kappa_i/\gamma_i]$
(\Cref{thm:well_posedness}), and strictly-positive basal output
$\fwgtbc(\bx) > 0$ preventing the off-state trap
(\Cref{cor:no_shutdown}). These properties make the bias-corrected
weighted-logistic formulation a structurally well-posed
single-sigmoid surrogate for the product framework, free of the
Hill-induced pathologies catalogued
in~\cite{belgacem2025exploring,belgacem2026logistic}.

The discrepancy between the two formulations admits the closed-form
algebraic identity \eqref{eq:algebraic_identity}, expressing
$\fprod-\fwgtbc$ as a sum over the proper non-empty subsets of the
regulator index set (\Cref{thm:algebraic_identity}), with sharp
leading-order asymptotic decay rates in both saturation regimes
(\Cref{prop:asymptotic_rates}). The local sensitivity ratio at the
critical point is exactly $2-2^{1-m_i}$ (\Cref{prop:slope_ratio}),
monotonically approaching $2$ as $m_i\to\infty$. The structural
symmetry $\sigma''(\theta)=0$ that underlies closed-form supercritical
bifurcation theorems for the product is broken by the bias correction
for $m_i\ge 2$ (\Cref{prop:curvature_mismatch}). Three complementary
structural results characterise the bias as canonical: no
state-independent constant shift can yield global equivalence
(\Cref{prop:no_global_eq}); the bias is the unique zero-order
mean-field approximation of the log-sum-exp correction at the critical
point (\Cref{prop:mean_field}); and no affine single-sigmoid family
can simultaneously match the product's value, slope, and curvature at
the critical point (\Cref{prop:affine_limit}).

A self-consistency condition $\theta_{ij}=(\kappa_j/\gamma_j)/2^{m_j}$
on the parameters singles out a parameter sub-manifold on which the
product-of-logistics and bias-corrected weighted dynamical systems
share a canonical critical-point equilibrium with regulatory output
$1/2^{m_i}$ (\Cref{thm:shared_equilibrium}); in the contractive
regime $\gamma_i > \kappa_i\lambda m_i/4$, this shared equilibrium is
\emph{globally unique and exponentially attracting} for both systems
(\Cref{cor:global_unique}), and any threshold perturbation
$\boldsymbol\eta$ from self-consistency propagates linearly to
equilibrium deviation through a parameter-Jacobian matrix that itself
obeys the row-wise rescaling $\Psi^{\rm bcw} = D\Psi^{\rm prod}$
(\Cref{prop:robustness_eq}). At a shared equilibrium, the dynamical
Jacobians of the two systems are related by the explicit transformation
$J^{\rm bcw} = D(J^{\rm prod}+\Gamma)-\Gamma$
(\Cref{prop:linearisation}), and the bias correction satisfies a
stability dichotomy (\Cref{cor:stability_dichotomy}) in which modes
of $J^{\rm prod}$ with $\operatorname{Re}\mu^{\rm prod} > -\gamma$ are
destabilised, modes with $\operatorname{Re}\mu^{\rm prod} < -\gamma$
are further stabilised, and the threshold modes with
$\operatorname{Re}\mu^{\rm prod} = -\gamma$ are preserved exactly.

The framework is structurally distinct from the Samuilik weighted-sum
formulation \eqref{eq:samuilik_def}, which loses regulator-specific
thresholds and exhibits a negative-midpoint repression pathology
(\Cref{subsec:repression_pathology}). Within the product-of-logistics
framework, real-valued positive weights are equivalent to fixed unit
weights under parameter rescaling
(\Cref{prop:weight_rescaling}), an identifiability constraint that
does not extend to the Samuilik formulation.

A two-gene negative-feedback oscillator with self-activation
illustrates these results concretely (\Cref{sec:application}). Both
the product-of-logistics and the bias-corrected weighted formulation
share the canonical critical-point equilibrium $\bx^* =
(\kappa_A/(4\gamma),\,\kappa_B/(4\gamma))$ with two distinct biological
scales $\theta_A\ne\theta_B$. The bias-corrected formulation
undergoes Hopf bifurcation at $\lambda^{\rm bcw}_{\rm Hopf} =
32\gamma/(3(\kappa_A+\kappa_B))$, strictly smaller than the
product's threshold $\lambda^{\rm prod}_{\rm Hopf} = 16\gamma/
(\kappa_A+\kappa_B)$ by the factor $2/3$ predicted by the
eigenvalue map of \Cref{prop:linearisation}, and exhibits
sustained oscillations around $\bx^*$ in the intermediate regime
$\lambda\in(\lambda^{\rm bcw}_{\rm Hopf},\,\lambda^{\rm prod}_{\rm Hopf})$,
where the product is linearly stable (\Cref{fig:dynamics}). The
bias correction is therefore not a recalibration of equilibria but
a structural shift in the bifurcation diagram.

Future work will pursue: (i) extensions of the bias correction to
heterogeneous steepness and to weighted aggregation rules beyond
unit-magnitude; (ii) systematic linearisation and model-reduction
schemes for high-dimensional bias-corrected weighted networks
preserving stability and bifurcation structure; (iii) parameter
estimation from experimental time-series data, exploiting the
modular identifiability of regulator-specific thresholds; and
(iv) applications to control-theoretic synthesis, observer design,
and stochastic and spatiotemporal extensions of the framework.


\end{document}